\newcommand{\RMe}{\mathrm{e}}
\newcommand{\mrmd}{\,\mathrm{d}}
\newcommand{\mrmdd}{\mathrm{d}}
\newcommand{\rrvert}{\vert}
\newcommand{\llvert}{\vert}
\newtheorem{thrm}{Theorem}[section]
\newtheorem{prop}{Proposition}[section]
\newtheorem{lemma}{Lemma}[section]
\newtheorem{crll}[thrm]{Corollary}
\newcommand{\cl}{\operatorname{cl}}
\newcommand{\Id}{\operatorname{Id}}
\newcommand{\diag}{\operatorname{diag}}
\newcommand{\Prb}{\mathbf{P}}
\newcommand{\Mean}{\mathbf{E}}
\newcommand{\Law}{\operatorname{Law}}
\newcommand{\range}{\operatorname{range}}
\renewcommand{\phi}{\varphi}
\renewcommand{\epsilon}{\varepsilon}
\begin{document}
\begin{frontmatter}

\title{On the form of the large deviation rate function for the
empirical measures of weakly interacting systems}
\runtitle{Form of the rate function for weakly interacting systems}

\begin{aug}
\author{\inits{M.}\fnms{Markus} \snm{Fischer}\corref{}\ead[label=e1]{fischer@math.unipd.it}}
\address{Department of Mathematics,
University of Padua, via Trieste 63, 35121 Padova, Italy.\\
\printead{e1}}
\end{aug}

\received{\smonth{8} \syear{2012}}
\revised{\smonth{3} \syear{2013}}

%
\begin{abstract}
A basic result of large deviations theory is Sanov's theorem, which
states that the sequence of empirical measures of independent and
identically distributed samples satisfies the large deviation principle
with rate function given by relative entropy with respect to the common
distribution. Large deviation principles for the empirical measures are
also known to hold for broad classes of weakly interacting systems.
When the interaction through the empirical measure corresponds to an
absolutely continuous change of measure, the rate function can be
expressed as relative entropy of a distribution with respect to the law
of the McKean--Vlasov limit with measure-variable frozen at that
distribution. We discuss situations, beyond that of tilted
distributions, in which a large deviation principle holds with rate
function in relative entropy form.
\end{abstract}

%
\begin{keyword}
\kwd{empirical measure}
\kwd{Laplace principle}
\kwd{large deviations}
\kwd{mean field interaction}
\kwd{particle system}
\kwd{relative entropy}
\kwd{Wiener measure}
\end{keyword}

\end{frontmatter}

\section{Introduction} \label{SectIntro}

Weakly interacting systems are families of particle systems whose
components, for each fixed number $N$ of particles, are statistically
indistinguishable and interact only through the empirical measure of
the $N$-particle system. The study of weakly interacting systems
originates in statistical mechanics and kinetic theory; in this
context, they are often referred to as mean field systems.

The joint law of the random variables describing the states of the
$N$-particle system of a weakly interacting system is invariant under
permutations of components, hence determined by the distribution of the
associated empirical measure. For large classes of weakly interacting
systems, the law of large numbers is known to hold, that is, the
sequence of $N$-particle empirical measures converges to a
deterministic probability measure as $N$ tends to infinity. The limit
measure can often be characterized in terms of a limit equation, which,
by extrapolation from the important case of Markovian systems, is
called McKean--Vlasov equation (cf. McKean \cite{mckean66}). As with the
classical law of large numbers, different kinds of deviations of the
prelimit quantities (the $N$-particle empirical measures) from the
limit quantity (the McKean--Vlasov distribution) can be studied. Here we
are interested in large deviations.

Large deviations for the empirical measures of weakly interacting
systems, especially Markovian systems, have been the object of a number
of works. The large deviation principle is usually obtained by
transferring Sanov's theorem, which gives the large deviation principle
for the empirical measures of independent and identically distributed
samples, through an absolutely continuous change of measure. This
approach works when the effect of the interaction through the empirical
measure corresponds to a change of measure which is absolutely
continuous with respect to some fixed reference distribution of product
form. Sanov's theorem can then be transferred using Varadhan's lemma.
In the case of Markovian dynamics, such a change-of-measure argument
yields the large deviation principle on path space; see
L{\'e}onard \cite{leonard95} for non-degenerate jump diffusions,
Dai Pra and den Hollander \cite{daipradenhollander96} for a model of
Brownian particles in a
potential field and random environment, and Del Moral and Guionnet
\cite{delmoralguionnet98}
for a class of discrete-time Markov processes. An extension of
Varadhan's lemma tailored to the change of measure needed for empirical
measures is given in Del Moral and Zajic \cite{delmoralzajic03} and
applied to a variety
of non-degenerate weakly interacting systems. The large deviation rate
function in all those cases can be written in relative entropy form,
that is, expressed as relative entropy of a distribution with respect
to the law of the McKean--Vlasov limit with measure-variable frozen at
that distribution; cf. Remark \ref{RemFormRateFnct} below.

In the case of Markovian dynamics, the large deviation principle on
path space can be taken as the first step in deriving the large
deviation principle for the empirical processes; cf. L{\'e}onard \cite
{leonard95} or
Feng \cite{feng94,feng94b}. In Dawson and G{\"a}rtner \cite
{dawsongaertner87}, the large deviation principle for the empirical
processes of weakly interacting It{\^o} diffusions with non-degenerate
and measure-independent diffusion matrix is established in
Freidlin--Wentzell form starting from a process level representation of
the rate function for non-interacting It{\^o} diffusions. The large
deviation principle for interacting diffusions is then derived by time
discretization, local freezing of the measure variable and an
absolutely continuous change of measure with respect to the resulting
product distributions. A similar strategy is applied in Djehiche and
Kaj \cite{djehichekaj95} to a class of pure jump processes.

A different approach is taken in the early work of Tanaka \cite
{tanaka82}, where the contraction principle is employed to derive the
large deviation principle on path space for the special case of It{\^o}
diffusions with identity diffusion matrix. The contraction mapping in
this case is actually a bijection. Using the invariance of relative
entropy under bi-measurable bijections, the rate function is shown to
be of relative entropy form. In L{\'e}onard \cite{leonard95b}, the
large deviation upper bound, not the full principle, is derived by
variational methods using Laplace functionals for certain pure jump
Markov processes that do not allow for an absolutely continuous change
of measure. In Budhiraja, Dupuis and Fischer \cite{budhirajaetal12},
the path space Laplace principle for weakly interacting It{\^o}
processes with measure-dependent and possibly degenerate diffusion
matrix is established based on a variational representation of Laplace
functionals, weak convergence methods and ideas from stochastic optimal
control. The rate function is given in variational form.

The aim of this paper is to show that the large deviation principle
holds with rate function in relative entropy form also for weakly
interacting systems that do not allow for an absolutely continuous
change of measure with respect to product distributions. The large
deviation principle in that form is a natural generalization of Sanov's
theorem. Two classes of systems will be discussed: noise-based systems
to which the contraction principle is applicable, and systems described
by weakly interacting It{\^o} processes.

\begin{rem} \label{RemMeasureTopology}
The random variables representing the states of the particles will be
assumed to take values in a Polish space. The space of probability
measures over a Polish space will be equipped, for simplicity, with the
standard topology of weak convergence. Continuity of a functional with
respect to the topology of weak convergence might be a rather
restrictive condition. This restriction can be alleviated by
considering the space of probability measures that satisfy an
integrability condition (e.g., finite moments of a certain
order), equipped with the topology of weak(-star) convergence with
respect to the corresponding class of continuous functions
(for instance, Section 2b) in L{\'e}onard \cite{leonard95}). The results
presented below can
be adapted to this more general situation.
\end{rem}

The rest of this paper is organized as follows. In Section \ref
{SectBasics}, we collect basic definitions and results of the theory of
large deviations in the context of Polish spaces that will be used in
the sequel; standard references for our purposes are Dembo and Zeitouni
\cite{dembozeitouni98} and Dupuis and Ellis~\cite{dupuisellis97}. In
Section \ref{SectRightForm}, we introduce a toy model of discrete-time
weakly interacting systems to illustrate the use of Varadhan's lemma,
which in turn yields, at least formally, a representation of the rate
function in relative entropy form. In Section \ref{SectNoiseBased}, a
class of weakly interacting systems is presented to which the
contraction principle is applicable but not necessarily the usual
change-of-measure technique. The large deviation rate function is shown
to be of the desired form thanks to a contraction property of relative
entropy. In Section \ref{SectIto}, we discuss the case of weakly
interacting It{\^o} diffusions with measure-dependent and possibly
degenerate diffusion matrix studied in Budhiraja, Dupuis and Fischer
\cite{budhirajaetal12}. The variational form of the Laplace principle
rate function established there is shown to be expressible in relative
entropy form. As a by-product, one obtains a variational representation
of relative entropy with respect to Wiener measure. The \hyperref[app]{Appendix}
contains two results regarding relative entropy: the contraction
property mentioned above, which extends a well-known invariance
property (Appendix \ref{AppREContraction}), and a direct proof of the
variational representation of relative entropy with respect to Wiener
measure (Appendix \ref{AppREWiener}). In Appendix~\ref{AppItoSuff},
easily verifiable conditions entailing the hypotheses of the Laplace
principle of Section \ref{SectIto} are given.

\section{Basic definitions and results} \label{SectBasics}

Let $\mathcal{S}$ be a Polish space (i.e., a separable topological
space metrizable with a complete metric). Denote by $\mathcal
{B}(\mathcal{S})$ the $\sigma$-algebra of Borel subsets of $\mathcal
{S}$ and by
$\mathcal{P}(\mathcal{S})$ the space of probability measures on
$\mathcal{B}(\mathcal{S})$ equipped with the topology of weak
convergence. For
$\mu,\nu\in\mathcal{P}(\mathcal{S})$, let $R(\nu\| \mu)$ denote the
\emph{relative entropy} of $\nu$ with respect to $\mu$, that is,
\[
R(\nu\|\mu) \doteq\cases{\displaystyle \int_{\mathcal{S}} \log\biggl(
\frac{\mrmdd\nu}{\mrmdd\mu}(x) \biggr)\nu(\mrmdd x), &\quad if $\nu$ absolutely continuous
w.r.t. $
\mu$,
\vspace*{2pt}\cr
\infty, &\quad else.}
\]
Relative entropy is well defined as a $[0,\infty]$-valued function, it
is lower semicontinuous as a function of both variables, and $R(\nu\|
\mu) = 0$ if and only if $\nu=\mu$.

Let $(\xi^{n})_{n\in\mathbb{N}}$ be a sequence of $\mathcal
{S}$-valued random variables. A \emph{rate function} on $\mathcal{S}$
is a lower semicontinuous function $\mathcal{S} \rightarrow[0,\infty
]$. Let $I$ be a rate function on $\mathcal{S}$. By lower
semicontinuity, the sublevel sets of $I$, that is, the sets
$I^{-1}([0,c])$ for $c\in[0,\infty)$, are closed. A rate function is
said to be \emph{good} if its sublevel sets are compact.

\begin{defn}
The sequence $(\xi^{n})_{n\in\mathbb{N}}$ satisfies the \emph{large
deviation principle} with rate function $I$ if for all $B\in\mathcal
{B}(\mathcal{S})$,
\begin{eqnarray*}
-\inf_{x\in B^{\circ}} I(x) &\leq& \liminf_{n\to\infty}
\frac
{1}{n} \log\Prb\bigl\{\xi^{n} \in B \bigr\}
\\
&\leq& \limsup_{n\to\infty} \frac{1}{n} \log\Prb\bigl\{
\xi^{n} \in B \bigr\} \leq- \inf_{x\in\cl(B)} I(x),
\end{eqnarray*}
where $\cl(B)$ denotes the closure and $B^{\circ}$ the interior of $B$.
\end{defn}

\begin{defn}
The sequence $(\xi^{n})$ satisfies the \emph{Laplace principle} with
rate function $I$ if for all $G\in\mathbf{C}_{b}(\mathcal{S})$,
\[
\lim_{n\to\infty} -\frac{1}{n} \log\Mean\bigl[\exp\bigl(-n
\cdot G\bigl(\xi^{n}\bigr) \bigr) \bigr] = \inf_{x\in\mathcal{S}}
\bigl\{I(x) + G(x) \bigr\},
\]
where $\mathbf{C}_{b}(\mathcal{S})$ denotes the space of all
bounded continuous functions $\mathcal{S}\rightarrow\mathbb{R}$.
\end{defn}

Clearly, the large deviation principle (or Laplace principle) is a
distributional property. The rate function of a large deviation
principle is unique; see, for instance, Lemma 4.1.4 in
Dembo and Zeitouni \cite{dembozeitouni98}, page 117. The large
deviation principle holds
with a good rate function if and only if the Laplace principle holds
with a good rate function, and the rate function is the same; see, for
instance, Theorem 4.4.13 in Dembo and Zeitouni \cite{dembozeitouni98},
page 146.

The fact that, for good rate functions, the large deviation principle
implies the Laplace principle is a consequence of Varadhan's integral
lemma; see Theorem 3.4 in Varadhan \cite{varadhan66}. Another
consequence of
Varadhan's lemma is the first of the following two basic transfer
results, given here as Theorem \ref{ThTiltedLDP}; cf. Theorem II.7.2
in Ellis \cite{ellis85}, page 52.

\begin{thrm}[(Change of measure, Varadhan)] \label{ThTiltedLDP}
Let $(\xi^{n})$ be a sequence of $\mathcal{S}$-valued random
variables such that $(\xi^{n})$ satisfies the large deviation
principle with good rate function $I$. Let $(\tilde{\xi}^{n})_{n\in
\mathbb{N}}$ be a second sequence of $\mathcal{S}$-valued random
variables. Suppose that, for every $n\in\mathbb{N}$, $\Law(\tilde
{\xi}^{n})$ is absolutely continuous with respect to $\Law(\xi^{n})$
with density
\[
\frac{\mrmdd\Law(\tilde{\xi}^{n})}{\mrmdd\Law(\xi^{n})}(x) = \exp\bigl(n\cdot
F(x) \bigr),\qquad x\in\mathcal{S},
\]
where $F\dvtx\mathcal{S}\rightarrow\mathbb{R}$ is continuous and such that
\[
\lim_{L\to\infty} \limsup_{n\to\infty} \frac{1}{n}
\log\Mean\bigl[\mathbf{1}_{[L,\infty)}\bigl(F\bigl(\xi^{n}\bigr)
\bigr)\cdot\exp\bigl(n\cdot F\bigl(\xi^{n}\bigr) \bigr) \bigr] = -
\infty.
\]
Then $(\tilde{\xi}^{n})_{n\in\mathbb{N}}$ satisfies the large
deviation principle with good rate function $I - F$.
\end{thrm}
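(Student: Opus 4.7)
The plan is to reduce the large deviation principle for $(\tilde\xi^n)$ to the Laplace principle (using the equivalence for good rate functions recalled just before the theorem) and then apply Varadhan's lemma to the known LDP for $(\xi^n)$.

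The starting identity is the change-of-measure computation: for $G\in\mathbf{C}_b(\mathcal{S})$, absolute continuity gives
\[
	\mathbf{E}\bigl[\exp\bigl(-n\,G(\tilde\xi^n)\bigr)\bigr]
	= \mathbf{E}\bigl[\exp\bigl(n\,(F-G)(\xi^n)\bigr)\bigr].
\]
The function $F-G$ is continuous, and since $G$ is bounded the hypothesis on $F$ transfers to $F-G$ (any threshold $L$ for $F-G$ can be matched by $L+\|G\|_\infty$ for $F$, and the multiplicative factor $\exp(-nG)$ is $\leq \exp(n\|G\|_\infty)$). Hence Varadhan's lemma is applicable to the LDP for $(\xi^n)$ with the good rate function $I$, yielding
\[
	\lim_{n\to\infty}\tfrac{1}{n}\log\mathbf{E}\bigl[\exp\bigl(n(F-G)(\xi^n)\bigr)\bigr]
	= \sup_{x\in\mathcal{S}}\bigl\{F(x)-G(x)-I(x)\bigr\}.
\]
Multiplying by $-1$ and rearranging gives exactly the Laplace principle for $(\tilde\xi^n)$ with candidate rate function $I-F$.

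It remains to verify that $I-F$ is a good rate function, after which Theorem~4.4.13 in \citet{dembozeitouni98} promotes the Laplace principle to the full LDP. Nonnegativity is obtained by applying Varadhan's lemma with $G\equiv 0$: the left-hand side is $\tfrac{1}{n}\log 1=0$, so $\sup_x\{F-I\}=0$, i.e.\ $I-F\geq 0$. Lower semicontinuity is immediate, since $I$ is lower semicontinuous and $F$ is continuous.

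The main obstacle is compactness of the sublevel sets of $I-F$: since $F$ need not be bounded, one cannot simply dominate $\{I-F\leq c\}$ by a sublevel set of $I$. Here the tail hypothesis on $F$ is essential. Using the LDP and a standard Laplace-type estimate (the same ingredient that underlies Varadhan's lemma), the tail hypothesis is equivalent to
\[
	\lim_{L\to\infty}\sup\bigl\{F(x)-I(x) : F(x)\geq L\bigr\} = -\infty,
\]
that is, for every $c$ there exists $L=L(c)$ with $I(x)-F(x)>c$ whenever $F(x)\geq L$. Consequently $\{I-F\leq c\}\subset\{F<L\}$, and on this set $I\leq c+F<c+L$, so $\{I-F\leq c\}$ is a closed subset of the compact sublevel set $\{I\leq c+L\}$ and is therefore compact. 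With goodness in hand, the equivalence of Laplace and large deviation principles completes the proof.
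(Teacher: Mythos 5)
Your proof is correct. The paper itself does not prove this statement --- it cites Theorem~II.7.2 of \citet{ellis85}, whose classical proof establishes the upper and lower large deviation bounds directly on closed and open sets via Varadhan-type estimates --- whereas you route the argument through the Laplace-principle equivalence and a single application of Varadhan's lemma to $F-G$. All the steps check out: the change-of-variables identity, the transfer of the tail condition from $F$ to $F-G$ (since $\{(F-G)\geq L\}\subset\{F\geq L-\|G\|_\infty\}$ and $e^{-nG}\leq e^{n\|G\|_\infty}$), and the normalization $\mathbf{E}[e^{nF(\xi^n)}]=1$ giving $\sup\{F-I\}=0$. The one genuinely delicate point is goodness of $I-F$, and you handle it correctly: the lower-bound half of Varadhan's lemma applied to the open set $\{F>L\}$ gives $\sup_{\{F>L\}}\{F-I\}\leq\limsup_n\frac{1}{n}\log\mathbf{E}[\mathbf{1}_{[L,\infty)}(F(\xi^n))e^{nF(\xi^n)}]$, so the tail hypothesis forces $\sup\{F(x)-I(x):F(x)\geq L\}\to-\infty$ (only this implication is needed, not the claimed equivalence), and the containment $\{I-F\leq c\}\subset\{I\leq c+L\}$ then yields compactness. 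Your argument buys a self-contained proof in the Laplace formulation that the paper works with throughout, at the cost of invoking the full strength of Varadhan's lemma twice (once for the limit, once for the tail estimate) rather than proving the two bounds separately.
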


The second basic transfer result is the contraction principle, given
here as Theorem \ref{ThContraction}; see, for instance, Theorem 4.2.1
and Remark (c) in Dembo and Zeitouni \cite{dembozeitouni98},
pages 126 and 127.

\begin{thrm}[(Contraction principle)] \label{ThContraction}
Let $(\xi^{n})$ be a sequence of $\mathcal{S}$-valued random
variables such that $(\xi^{n})$ satisfies the large deviation
principle with good rate function $I$. Let $\psi\dvtx\mathcal{S}
\rightarrow\mathcal{Y}$ be a measurable function, $\mathcal{Y}$ a
Polish space. If $\psi$ is continuous on $I^{-1}([0,\infty))$, then
$(\psi(\xi^{n}))$ satisfies the large deviation principle with good
rate function
\[
J(y) \doteq\inf_{x\in\psi^{-1}(y)} I(x),\qquad y\in\mathcal{Y},
\]
where $\inf\varnothing= \infty$ by convention.
\end{thrm}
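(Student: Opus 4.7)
My plan is to verify that $J$ is a good rate function and then establish the large deviation upper and lower bounds for $(\psi(\xi^{n}))$ separately, throughout interpreting the hypothesis on $\psi$ as saying that $\psi\!:\mathcal{S}\to\mathcal{Y}$ is continuous at every point of $I^{-1}([0,\infty))$.

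To see that $J$ is a good rate function I identify, for each $c \in [0,\infty)$, the sublevel set $J^{-1}([0,c])$ with $\psi(I^{-1}([0,c]))$. The inclusion $\supseteq$ is immediate from the definition of $J$. For the reverse inclusion, given $y$ with $J(y)\le c$, a minimizing sequence $x_{n} \in \psi^{-1}(y)$ with $I(x_{n}) \to J(y)$ eventually enters the compact set $I^{-1}([0,c+1])$, and extracting a convergent subsequence $x_{n}\to x_{\star}$ and combining lower semicontinuity of $I$ with continuity of $\psi$ at $x_{\star}$ produces $x_{\star} \in \psi^{-1}(y)\cap I^{-1}([0,c])$. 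Thus $J^{-1}([0,c])$ is the image of a compact set under a function continuous at each of its points, hence compact.

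For the upper bound, let $C \subseteq \mathcal{Y}$ be closed and pick any $\alpha \in [0,\inf_{y\in C} J(y))$ (the case $\inf_{C} J = 0$ being trivial). The set $K := I^{-1}([0,\alpha])$ is compact, and $\psi(K) \cap C = \emptyset$: if $\psi(x) \in C$ for some $x \in K$, then $J(\psi(x)) \le I(x) \le \alpha$, contradicting the choice of $\alpha$. For each $x \in K$, continuity of $\psi$ at $x$ and openness of $\mathcal{Y}\setminus C$ yield an open neighborhood $U_{x}$ of $x$ with $\psi(U_{x}) \subseteq \mathcal{Y}\setminus C$; extracting a finite subcover of $K$ and taking its union $U$, I obtain an open set with $K \subseteq U$ and $\psi^{-1}(C) \subseteq \mathcal{S}\setminus U$. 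The set $\mathcal{S}\setminus U$ is closed and every point of it satisfies $I > \alpha$, so the LDP upper bound for $(\xi^{n})$ gives
\[
\limsup_{n\to\infty} \frac{1}{n} \log \Prb\{\psi(\xi^{n}) \in C\} \leq \limsup_{n\to\infty} \frac{1}{n} \log \Prb\{\xi^{n} \in \mathcal{S}\setminus U\} \leq -\alpha,
\]
and letting $\alpha \uparrow \inf_{C} J$ completes the bound. For the lower bound, given an open $O \subseteq \mathcal{Y}$ and $y \in O$ with $J(y) < \infty$, the compactness argument above furnishes an $x \in \psi^{-1}(y)$ with $I(x) = J(y)$; continuity of $\psi$ at $x$ and openness of $O$ yield an open $U \ni x$ with $\psi(U) \subseteq O$, and the LDP lower bound for $(\xi^{n})$ applied to $U$ gives $\liminf_{n} \frac{1}{n}\log \Prb\{\psi(\xi^{n}) \in O\} \ge -I(x) = -J(y)$, from which the full bound follows by taking the infimum over $y$.

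The main obstacle is the upper bound, because $\psi$ need not be continuous outside $I^{-1}([0,\infty))$: the preimage $\psi^{-1}(C)$ can fail to be closed in $\mathcal{S}$, so the LDP for $(\xi^{n})$ cannot be applied to it directly. The compactness argument producing the open enlargement $U$ of $K$ is exactly what sidesteps this, replacing $\psi^{-1}(C)$ with the honestly closed set $\mathcal{S}\setminus U$ at the price of weakening the decay rate from $\inf_{C} J$ to $\alpha$, a price that vanishes in the limit $\alpha \uparrow \inf_{C} J$.
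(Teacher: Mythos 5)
Your proof is correct. The paper does not prove this theorem itself—it is quoted from \citet{dembozeitouni98} (Theorem~4.2.1 together with Remark~(c))—and your argument is essentially the standard one for that extension: goodness of $I$ gives attainment of the infimum defining $J$ and compactness of its sublevel sets, and the key step, thickening the compact set $I^{-1}([0,\alpha])$ to an open $U$ disjoint from $\psi^{-1}(C)$ so that the upper bound can be applied to the genuinely closed set $\mathcal{S}\setminus U$, is exactly how one compensates for $\psi$ being continuous only at points of $I^{-1}([0,\infty))$.
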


Let $X_{1},X_{2},\ldots$ be $\mathcal{S}$-valued independent and
identically distributed random variables with common distribution $\mu
\in\mathcal{P}(\mathcal{S})$ defined on some probability space
$(\Omega,\mathcal{F},\Prb)$. For $n\in\mathbb{N}$, let $\mu^{n}$ be the
\emph{empirical measure} of $X_{1},\ldots,X_{n}$, that is,
\[
\mu^{n}(\omega) \doteq\frac{1}{n} \sum
_{i=1}^{n} \delta_{X_{i}(\omega)}, \qquad\omega\in\Omega,
\]
where $\delta_{x}$ denotes the Dirac measure concentrated in $x \in
\mathcal{S}$. Sanov's theorem gives the large deviation principle for
$(\mu^{n})_{n\in\mathbb{N}}$ in terms of relative entropy. For a
proof, see, for instance, Section~6.2 in
Dembo and Zeitouni \cite{dembozeitouni98}, pages 260--266, or
Chapter 2 in Dupuis and Ellis \cite{dupuisellis97}, pages 39--52.
Recall that $\mathcal{P}(\mathcal{S})$ is equipped with the topology of
weak convergence of measures.

\begin{thrm}[(Sanov)] \label{ThSanov}
The sequence $(\mu^{n})_{n\in\mathbb{N}}$ of $\mathcal{P}(\mathcal
{S})$-valued random variables satisfies the large deviation principle
with good rate function
\[
I(\theta) \doteq R ( \theta\| \mu),\qquad \theta\in\mathcal{P}(\mathcal{S}).
\]
\end{thrm}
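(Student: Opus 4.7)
I would prove the upper and lower bounds of the large deviation principle separately, combined with a standard verification of goodness. Goodness of $I \doteq R(\cdot\|\mu)$ is immediate from lower semicontinuity of $R$ together with tightness of its sublevel sets, which follows by combining tightness of $\{\mu\}$ on the Polish space $\mathcal{S}$ with the classical estimate $\theta(K^{c}) \leq (R(\theta\|\mu) + \log 2)/\log(1 + 1/\mu(K^{c}))$.

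\emph{Upper bound.} The starting point is the Donsker--Varadhan variational representation
\[
R(\theta\|\mu) \;=\; \sup_{\phi \in \mathbf{C}_b(\mathcal{S})}\left\{\int \phi\,d\theta \;-\; \log\!\int e^{\phi}\,d\mu\right\}.
\]
For each fixed $\phi \in \mathbf{C}_b(\mathcal{S})$ and each closed $C \subset \prbms{\mathcal{S}}$, the elementary inequality $\boldsymbol{1}_{\{\mu^n \in C\}} \leq \exp(n\!\int \phi\,d\mu^n - n\inf_{\theta \in C}\!\int \phi\,d\theta)$ combined with independence of $X_{1},\dots,X_{n}$ yields
\[
\tfrac{1}{n}\log\Prb\{\mu^n \in C\} \;\leq\; -\inf_{\theta \in C}\!\int \phi\,d\theta \;+\; \log\!\int e^{\phi}\,d\mu.
\]
Optimising over $\phi$ and exchanging $\sup_{\phi}$ with $\inf_{\theta \in C}$---admissible on compact $C$ by a standard minimax argument and continuity of $\theta \mapsto \int \phi\,d\theta$---produces the upper bound on compact sets. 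To pass to arbitrary closed sets I would establish exponential tightness: for every $M > 0$, tightness of $\mu$ supplies a compact $K_{M} \subset \mathcal{S}$ with $\mu(K_{M}^{c}) \leq e^{-2M}$, and the Chernoff-type estimate $\Mean[e^{Mn\mu^n(K_{M}^{c})}] = (1 + (e^{M}-1)\mu(K_{M}^{c}))^{n}$ forces $\Prb\{\mu^n(K_{M}^{c}) > \epsilon\}$ to decay super-exponentially; Prohorov's theorem then produces the required exponentially tight compacta in $\prbms{\mathcal{S}}$.

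\emph{Lower bound.} Fix an open $O \subset \prbms{\mathcal{S}}$ and $\nu \in O$ with $R(\nu\|\mu) < \infty$. Let $Y_{1}, Y_{2}, \dots$ be independent and identically distributed with common law $\nu$, and let $\nu^n$ be their empirical measure; the classical law of large numbers on $\prbms{\mathcal{S}}$ gives $\nu^n \to \nu$ in probability, so $A_{n} \doteq \{\nu^n \in O\}$ satisfies $\Prb(A_{n}) \to 1$. Absolute continuity $\nu \ll \mu$ furnishes the change-of-measure identity
\[
\Prb\{\mu^n \in O\} \;=\; \Mean\!\left[\boldsymbol{1}_{A_{n}}\exp\!\left(-n\!\int \log\tfrac{d\nu}{d\mu}\,d\nu^n\right)\right],
\]
and Jensen's inequality applied conditionally on $A_{n}$ produces
\[
\tfrac{1}{n}\log\Prb\{\mu^n \in O\} \;\geq\; \tfrac{1}{n}\log\Prb(A_{n}) \;-\; \tfrac{1}{\Prb(A_{n})}\Mean\!\left[\boldsymbol{1}_{A_{n}}\!\int \log\tfrac{d\nu}{d\mu}\,d\nu^n\right].
\]
Since the negative part of $\log(d\nu/d\mu)(Y_{1})$ has $\nu$-expectation at most $1/e$ (using $-t\log t \leq 1/e$ on $[0,1]$), the strong law together with dominated convergence gives $\int \log(d\nu/d\mu)\,d\nu^n \to R(\nu\|\mu)$ in $L^{1}$. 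Hence the right-hand side tends to $-R(\nu\|\mu)$; taking the infimum over admissible $\nu \in O$ completes the lower bound.

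\emph{Main obstacle.} The principal delicate step is the upper bound on non-compact closed sets: converting the pointwise-in-$\phi$ Chernoff estimate into a bound involving $\inf_{\theta \in C} R(\theta\|\mu)$ is straightforward on compact $C$ via a minimax exchange, but handling arbitrary closed $C$ requires first reducing to compact subsets via the exponential tightness argument above. Once the change-of-measure identity is in hand, the lower bound is by comparison routine.
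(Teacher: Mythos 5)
The paper does not prove this statement: it is Sanov's theorem, quoted with a pointer to standard references (Section~6.2 of Dembo--Zeitouni and Chapter~2 of Dupuis--Ellis), so there is no in-paper argument to compare against. Your proposal is a correct and essentially complete sketch of the classical proof: goodness via the test-function bound $\theta(K^{c}) \leq (R(\theta\|\mu)+\log 2)/\log(1+1/\mu(K^{c}))$ together with Prohorov, the upper bound via the Chernoff/Donsker--Varadhan estimate on compact sets plus exponential tightness, and the lower bound via tilting to $\nu$ and the law of large numbers for $\nu^{n}$. This is closer in spirit to the Dembo--Zeitouni route than to the weak-convergence/Laplace-functional route of Dupuis--Ellis, and all the estimates you quote are correct (in the lower bound the change-of-measure identity is really only an inequality $\geq$ unless $d\nu/d\mu>0$ $\mu$-a.s., but that is all you need). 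The one step you should phrase more carefully is the ``standard minimax argument'' on compact $C$: a closed subset of $\prbms{\mathcal{S}}$ need not be convex, so Sion-type minimax theorems do not apply directly; the standard justification is instead a finite covering argument --- for each $\theta\in C$ pick $\phi_{\theta}$ nearly attaining the supremum in the Donsker--Varadhan formula, use continuity of $\theta'\mapsto\int\phi_{\theta}\,d\theta'$ to get a neighbourhood on which the Chernoff bound is within $\delta$ of $-R(\theta\|\mu)$, cover $C$ by finitely many such neighbourhoods, and apply the union-of-events bound. With that substitution the argument is airtight.
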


We are interested in analogous results for the empirical measures of
weakly interacting systems. For $N\in\mathbb{N}$, let
$X^{N}_{1},\ldots,X^{N}_{N}$ be $\mathcal{S}$-valued random variables
defined on some probability space
$(\Omega_{N},\mathcal{F}_{N},\Prb_{N})$. Denote by $\mu^{N}$ the
empirical measure of $X^{N}_{1},\ldots,X^{N}_{N}$.
%
\begin{defn} \label{DefMeanField}
The triangular array $(X^{N}_{i})_{N\in\mathbb{N},i\in\{1,\ldots,N\}
}$ is called a \emph{weakly interacting system} if the following hold:
\begin{longlist}
\item for each $N\in\mathbb{N}$, $X^{N}_{1},\ldots,X^{N}_{N}$ is a
finite exchangeable sequence;
\item the family $(\mu^{N})_{N\in\mathbb{N}}$ of $\mathcal
{P}(\mathcal{S})$-valued random variables is tight.
\end{longlist}
\end{defn}

Recall that a finite sequence $Y_{1},\ldots,Y_{N}$ of random variables
with values in a common measurable space is called \emph{exchangeable}
if its joint distribution is invariant under permutations of the
components, that is, $\Law(Y_{1},\ldots,Y_{N}) = \Law(Y_{\sigma
(1)},\ldots,Y_{\sigma(N)})$ for every permutation $\sigma$ of $\{
1,\ldots,N\}$. A weakly interacting system $(X^{N}_{i})$ is said to
satisfy the \emph{law of large numbers} if there exists $\mu\in
\mathcal{P}(\mathcal{S})$ such that $(\mu^{N})$ converges to $\mu$ in
distribution or, equivalently, $(\Prb_{N}\circ(\mu^{N})^{-1})$
converges weakly to $\delta_{\mu}$. Weakly interacting systems are
sometimes called \emph{mean field systems}. In the situation of
Theorem \ref{ThSanov}, setting $X^{N}_{i}\doteq X_{i}$, $N\in\mathbb
{N}$, $i\in\{1,\ldots,N\}$, defines a weakly interacting system that
satisfies the law of large numbers, the limit measure being the common
sample distribution.

\section{A toy model and the desired form of the rate function}
\label{SectRightForm}

For $N\in\mathbb{N}$, let $(Y^{N}_{i}(t))_{i\in\{1,\ldots,N\},t\in
\{0,1\}}$ be an independent family of standard normal real random
variables on some probability space $(\Omega,\mathcal{F},\Prb)$. Let
$b\dvtx\mathbb{R} \rightarrow\mathbb{R}$ be measurable; below we will
assume $b$ to be bounded and continuous. Define real random variables
$X^{N}_{1}(t),\ldots,X^{N}_{N}(t)$, $t\in\{0,1\}$, by
%
\begin{equation}
\label{ExToySystem} X^{N}_{i}(0) \doteq
Y^{N}_{i}(0),\qquad X^{N}_{i}(1) \doteq
X^{N}_{i}(0) + \frac{1}{N}\sum
_{j=1}^{N} b \bigl(X^{N}_{j}(0)
\bigr) + Y^{N}_{i}(1).
\end{equation}
We may interpret the variables $X^{N}_{i}(t)$ as the states of the
components of an $N$-particle system at times $t\in\{0,1\}$. This toy
model can be obtained as the first two steps in a discrete time version
of a system of weakly interacting It{\^o} diffusions; cf. the
discussion following Example \ref{ExmplDiscreteTimeModel} below. Let
$\mu^{N}$ be the empirical measure of the $N$-particle system on
``path space,'' that is,
\[
\mu^{N}_{\omega} \doteq\frac{1}{N} \sum
_{i=1}^{N} \delta_{X^{N}_{i}(\omega)} = \frac{1}{N}
\sum_{i=1}^{N}
\delta_{(X^{N}_{i}(0,\omega),X^{N}_{i}(1,\omega))},\qquad
\omega
\in\Omega.
\]
Notice that the components of $X^{N}$ are identically distributed and
interact only through $\mu^{N}$ since
\[
\frac{1}{N}\sum_{j=1}^{N} b
\bigl(X^{N}_{j}(0) \bigr) = \int_{\mathbb{R}^{2}}
b(x) \mrmd \mu^{N}(x,\tilde{x})
\]
and the variables $Y^{N}_{i}(t)$ are independent and identically
distributed. The sequence $X^{N}_{1},\ldots,X^{N}_{N}$ of $\mathbb
{R}^{2}$-valued random variables is exchangeable.

Let $\lambda^{N}$ denote the empirical measure of $
 Y^{N}_{1},\ldots,Y^{N}_{N} $. By Sanov's theorem, $(\lambda
^{N})_{N\in\mathbb{N}}$ satisfies the large deviation principle with
good rate function $R(\cdot\|\gamma_{0})$, where $\gamma_{0}$ is the
bivariate standard normal distribution. Following the usual way of
deriving the large deviation principle, we observe that, for every
$N\in\mathbb{N}$, the law of $\mu^{N}$ is absolutely continuous with
respect to the law of $\lambda^{N}$. To see this, set, for
$\mathbf{y},\tilde{\mathbf{y}}\in\mathbb{R}^{N}$, $\theta
\in\mathcal{P}(\mathbb{R}^{2})$,
\begin{eqnarray*}
\nu^{N}_{(\mathbf{y},\tilde{\mathbf{y}})} &\doteq& \frac
{1}{N}\sum
_{i=1}^{N} \delta_{(y_{i},\tilde{y}_{i})},\qquad m_{b}(
\theta)\doteq\int b(x) \mrmd \theta(x,\tilde{x}),
\\[-1.5pt]
\nu^{N}_{\mathbf{y}} &\doteq& \frac{1}{N}\sum
_{i=1}^{N} \delta_{y_{i}},\qquad
\mathbf{m}_{b}(\mathbf{y})\doteq{ \biggl(\int b(x)
\nu^{N}_{\mathbf{y}}(\mrmdd x),\ldots,\int b(x) \nu^{N}_{\mathbf{y}}(\mrmdd x)
\biggr)}^\mathsf{T}.
\end{eqnarray*}
Define functions $f\dvtx\mathcal{P}(\mathbb{R}^{2})\times\mathbb{R}
\rightarrow\mathbb{R}$ and $F\dvtx\mathcal{P}(\mathbb
{R}^{2})\rightarrow
[-\infty,\infty)$ according to
\begin{eqnarray*}
f\bigl(\theta,(y,\tilde{y})\bigr) &\doteq& \bigl(y + m_{b}(\theta)
\bigr)\cdot\tilde{y} - \frac{1}{2} \bigl\llvert y + m_{b}(\theta)
\bigr\rrvert^{2},
\\[-1.5pt]
F(\theta) &\doteq& \cases{\displaystyle \int_{\mathbb{R}^{2}} f\bigl(\theta,(y,\tilde
{y})\bigr) \mrmd \theta(y,\tilde{y}), &\quad if $f(\theta,\cdot)$ is $\theta$-integrable,
\vspace*{1pt}\cr
-\infty, &\quad otherwise.}
\end{eqnarray*}
Then the law of $X^{N}$ is absolutely continuous with respect to the
law of $Y^{N}$ with density given by
%
\begin{eqnarray}
\label{EqToyProcessDensity} \frac{\mrmdd\Law(X^{N})}{\mrmdd\Law
(Y^{N})}(\mathbf{y},\tilde{\mathbf{y}}) &=& \exp\biggl( \bigl\langle\mathbf{y} + \mathbf{m}_{b}(
\mathbf{y}), \tilde{\mathbf{y}}\bigr\rangle-\frac{1}{2} \bigl|
\mathbf{y} + \mathbf{m}_{b}(\mathbf{y})\bigr|^{2}
\biggr)
\\
&=& \exp\bigl(N\cdot F \bigl(\mu^{N}_{(\mathbf{y},\tilde{\mathbf{y}})} \bigr) \bigr).
\nonumber
\end{eqnarray}
Since $\mu^{N} = \nu^{N}_{(X^{N}(0),X^{N}(1))}$ and $\lambda^{N} =
\nu^{N}_{(Y^{N}(0),Y^{N}(1))}$, it follows from
(\ref{EqToyProcessDensity}) that
%
\begin{equation}
\label{EqToyDensity} \frac{\mrmdd\Law(\mu^{N})}{\mrmdd\Law(\lambda^{N})}(\theta)
= \exp\bigl( N\cdot F(\theta)
\bigr),\qquad \theta\in\mathcal{P}\bigl(\mathbb{R}^{2}\bigr).
\end{equation}

The densities given by (\ref{EqToyDensity}) are of the form
required by Theorem \ref{ThTiltedLDP}, the change of measure version
of Varadhan's lemma. Assume from now on that $b$ is bounded and
continuous. Then $F$ is upper semicontinuous and the tail condition in
Theorem \ref{ThTiltedLDP} is satisfied. However, $F$ is discontinuous
at any $\theta\in\mathcal{P}(\mathbb{R}^{2})$ such that $F(\theta) >
-\infty$. Indeed, let $\eta$ be the univariate standard Cauchy
distribution and set $\theta_{n}\doteq(1-\frac{1}{n})\theta+ \frac
{1}{n}\delta_{0}\otimes\eta$, $n\in\mathbb{N}$. Then $\theta_{n}
\to\theta$ weakly, while $F(\theta_{n}) = -\infty$ for all $n$.

Although Theorem \ref{ThTiltedLDP} cannot be applied directly, an
approximation argument based on Varadhan's lemma could be used to show
(cf. Remark \ref{RemToyTiltedLDP} below) that the sequence of
empirical measures $(\mu^{N})_{N\in\mathbb{N}}$ satisfies the large
deviation principle with good rate function
%
\begin{equation}
\label{EqToyRateFnctAux} I(\theta) \doteq R(\theta\|\gamma_{0}) - F(
\theta),\qquad \theta\in\mathcal{P}\bigl(\mathbb{R}^{2}\bigr).
\end{equation}
The function $I$ in (\ref{EqToyRateFnctAux}) can be rewritten in terms
of relative entropy as follows. Define a mapping
$\mathbb{R}\dvtx\mathcal
{P}(R^{2})\times\mathbb{R}^{2} \rightarrow\mathbb{R}^{2}$ by
%
\begin{equation}
\label{ExToyModelpsi} \psi\bigl(\theta,(y,\tilde{y})\bigr) \doteq\bigl
(y, y +
m_{b}(\theta) + \tilde{y}\bigr).
\end{equation}
For $\theta\in\mathcal{P}(\mathbb{R}^{2})$, let $\Psi_{\gamma
_{0}}(\theta)$ be the image measure of $\gamma_{0}$ under $\psi
(\theta,\cdot)$. Then $\Psi_{\gamma_{0}}(\theta)$ is equivalent
to $\gamma_{0}$ with density given by
\[
\frac{\mrmdd \Psi_{\gamma_{0}}(\theta)}{\mrmdd \gamma_{0}}(y,\tilde{y}) = \exp\bigl
(f\bigl(\theta,(y,\tilde{y})\bigr)
\bigr).
\]
If $\theta$ is not absolutely continuous with respect to $\Psi
_{\gamma_{0}}(\theta)$, then $R(\theta\| \Psi_{\gamma_{0}}(\theta
)) = \infty= R(\theta\| \gamma_{0})$. If $\theta$ is absolutely
continuous with respect to $\Psi_{\gamma_{0}}(\theta)$, then
\begin{eqnarray*}
R \bigl(\theta\| \Psi_{\gamma_{0}}(\theta) \bigr) &=& \int\log\biggl(
\frac{\mrmdd \theta}{\mrmdd\Psi_{\gamma_{0}}(\theta)} \biggr) \mrmd \theta
\\
&=& \int\log\biggl(\frac{\mrmdd\theta}{\gamma_{0}} \biggr) \mrmd \theta- \int\log
\biggl(
\frac{\mrmdd \Psi_{\gamma_{0}}(\gamma_{0})}{\mrmdd \gamma
_{0}} \biggr) \mrmd \theta
\\
&=& R (\theta\| \gamma_{0} ) - F(\theta).
\end{eqnarray*}
Consequently, for all $\theta\in\mathcal{P}(\mathbb{R}^{2})$,
%
\begin{equation}
\label{EqToyRateFnct} I(\theta) = R \bigl(\theta\| \Psi_{\gamma
_{0}}(\theta)
\bigr).
\end{equation}
Notice that $\Psi_{\gamma_{0}}(\theta)$ is the law of a one-particle
system with measure variable frozen at $\theta$; $\Psi_{\gamma
_{0}}(\theta)$ can also be interpreted as the solution of the
McKean--Vlasov equation for the toy model with measure variable frozen
at $\theta$.

\begin{rem} \label{RemToyTiltedLDP}
A version of Varadhan's lemma (or Theorem \ref{ThTiltedLDP}) that
allows to rigorously derive the large deviation principle for $(\mu
^{N})$ with rate function in relative entropy form is provided by
Lemma 1.1 in Del Moral and Zajic \cite{delmoralzajic03}. Observe that
the density of $\Law(X^{N})$ may be computed with respect to product
measures different from $\Law(Y^{N})=\otimes^{N}\gamma_{0}$. A
natural alternative is the product $\otimes^{N}\Psi_{\gamma_{0}}(\mu
_{\ast})$, where $\mu_{\ast}$ is the (unique) solution of the fixed
point equation $\mu= \Psi_{\gamma_{0}}(\mu)$; $\mu_{\ast}$ can be
seen as the McKean--Vlasov distribution of the toy model. We do not give
the details here. The results of Section \ref{SectNoiseBased}, based
on different arguments, will imply that $(\mu^{N})_{N\in\mathbb{N}}$
satisfies the large deviation principle with good rate function $I$ as
given by (\ref{EqToyRateFnct}); see Example \ref
{ExmplToyModel} below.
\end{rem}

\begin{rem} \label{RemFormRateFnct}
Equation (\ref{EqToyRateFnct}) gives the desired form of the rate
function in terms of relative entropy. More generally, suppose that
$\Psi\dvtx\mathcal{P}(\mathcal{S}) \rightarrow\mathcal{P}(\mathcal
{S})$ is
continuous, where $\mathcal{S}$ is a Polish space. Then the function
\[
J(\theta)\doteq R \bigl(\theta\| \Psi(\theta) \bigr),\qquad \theta\in\mathcal{P}(
\mathcal{S}),
\]
is lower semicontinuous with values in $[0,\infty]$, hence a rate
function, and it is in relative entropy form. The lower semicontinuity
of $J$ follows from the lower semicontinuity of relative entropy
jointly in both its arguments and the continuity of $\Psi$. If, in
addition, $\range(\Psi) \doteq\{\Psi(\theta)\dvt\theta\in\mathcal
{P}(\mathcal{S})\}$ is compact in $\mathcal{P}(\mathcal{S})$, then the
sublevel sets of $J$ are compact and $J$ is a good rate function.
Indeed, compactness of $\range(\Psi)$ implies tightness, and the
compactness of the sublevel sets of $J$, which are closed by lower
semicontinuity, follows as in the proof of Lemma 1.4.3(c) in
Dupuis and Ellis \cite{dupuisellis97}, pages 29--31.
\end{rem}

\section{Noise-based systems} \label{SectNoiseBased}

Let $\mathcal{X}$, $\mathcal{Y}$ be Polish spaces. For $N\in\mathbb
{N}$, let $X^{N}_{1},\ldots,X^{N}_{N}$ be $\mathcal{X}$-valued random
variables defined on some probability space $(\Omega_{N},\mathcal
{F}_{N},\Prb_{N})$. Denote by $\mu^{N}$ the empirical measure of
$X^{N}_{1},\ldots,X^{N}_{N}$. We suppose that there are a probability
measure $\gamma_{0}\in\mathcal{P}(\mathcal{Y})$ and a Borel measurable
mapping $\psi\dvtx\mathcal{P}(\mathcal{X})\times\mathcal{Y} \rightarrow
\mathcal{X}$ such that the following representation for the triangular
array $(X^{N}_{i})_{i\in\{1,\ldots,N\},N\in\mathbb{N}}$ holds: for each
$N\in\mathbb{N}$, there is a sequence $Y^{N}_{1},\ldots,Y^{N}_{N}$ of
independent and identically distributed $\mathcal {Y}$-valued random
variables on $(\Omega_{N},\mathcal{F}_{N},\Prb _{N})$ with common
distribution $\gamma_{0}$ such that for all $i\in\{ 1,\ldots,N\}$,
%
\begin{equation}
\label{EqNoiseRep} X^{N}_{i}(\omega) = \psi\bigl(
\mu^{N}_{\omega},Y^{N}_{i}(\omega) \bigr),\qquad
\Prb_{N}\mbox{-almost all }\omega\in\Omega_{N}.
\end{equation}
The above representation entails by symmetry that, for $N$ fixed, the
sequence $X_{1}^{N},\ldots,X^{N}_{N}$ is exchangeable. Representation
(\ref{EqNoiseRep}) also implies that $\mu^{N}$ satisfies the equation
%
\begin{equation}
\label{EqPrelimitFP} \mu^{N} = \frac{1}{N} \sum
_{i=1}^{N} \delta_{\psi(\mu
^{N},Y^{N}_{i})},\qquad \Prb_{N}
\mbox{-almost surely.}
\end{equation}

In order to describe the limit behavior of the sequence of empirical
measures $(\mu^{N})_{N\in\mathbb{N}}$, define a mapping $\Psi\dvtx
\mathcal{P}(\mathcal{Y})\times\mathcal{P}(\mathcal{X}) \rightarrow
\mathcal{P}(\mathcal{X})$ by
%
\begin{equation}
\label{ExLimitModel} (\gamma,\mu) \mapsto\Psi_{\gamma}(\mu) \doteq\gamma
\circ\psi^{-1}(\mu,\cdot).
\end{equation}
Thus $\Psi_{\gamma}(\mu)$ is the image measure of $\gamma$ under
the mapping $\mathcal{Y} \ni y \mapsto\psi(\mu,y)$. Equivalently,
$\Psi_{\gamma}(\mu) = \Law(\psi(\mu,Y))$ with $Y$ any $\mathcal
{Y}$-valued random variable with distribution $\gamma$. Limit points
of $(\mu^{N})_{N\in\mathbb{N}}$ will be described in terms of
solutions to the fixed point equation
%
\begin{equation}
\label{EqLimitFP} \mu= \Psi_{\gamma}(\mu).
\end{equation}

Assume that there is a Borel measurable set $\mathcal{D} \subset
\mathcal{P}(\mathcal{Y})$ such that the following properties hold:
\begin{longlist}
\item[(A1)] 
Equation (\ref{EqLimitFP}) has a unique
fixed point $\mu_{\ast}(\gamma)$ for every $\gamma\in\mathcal
{D}$, and
the mapping $\mathcal{D}\ni\gamma\mapsto\mu_{\ast}(\gamma) \in
\mathcal{P}(\mathcal{X})$ is Borel measurable.

\item[(A2)] 
For all $N\in\mathbb{N}$,
\[
\otimes^{N}\gamma_{0} \Biggl\{(y_{1},\ldots,y_{N})\in\mathcal{Y}^{N}\dvt\frac{1}{N}\sum
_{i=1}^{N}\delta_{y_{i}} \in
\mathcal{D} \Biggr\} = 1.
\]
\item[(A3)] 
If $\gamma\in\mathcal{P}(\mathcal{Y})$ is
such that $R(\gamma\|\gamma_{0}) < \infty$, then $\gamma\in
\mathcal{D}$ and $\mu_{\ast|\mathcal{D}}$ is continuous at $\gamma
$.
\end{longlist}

Assumption (A2) implies that (\ref{EqLimitFP}) possesses a
unique solution for almost all (with respect to products of
$\gamma_{0}$) probability measures of empirical measure form. Such
probability measures are therefore in the domain of definition of the
mapping $\mu_{\ast|\mathcal{D}}$. According to
assumption (A3), also all probability measures $\gamma$
with finite $\gamma_{0}$-relative entropy are in the domain of
definition of $\mu_{\ast|\mathcal{D}}$, which is continuous at any such
$\gamma$ in the topology of weak convergence.
%
\begin{thrm} \label{ThNoiseLDP}
Grant \textup{(A1)--(A3)}. Then the sequence $(\mu^{N})_{N\in\mathbb
{N}}$ satisfies the large deviation principle with good rate function
$I\dvtx\mathcal{P}(\mathcal{X}) \rightarrow[0,\infty]$ given by
\[
I(\eta) = \inf_{\gamma\in\mathcal{D}:\mu_{\ast}(\gamma)=\eta} R (\gamma
\| \gamma_{0} ),
\]
where $\inf\varnothing= \infty$ by convention.
\end{thrm}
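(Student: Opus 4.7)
The plan is to reduce the theorem to an application of the contraction principle (Theorem~\ref{ThContraction}) with Sanov's theorem (Theorem~\ref{ThSanov}) playing the role of the input large deviation principle. Let $\lambda^{N} \doteq \frac{1}{N}\sum_{i=1}^{N}\delta_{Y^{N}_{i}}$. Since the $Y^{N}_{i}$ are i.i.d.\ with common law $\gamma_{0}$, Sanov gives that $(\lambda^{N})_{N\in\mathbb{N}}$ satisfies the LDP on $\prbms{\mathcal{Y}}$ with good rate function $R(\,\cdot\,\|\gamma_{0})$. The desired rate function in the statement suggests that the correct contraction is $\mu_{\ast}\!: \mathcal{D} \to \prbms{\mathcal{X}}$, provided we can identify $\mu^{N}$ with $\mu_{\ast}(\lambda^{N})$ almost surely.

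The first step is exactly this identification. By \hypref{HypPrelimit}, $\Prb_{N}\{\lambda^{N} \in \mathcal{D}\} = 1$, so by \hypref{HypUnique} the element $\mu_{\ast}(\lambda^{N})$ is defined as the unique fixed point of $\mu \mapsto \Psi_{\lambda^{N}}(\mu)$ almost surely. Using the noise representation \eqref{EqNoiseRep} and $\lambda^{N} = \frac{1}{N}\sum_{i=1}^{N}\delta_{Y^{N}_{i}}$ in the definition \eqref{ExLimitModel} of $\Psi$, the prelimit fixed point equation \eqref{EqPrelimitFP} reads $\mu^{N} = \Psi_{\lambda^{N}}(\mu^{N})$ almost surely. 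Uniqueness then forces $\mu^{N} = \mu_{\ast}(\lambda^{N})$ $\Prb_{N}$-almost surely; in particular, the two random variables have the same distribution, so any LDP we establish for $(\mu_{\ast}(\lambda^{N}))_{N\in\mathbb{N}}$ transfers to $(\mu^{N})_{N\in\mathbb{N}}$.

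Next I would extend $\mu_{\ast}$ from $\mathcal{D}$ to a Borel measurable map $\tilde{\mu}_{\ast}\!: \prbms{\mathcal{Y}} \to \prbms{\mathcal{X}}$, for instance by fixing an arbitrary value $\eta_{0} \in \prbms{\mathcal{X}}$ and setting $\tilde{\mu}_{\ast}(\gamma) \doteq \eta_{0}$ for $\gamma \notin \mathcal{D}$; this is Borel measurable because $\mathcal{D}$ is a Borel set and $\mu_{\ast}$ is Borel measurable on $\mathcal{D}$ by \hypref{HypUnique}. The effective domain $\{R(\,\cdot\,\|\gamma_{0}) < \infty\}$ of the Sanov rate function is contained in $\mathcal{D}$ by \hypref{HypLimit}, and on that set $\tilde{\mu}_{\ast} = \mu_{\ast}$ is continuous by the same assumption. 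In particular, $\tilde{\mu}_{\ast}$ is continuous on every sublevel set of $R(\,\cdot\,\|\gamma_{0})$, which is the only continuity required by Theorem~\ref{ThContraction}. Applying the contraction principle yields that $(\tilde{\mu}_{\ast}(\lambda^{N}))_{N\in\mathbb{N}}$ satisfies the LDP with good rate function
\[
    \tilde{I}(\eta) \doteq \inf_{\gamma \in \tilde{\mu}_{\ast}^{-1}(\eta)} R(\gamma \| \gamma_{0}),\quad \eta \in \prbms{\mathcal{X}}.
\]
Since $\tilde{\mu}_{\ast}(\lambda^{N}) = \mu_{\ast}(\lambda^{N}) = \mu^{N}$ almost surely, $(\mu^{N})_{N\in\mathbb{N}}$ also satisfies this LDP.

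Finally I would check that $\tilde{I}$ equals the $I$ of the statement. Any $\gamma \in \tilde{\mu}_{\ast}^{-1}(\eta)$ with $R(\gamma\|\gamma_{0}) < \infty$ belongs to $\mathcal{D}$ by \hypref{HypLimit} and therefore satisfies $\mu_{\ast}(\gamma) = \tilde{\mu}_{\ast}(\gamma) = \eta$; conversely, every $\gamma \in \mathcal{D}$ with $\mu_{\ast}(\gamma) = \eta$ lies in $\tilde{\mu}_{\ast}^{-1}(\eta)$. Points in $\tilde{\mu}_{\ast}^{-1}(\eta) \setminus \mathcal{D}$ carry infinite relative entropy and contribute nothing to the infimum. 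Hence $\tilde{I}(\eta) = \inf_{\gamma \in \mathcal{D} : \mu_{\ast}(\gamma) = \eta} R(\gamma\|\gamma_{0}) = I(\eta)$, with the convention $\inf\emptyset = \infty$. The only step that requires genuine care is the passage from the prelimit fixed point identity \eqref{EqPrelimitFP} to the a.s.\ equality $\mu^{N} = \mu_{\ast}(\lambda^{N})$; everything else is bookkeeping around the contraction principle and the extension of $\mu_{\ast}$ off $\mathcal{D}$.
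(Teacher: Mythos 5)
Your proof is correct and follows essentially the same route as the paper's: Sanov's theorem for $(\lambda^{N})$, the almost sure identification $\mu^{N}=\mu_{\ast}(\lambda^{N})$ via \eqref{EqPrelimitFP} together with \hypref{HypUnique} and \hypref{HypPrelimit}, and then the contraction principle with the continuity on the effective domain of $R(\cdot\|\gamma_{0})$ supplied by \hypref{HypLimit}. The only difference is that you spell out the measurable extension of $\mu_{\ast}$ off $\mathcal{D}$ and the identification of the resulting rate function with the stated one, bookkeeping the paper leaves implicit.
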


\begin{pf} The assertion follows from Sanov's theorem and the
contraction principle. To see this, let $\lambda^{N}$ denote the
empirical measure of $Y^{N}_{1},\ldots,Y^{N}_{N}$. Then for $\Prb
_{N}$-almost all $\omega\in\Omega_{N}$,
%
\begin{equation}
\label{EqEmpiricalFP} \mu^{N}_{\omega} = \frac{1}{N} \sum
_{i=1}^{N} \delta_{\psi(\mu
^{N}_{\omega},Y^{N}_{i}(\omega))} =
\lambda^{N}_{\omega}\circ\psi^{-1} \bigl(
\mu^{N}_{\omega},\cdot\bigr) = \Psi_{\lambda
^{N}_{\omega}} \bigl(
\mu^{N}_{\omega} \bigr).
\end{equation}
Thus, $\mu^{N} = \Psi_{\lambda^{N}}(\mu^{N})$ with probability one. For
$\Prb_{N}$-almost all $\omega\in\Omega_{N}$, $\lambda^{N}_{\omega}
\in\mathcal{D}$ by assumption (A2) and, by uniqueness according to
(A1), $\mu_{\ast}(\lambda^{N}_{\omega}) =
\mu^{N}_{\omega}$. By Theorem \ref{ThSanov} (Sanov),
$(\lambda^{N})_{N\in\mathbb{N}}$ satisfies the large deviation
principle with good rate function $R(\cdot\|\gamma_{0})$. By
assumption (A3), $\mu_{\ast}(\cdot)$~is defined and continuous
on $\{\gamma\in\mathcal{P}(\mathcal{Y})\dvt R(\gamma\| \gamma_{0}) <
\infty
\}$. Theorem \ref{ThContraction} (contraction principle) therefore
applies, and it follows that
$(\mu_{\ast}(\lambda^{N}))_{N\in\mathbb{N}}$, hence
$(\mu^{N})_{N\in\mathbb{N}}$, satisfies the large deviation principle
with good rate function
\[
\mathcal{P}(\mathcal{X})\ni\eta\mapsto\inf_{\gamma\in\mathcal{D}:
\mu_{\ast}(\gamma)=\eta} R (\gamma\|
\gamma_{0} ).
\]
\upqed\end{pf}

The rate function of Theorem \ref{ThNoiseLDP} can be expressed in
relative entropy form as in Remark \ref{RemFormRateFnct}. The key
observation is the contraction property of relative entropy established
in Lemma \ref{LemmaRE} in the \hyperref[app]{Appendix}.

\begin{crll} \label{CorNoiseRateFnct}
Let $I$ be the rate function of Theorem \ref{ThNoiseLDP}. Then for all
$\eta\in\mathcal{P}(\mathcal{X})$,
\[
I(\eta) = R \bigl(\eta\| \Psi_{\gamma_{0}}(\eta) \bigr).
\]
\end{crll}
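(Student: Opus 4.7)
The plan is to reduce the corollary to the contraction property of relative entropy (the lemma stated in Appendix~\ref{AppREContraction}) applied to the map $T_\eta \doteq \psi(\eta,\cdot) : \mathcal{Y} \to \mathcal{X}$ with $\eta$ held fixed. Observe that for any $\gamma \in \prbms{\mathcal{Y}}$ one has $(T_\eta)_\ast \gamma = \gamma \circ \psi^{-1}(\eta,\cdot) = \Psi_\gamma(\eta)$, so in particular $(T_\eta)_\ast \gamma_0 = \Psi_{\gamma_0}(\eta)$. The contraction lemma, applied to the measurable map $T_\eta$ and the target measure $\eta$, should then yield
\[
  R\bigl(\eta \,\|\, \Psi_{\gamma_0}(\eta)\bigr) \;=\; \inf\bigl\{ R(\gamma\|\gamma_0) : \gamma \in \prbms{\mathcal{Y}},\; \Psi_\gamma(\eta)=\eta\bigr\}.
\]

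It then remains to match this infimum with $I(\eta) = \inf\{R(\gamma\|\gamma_0) : \gamma \in \mathcal{D},\; \mu_\ast(\gamma)=\eta\}$. First, any admissible $\gamma$ in the definition of $I(\eta)$ satisfies $\Psi_\gamma(\eta)=\Psi_\gamma(\mu_\ast(\gamma))=\mu_\ast(\gamma)=\eta$ by \hypref{HypUnique}, so the infimum in the contraction identity is no larger than $I(\eta)$. For the reverse inequality, only $\gamma$'s with $R(\gamma\|\gamma_0)<\infty$ matter; for any such $\gamma$, Assumption~\hypref{HypLimit} forces $\gamma \in \mathcal{D}$, and then $\Psi_\gamma(\eta)=\eta$ together with the uniqueness from \hypref{HypUnique} yields $\mu_\ast(\gamma)=\eta$, so $\gamma$ is also admissible in the infimum defining $I(\eta)$. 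Hence the two infima coincide and the corollary follows.

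The only genuine ingredient is the contraction identity for relative entropy, and this is where I would expect the subtlety to lie: the standard data-processing inequality gives $R((T_\eta)_\ast\nu \| (T_\eta)_\ast\gamma_0) \le R(\nu\|\gamma_0)$, and equality holds under bi-measurable bijections (as used in \citet{tanaka82}), but here $T_\eta$ is only a Borel map, so Lemma~\ref{LemmaRE} is invoked to obtain the representation of $R(\eta\|(T_\eta)_\ast\gamma_0)$ as the infimum of $R(\gamma\|\gamma_0)$ over all $\gamma$ with $(T_\eta)_\ast\gamma = \eta$. Once that lemma is in hand, the argument above is essentially bookkeeping against \hyprefall.
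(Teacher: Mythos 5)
Your proposal is correct and follows essentially the same route as the paper: the paper's proof likewise rewrites $I(\eta)=\inf_{\gamma\in\mathcal{D}:\,\mu_{\ast}(\gamma)=\eta}R(\gamma\|\gamma_{0})$ as $\inf_{\gamma\in\prbms{\mathcal{Y}}:\,\Psi_{\gamma}(\eta)=\eta}R(\gamma\|\gamma_{0})$ using \hypref{HypUnique}, \hypref{HypLimit} and the convention $\inf\emptyset=\infty$, and then invokes Lemma~\ref{LemmaRE} with the section $\psi(\eta,\ldotp)$ to identify this infimum with $R\bigl(\eta\|\Psi_{\gamma_{0}}(\eta)\bigr)$. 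Your version merely spells out the two-sided matching of the admissible sets a bit more explicitly than the paper's compressed chain of equalities.
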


\begin{pf} Let $\eta\in\mathcal{P}(\mathcal{X})$. The mapping
$\mathcal{P}(\mathcal{Y}) \ni\gamma\mapsto\Psi_{\gamma}(\eta)
\in\mathcal{P}(\mathcal{X})$ is Borel measurable. Since $\{\gamma
\in\mathcal{P}(\mathcal{X})\dvt R(\gamma\|\gamma_{0}) < \infty\}
\subset\mathcal
{D}$ and $\inf\varnothing= \infty$,
\[
\inf_{\gamma\in\mathcal{D}: \mu_{\ast}(\gamma)=\eta} R (\gamma\| \gamma
_{0} ) = \inf
_{\gamma\in\mathcal{D}:
\Psi_{\gamma}(\eta)=\eta} R (\gamma\| \gamma_{0} ) = \inf
_{\gamma\in\mathcal{P}(\mathcal{Y}): \Psi_{\gamma}(\eta
)=\eta} R (\gamma\| \gamma_{0} ).
\]
By Lemma \ref{LemmaRE}, it follows that
\[
\inf_{\gamma\in\mathcal{P}(\mathcal{Y}): \Psi_{\gamma}(\eta
)=\eta} R (\gamma\| \gamma_{0} ) = R \bigl(\eta
\| \Psi_{\gamma
_{0}}(\eta) \bigr).
\]
\upqed\end{pf}

\begin{exmpl} \label{ExmplToyModel}
Consider the toy model of Section \ref{SectRightForm}. Suppose that
$b\in\mathbf{C}_{b}(\mathbb{R})$. Then $\theta\mapsto
m_{b}(\theta) \doteq\int b(x) \mrmd \theta(x,\tilde{x})$ is bounded and
continuous as a mapping $\mathcal{P}(\mathbb{R}^{2})\rightarrow
\mathbb
{R}$. Observe\vspace*{1pt} that $m_{b}(\theta)$ depends only on the first marginal
of $\theta$. Set $\mathcal{X}\doteq\mathbb{R}^{2}$, $\mathcal
{Y}\doteq\mathbb{R}^{2}$, let $\gamma_{0}$ be the bivariate standard
normal distribution, and define $\psi\dvtx\mathcal{P}(\mathbb
{R}^{2})\times\mathbb{R}^{2} \rightarrow\mathbb{R}^{2}$ according
to (\ref{ExToyModelpsi}). Recalling (\ref{ExToySystem}), one
sees that the toy model satisfies representation (\ref{EqNoiseRep}).
Based on $\psi$, define $\Psi$ according to (\ref{ExLimitModel}).
Given any $\gamma\in\mathcal{P}(\mathbb{R}^{2})$, the mapping $\mu
\mapsto\Psi_{\gamma}(\mu)$ possesses a unique fixed point $\mu
_{\ast}(\gamma)$. To see this, suppose that $\theta\in\mathcal
{P}(\mathbb{R}^{2})$ is a fixed point, that is, $\theta=\Psi_{\gamma
}(\theta) = \gamma\circ\psi(\theta,\cdot)^{-1}$. Let
$X=(X(0),X(1))$, $Y = (Y(0),Y(1))$ be two $\mathbb{R}^{2}$-valued
random variables on some probability space $(\Omega,\mathcal{F},\Prb
)$ with distribution $\theta$ and $\gamma$, respectively. By the
fixed point property, $\Law(X) = \Law(\psi(\theta,Y))$. By
definition of $\psi$, $\Law(X(0)) = \Law(Y(0))$. Since $m_{b}(\theta
)$ depends on $\theta= \Law(X)$ only through its first marginal,
which is equal to $\Law(X(0)) = \Law(Y(0))$, we have $m_{b}(\theta)
= m_{b}(\gamma)$. It follows that, for all $B_{0}, B_{1}\in\mathcal
{B}(\mathbb{R})$,
\[
\Prb\bigl(X(1) \in B_{1} | X(0) \in B_{0} \bigr) = \Prb
\bigl(Y(0) + m_{b}(\gamma) + Y(1) \in B_{1} | Y(0) \in
B_{0} \bigr).
\]
This determines the conditional distribution of $X(1)$ given $X(0)$
and, since $\Law(X(0)) = \Law(Y(0))$, also the joint law of $X(0)$ and
$X(1)$. In fact, $\Law(X) = \Psi_{\gamma}(\gamma)$. Consequently,
$\mu_{\ast}(\gamma) \doteq\Psi_{\gamma}(\gamma)$ is the unique
solution of (\ref{EqLimitFP}). By the extended mapping theorem
for weak convergence (Theorem 5.5 in Billingsley \cite{billingsley68},
page 34) and
since $m_{b}(\cdot)\in\mathbf{C}_{b}(\mathcal{P}(\mathbb{R}^{2}))$, the
mapping $(\gamma,\mu) \mapsto\Psi_{\gamma}(\gamma)$ is continuous
as a
function $\mathcal{P}(\mathbb{R}^{2})\times\mathcal{P}(\mathbb{R}^{2})
\rightarrow\mathcal{P}(\mathbb{R}^{2})$. It follows that the mapping
$\gamma\mapsto\mu_{\ast}(\gamma) = \Psi_{\gamma}(\gamma)$ is
continuous. Assumptions \textup{(A1)--(A3)} are therefore satisfied
with the
choice $\mathcal{D}\doteq\mathcal{P}(\mathbb{R}^{2})$. By
Corollary \ref{CorNoiseRateFnct}, the sequence of empirical measures
$(\mu^{N})$ for the toy model satisfies the large deviation principle
with good rate function $I$ given by (\ref{EqToyRateFnct}). Observe
that the distribution $\gamma_{0}$ need not be the bivariate standard
normal distribution for the large deviation principle to hold; it can
be any probability measure on $\mathcal{B}(\mathbb{R}^{2})$.
\end{exmpl}

\begin{exmpl} \label{ExmplToyModelAlt}
Consider the following variation on the toy model of Section \ref
{SectRightForm} and Example \ref{ExmplToyModel}. For $N\in\mathbb
{N}$, let $(Y^{N}_{i}(t))_{i\in\{1,\ldots,N\},t\in\{0,1\}}$ be
independent standard normal real random variables as above. Denote by
$\gamma_{0}$ the bivariate standard normal distribution and let $B\in
\mathcal{B}(\mathbb{R})$ be a $\gamma_{0}$-continuity set, that is,
$\gamma_{0}(\partial( B\times\mathbb{R})) = 0$, where $\partial(
B\times\mathbb{R})$ is the boundary of $B\times\mathbb{R}$. Define
real random variables $X^{N}_{1}(t),\ldots,X^{N}_{N}(t)$, $t\in\{0,1\}
$, by
\[
X^{N}_{i}(0) \doteq Y^{N}_{i}(0),\qquad
X^{N}_{i}(1) \doteq X^{N}_{i}(0) +
\frac{1}{N} \Biggl(\sum_{j=1}^{N}
\mathbf{1}_{B}\bigl(X^{N}_{j}(0)\bigr)
\Biggr) \cdot Y^{N}_{i}(1).
\]
For this new toy model, define $\psi\dvtx\mathcal{P}(\mathbb
{R}^{2})\times
\mathbb{R}^{2} \rightarrow\mathbb{R}^{2}$ by
\[
\psi\bigl(\mu,(y,\tilde{y})\bigr)\doteq\bigl(y, y + \mu(B\times\mathbb
{R})\cdot
\tilde{y}\bigr).
\]
With this choice of $\psi$, representation (\ref{EqNoiseRep}) holds and
$\psi$ is measurable as composition of measurable maps since
$\mu\mapsto\mu(B\times\mathbb{R})$ is measurable with respect to the
Borel $\sigma$-algebra induced by the topology of weak convergence.
Based on $\psi$, define $\Psi$ according to (\ref{ExLimitModel}). As in
Example \ref{ExmplToyModel}, one checks that the fixed point equation
(\ref{EqLimitFP}) possesses a unique solution $\mu_{\ast}(\gamma)
\doteq\Psi_{\gamma}(\gamma)$ for every $\gamma\in
\mathcal{P}(\mathbb{R}^{2})$. However, if $\partial(B\times\mathbb
{R})\neq
\varnothing$, then $\mu_{\ast}(\cdot)$ is not continuous on
$\mathcal{P}(\mathbb{R}^{2})$. On the other hand, if $\gamma\in
\mathcal{P}(\mathbb{R}^{2})$ is such that $R(\gamma\| \gamma_{0}) <
\infty$, then $\gamma$ is absolutely continuous with respect to
$\gamma_{0}$, so that $B\times\mathbb{R}$ is also a $\gamma$-continuity
set. By the extended mapping theorem, it follows that $\mu_{\ast}(\cdot)$
is continuous at any such $\gamma$. Assumptions \textup{(A1)--(A3)} are
therefore satisfied, again with the choice $\mathcal{D}\doteq
\mathcal{P}(\mathbb{R}^{2})$, and Corollary \ref{CorNoiseRateFnct} yields
the large deviation principle. In this example, if
$\gamma_{0}(B\times\mathbb{R}) < 1$, then the distribution of
$\mu^{N}$, the empirical measure of $X^{N}_{1},\ldots,X^{N}_{N}$, is
not absolutely continuous with respect to $\lambda^{N}$, the empirical
measure of $Y^{N}_{1},\ldots,Y^{N}_{N}$. Indeed, in this case, the
event $\{ \nu^{N}_{(\mathbf{y},\mathbf{y})}\dvtx\mathbf{y}\in
\mathbb{R}^{N}\}\subset\mathcal{P}(\mathbb{R}^{2})$, where
$\nu^{N}_{(\mathbf{y},\mathbf{y})}$ is defined as in
Section \ref{SectRightForm}, has strictly positive probability with
respect to $\Prb\circ(\mu^{N})^{-1}$, while it has probability zero
with respect to $\Prb\circ(\lambda^{N})^{-1}$.
\end{exmpl}

\begin{exmpl}[(Discrete time systems)] \label{ExmplDiscreteTimeModel}
Let $T\in\mathbb{N}$. Let $\mathcal{X}_{0}$, $\mathcal{Y}_{0}$ be
Polish spaces, and let $\mathcal{X}$, $\mathcal{Y}$ be the Polish
product spaces $\mathcal{X}\doteq(\mathcal{X}_{0})^{T+1}$ and
$\mathcal{Y}\doteq(\mathcal{Y}_{0})^{T+1}$, respectively. Let
\[
\phi_{0}\dvtx\mathcal{Y}_{0} \rightarrow
\mathcal{X}_{0},\qquad \phi\dvtx\{1,\ldots,T\}\times\mathcal{X}_{0}
\times\mathcal{P}(\mathcal{X}_{0})\times\mathcal{Y}_{0}
\rightarrow\mathcal{X}_{0}
\]
be measurable maps. Let $\gamma_{0}\in\mathcal{P}(\mathcal{Y})$
and, for
$N\in\mathbb{N}$, let $Y^{N}_{1},\ldots,Y^{N}_{N}$ be independent
and identically distributed $\mathcal{Y}$-valued random variables
defined on some probability space $(\Omega_{N},\mathcal{F}_{N},\Prb
_{N})$ with common distribution $\gamma_{0}$. Write $Y^{N}_{i} =
(Y^{N}_{i}(t))_{t\in\{0,\ldots,T\}}$ and define $\mathcal{X}$-valued
random variables $X^{N}_{1},\ldots,X^{N}_{N}$ with $X^{N}_{i} =
(X^{N}_{i}(t))_{t\in\{0,\ldots,T\}}$ recursively by
%
\begin{eqnarray}
\label{ExDiscreteTimeModel} X^{N}_{i}(0)&\doteq&
\phi_{0} \bigl(Y^{N}_{i}(0) \bigr),
\\
X^{N}_{i}(t+1)&\doteq&\phi\bigl(t+1,X^{N}_{i}(t),
\mu^{N}(t),Y^{N}_{i}(t+1) \bigr),\qquad t\in\{0,\ldots,T
- 1\},
\nonumber
\end{eqnarray}
where $\mu^{N}(t)\doteq\frac{1}{N}\sum_{i=1}^{N}\delta _{X^{N}_{i}(t)}$
is the empirical measure of $X^{N}_{1},\ldots,X^{N}_{N}$ at marginal
(or time)~$t$. In analogy with (\ref {ExDiscreteTimeModel}), define
$\psi\dvtx\mathcal{P}(\mathcal {X})\times \mathcal{Y}
\rightarrow\mathcal{X}$ according to $(\mu,y)=
(\mu,(y_{0},\ldots,y_{T}))\mapsto\psi(\mu,y)\doteq x$ with $x =
(x_{0},\ldots,x_{T})$ given by
%
\begin{eqnarray}
\label{ExDiscreteTimeModelpsi} x_{0}&\doteq&\phi_{0}(y_{0}),
\nonumber\\[-8pt]\\[-8pt]
x_{t+1}&\doteq&\phi\bigl(t+1,x_{t},\mu(t),y_{t+1}
\bigr),\qquad t\in\{ 0,\ldots,T - 1\},
\nonumber
\end{eqnarray}
where $\mu(t)$ is the marginal of $\mu\in\mathcal{P}(\mathcal{X})$ at
time $t$. Then $\psi$ is measurable as a composition of measurable
maps, and representation (\ref{EqNoiseRep}) holds. Based on $\psi$,
define $\Psi$ according to (\ref{ExLimitModel}). Using the recursive
structure of (\ref{ExDiscreteTimeModel}) and the components of $\psi$
according to (\ref{ExDiscreteTimeModelpsi}), one checks that the fixed
point equation (\ref{EqLimitFP}) has a unique solution $\mu_{\ast
}(\gamma)$ given any $\gamma\in\mathcal{D}\doteq\mathcal
{P}(\mathcal{Y})$. To be more precise, define functions $\bar{\phi
}_{t}\dvtx\mathcal{P}(\mathcal{X}_{0})^{t}\times\mathcal{Y}
\rightarrow\mathcal
{X}_{0}$, $t\in\{0,\ldots,T\}$, recursively by
%
\begin{eqnarray}
\label{ExDiscreteTimeModelphibar} \bar{\phi}_{0}(y)&\doteq&
\phi_{0}(y_{0}),
\nonumber\\[-8pt]\\[-8pt]
\bar{\phi}_{t} \bigl((\alpha_{0},\ldots,
\alpha_{t-1}),y \bigr)&\doteq&\phi\bigl(t,\bar{\phi}_{t-1}
\bigl((\alpha_{0},\ldots,\alpha_{t-2}),y\bigr),
\alpha_{t-1}, y_{t} \bigr).
\nonumber
\end{eqnarray}
Notice that $\bar{\phi}_{t}$ depends on $y = (y_{0},\ldots,y_{T})\in
\mathcal{Y}$ only through $(y_{0},\ldots,y_{t})$. Given $\gamma\in
\mathcal{P}(\mathcal{Y})$, recursively define probability measures
$\alpha
_{t}(\gamma) \in\mathcal{P}(\mathcal{X}_{0})$, $t\in\{0,\ldots,T\}$,
according to
%
\begin{eqnarray}
\label{ExDiscreteTimeModelalpha} \alpha_{0}(\gamma) &\doteq& \gamma
\circ\bar{\phi}_{0}^{-1},
\nonumber\\[-8pt]\\[-8pt]
\alpha_{t}(\gamma) &\doteq& \gamma\circ\bar{\phi}_{t} \bigl(
\bigl(\alpha_{0}(\gamma),\ldots,\alpha_{t-1}(\gamma)\bigr),
\cdot\bigr)^{-1},\qquad t\in\{1,\ldots,T\}.
\nonumber
\end{eqnarray}
The mapping $\mathcal{P}(\mathcal{Y})\ni\gamma\mapsto\alpha
_{t}(\gamma
)\in\mathcal{P}(\mathcal{X}_{0})$ is measurable for every $t\in\{
0,\ldots,T\}$. Define $\Phi\dvtx\mathcal{P}(\mathcal
{X}_{0})^{T}\times
\mathcal{Y} \rightarrow\mathcal{X}$ by
%
\begin{equation}
\label{ExDiscreteTimeModelPhi} \Phi\bigl((\alpha_{0},\ldots,
\alpha_{T-1}),y \bigr)\doteq\bigl(\bar{\phi}_{0}(y), \bar{
\phi}_{1}(\alpha_{0},y),\ldots, \bar{\phi}_{T}
\bigl((\alpha_{0},\ldots,\alpha_{T-1}),y\bigr) \bigr).
\end{equation}
Then the mapping $\gamma\mapsto\gamma\circ\Phi((\alpha_{0}(\gamma
),\ldots,\alpha_{T-1}(\gamma)),\cdot)^{-1}$ is measurable and
provides the unique fixed point of (\ref{EqLimitFP}) with
noise distribution $\gamma\in\mathcal{P}(\mathcal{Y})$. In fact,
%
\begin{equation}
\label{EqDiscreteTimeModelFP} \mu_{\ast}(\gamma) = \gamma\circ\Phi
\bigl(\bigl(\alpha_{0}(\gamma),\ldots,\alpha_{T-1}(\gamma)
\bigr),\cdot\bigr)^{-1}.
\end{equation}
Writing $\mu_{\ast}(t,\gamma)$ for the $t$-marginal of $\mu_{\ast
}(\gamma)$, we also notice that
\[
\mu_{\ast}(t,\gamma) = \alpha_{t}(\gamma) = \gamma\circ\bar{
\phi}_{t} \bigl(\bigl(\alpha_{0}(\gamma),\ldots,
\alpha_{t-1}(\gamma)\bigr),\cdot\bigr)^{-1}.
\]
If $\phi_{0}$, $\phi$ are continuous maps, then it follows by (\ref
{EqDiscreteTimeModelFP}) and the extended mapping theorem that $\mu
_{\ast}(\cdot)$ is continuous on $\mathcal{D} = \mathcal{P}(\mathcal{Y})$,
and Corollary \ref{CorNoiseRateFnct} yields the large deviation
principle for the sequence of ``path space'' empirical measures $(\mu
^{N})_{N\in\mathbb{N}}$.
\end{exmpl}

Example \ref{ExmplDiscreteTimeModel} comprises a large class of
discrete time weakly interacting systems. The sequence of $(\mathcal
{X}_{0})^{N}$-valued random variables $X^{N}(0),\ldots,X^{N}(T)$ given
by (\ref{ExDiscreteTimeModel}) enjoys the Markov property if the
$(\mathcal{Y}_{0})^{N}$-valued random variables
$Y^{N}(0),\ldots,Y^{N}(T)$ are independent (since
$Y^{N}_{1},\ldots,Y^{N}_{N}$ are assumed to be independent and
identically distributed with common distribution $\gamma_{0}$, this
amounts to requiring that $\gamma _{0}$ be of product form, that is,
$\gamma_{0} = \bigotimes^{T}_{t=0}\nu _{t}$ for some
$\nu_{0},\ldots,\nu_{T}\in\mathcal{P}(\mathcal {Y}_{0})$). In
particular, discrete time versions of weakly interacting It{\^o}
processes as considered in Section \ref{SectIto} are covered by Example
\ref{ExmplDiscreteTimeModel}. More precisely, assuming coefficients of
diffusion type and using a standard Euler--Maruyana scheme for the
system of stochastic differential equations (\ref {EqSDEPrelimit}) and
the corresponding limit equation (\ref {EqSDELimit}), one would choose
$T\in\mathbb{N}$ and $h > 0$ so that $h\cdot T$ corresponds to the
continuous time horizon, set $\mathcal {X}_{0}\doteq\mathbb{R}^{d}$,
$\mathcal{Y}_{0}\doteq\mathbb {R}^{d_{1}}$, define
$\phi\dvtx\{1,\ldots,T\}\times\mathbb
{R}^{d}\times\mathcal{P}(\mathcal{X}_{0})\times\mathcal{Y}_{0}
\rightarrow\mathcal{X}_{0}$ according to
\[
\phi(t,x,\nu,y)\doteq x + \tilde{b} \bigl((t - 1)h, x, \nu\bigr)h +
\sqrt{h}
\cdot\tilde{\sigma} \bigl((t - 1)h, x, \nu\bigr)y,
\]
and set $\gamma_{0}\doteq\otimes^{T+1}\nu$ for some $\nu\in
\mathcal{P}(\mathbb{R}^{d_{1}})$ with mean zero and identity
covariance matrix
(in particular, $\nu\doteq N(0,\Id_{d_{1}})$ the $d_{1}$-variate
standard normal distribution). In Section \ref{SectIto} we assume for
simplicity that all component processes have the same deterministic
initial condition; this corresponds to setting $\phi_{0}\equiv x_{0}$
for some $x_{0}\in\mathbb{R}^{d}$. If the drift coefficient $b$ and
the dispersion coefficient $\sigma$ are continuous, then so is $\phi
$, and Corollary \ref{CorNoiseRateFnct} applies.

Example \ref{ExmplDiscreteTimeModel} also applies to finite state
discrete time weakly interacting Markov chains, which arise as discrete
time versions of the mean field systems found, for instance, in the
analysis of large communication networks, especially \mbox{WLANs}
(cf. Duffy \cite{duffy10}, for an overview). In this situation, the functions
$\phi_{0}$, $\phi$ are in general discontinuous in $y\in
\mathcal{Y}_{0}$; yet the hypotheses of
Corollary \ref{CorNoiseRateFnct} are still satisfied. To be more
precise, let $\mathcal{S}\doteq\{s_{1},\ldots,s_{M}\}$ be a finite
set, and let $\iota\dvtx\mathcal{S} \rightarrow\{1,\ldots,M\}$ be the
natural bijection between elements of $\mathcal{S}$ and their indices
(thus $\iota(s_{i}) = i$ for every $i\in\{1,\ldots,M\}$). The space of
probability measures $\mathcal{P}(\mathcal{S})$ can be identified with
$\{p\in[0,1]^M \dvt\sum_{k=1}^{M} p_{k} = 1\}$ endowed with the standard
metric. For $t\in\mathbb{N}_{0}$, let $a_{ij}(t,\cdot)\dvtx
\mathcal{P}(\mathcal{S}) \rightarrow[0,1]$, $i,j\in\{1,\ldots,M\}$,
be measurable maps such that, for every $p\in
\mathcal{P}(\mathcal{S})$, $A(t,p) \doteq(a_{ij}(t,p))_{i,j\in
\{1,\ldots,M\}}$ is a transition probability matrix on $\mathcal{S}
\equiv\{1,\ldots,M\}$. Let $q\in\mathcal{P}(\mathcal{S})$. Using the
notation of Example \ref{ExmplDiscreteTimeModel}, fix $T\in
\mathbb{N}$, set $\mathcal{X}_{0}\doteq\mathcal{S}$,
$\mathcal{Y}_{0}\doteq[0,1]$, $\mathcal{X}\doteq
\mathcal{X}_{0}^{T+1}$, and $\mathcal{Y}\doteq\mathcal{Y}_{0}^{T+1}$;
define $\phi\dvtx\{1,\ldots,T\}\times\mathcal{X}_{0}\times
\mathcal{P}(\mathcal{X}_{0}) \times\mathcal{Y}_{0} \rightarrow
\mathcal{X}_{0}$ by
\[
\phi(t,x,p,y)\doteq\sum_{j=1}^{M}
s_{j}\cdot\mathbf{1}_{(\sum
_{k=1}^{j-1}a_{\iota(x)k}(t-1,p), \sum_{k=1}^{j}a_{\iota(x)k}(t-1,p)]}(y),
\]
and let $\phi_{0}\dvtx\mathcal{Y}_{0} \rightarrow\mathcal{X}_{0}$ be
given by
\[
\phi_{0}(y)\doteq\sum_{j=1}^{M}
s_{j}\cdot\mathbf{1}_{(\sum
_{k=1}^{j-1}q_{k}, \sum_{k=1}^{j}q_{k}]}(y).
\]
Set $\gamma_{0}\doteq\otimes^{T+1} \lambda_{[0,1]}$ with $\lambda
_{[0,1]}$ Lebesgue measure on $\mathcal{B}([0,1]) = \mathcal
{B}(\mathcal{Y}_{0})$. For $N\in\mathbb{N}$, let
$Y^{N}_{1},\ldots,Y^{N}_{N}$ be independent and identically distributed
$\mathcal{Y}$-valued random variables with common distribution
$\gamma_{0}$, and define $\mathcal {X}$-valued random variables
$X^{N}_{1},\ldots,X^{N}_{N}$ with $X^{N}_{i} =
(X^{N}_{i}(t))_{t\in\{0,\ldots,T\}}$ recursively by
(\ref{ExDiscreteTimeModel}). Observe that
$X^{N}_{1}(0),\ldots,X^{N}_{N}(0)$ are independent and identically
distributed with common distribution $q$. Moreover, for all
\mbox{$t\in\{0,\ldots,T-1\}$}, all $\mathbf{z}\in\mathcal{S}^{N}$,
%
\begin{eqnarray}
\label{EqAppMCComponent}
&&\Prb_{N} \bigl( X^{N}(t+1)=
\mathbf{z} | X^{N}(0),\ldots,X^{N}(t) \bigr)\nonumber \\[-0.2pt]
&&\quad= \prod
_{i=1}^{N} a_{\iota(X^{N}_{i}(t))\iota
(z_{i})}\bigl(t,
\mu^{N}(t)\bigr)
\\[-0.2pt]
&&\quad= \exp\biggl(N\cdot\int_{\mathcal{S}} \log\bigl( a_{\iota
(x)\iota(z_{i})}
\bigl(t,\mu^{N}(t)\bigr) \bigr) \mu^{N}(t,\mrmdd x) \biggr),
\nonumber
\end{eqnarray}
where\vspace*{1pt} $\log(0) = -\infty$, $\RMe^{-\infty} = 0$. It follows that
$(X^{N}(t))_{t\in\{0,\ldots,T\}}$ is a Markov chain with state space
$\mathcal{S}^{N}$. Equation (\ref{EqAppMCComponent}) also implies
that $(\mu^{N}(t))_{t\in\{0,\ldots,T\}}$ is a Markov chain with
transition probabilities given by
\begin{eqnarray*}
&&\Prb_{N} \Biggl( \mu^{N}(t+1)= \frac{1}{N}\sum
_{i=1}^{N}\delta_{z_{i}} \Big|
X^{N}(0),\ldots,X^{N}(t) \Biggr)
\\[-0.2pt]
&&\quad= \sum_{\tilde{\mathbf{z}}\in\mathfrak{p}(\mathbf{z})} \exp
\biggl( N\cdot\int
_{\mathcal{S}} \log\bigl(a_{\iota(x)\iota
(\tilde{z}_{i})}\bigl(t,\mu^{N}(t)
\bigr) \bigr) \mu^{N}(t,\mrmdd x) \biggr),
\end{eqnarray*}
where $\mathfrak{p}(\mathbf{z})$ indicates the set of elements of
$\mathcal{S}^{N}$ that arise by permuting the components of
$\mathbf{z}\in\mathcal{S}^{N}$. For $i\in\{1,\ldots,N\}$, again by
(\ref{EqAppMCComponent}), the process couple
$((X^{N}_{i}(t),\mu^{N}(t)))_{t\in\{0,\ldots,T\}}$ is a Markov chain
with state space $\mathcal{S}\times\mathcal{P}(\mathcal{S})$, and its
law does not depend on the component $i$. Define the function $\psi$
according to (\ref{ExDiscreteTimeModelpsi}), and define $\Psi$
according to (\ref{ExLimitModel}). As in the more general situation of
Example \ref{ExmplDiscreteTimeModel}, equation (\ref{EqLimitFP}) (i.e.,
the fixed point equation $\Psi_{\gamma}(\mu) = \mu$) has a unique
solution $\mu_{\ast}(\gamma)$ given any $\gamma\in\mathcal
{D}\doteq\mathcal{P}(\mathcal{Y})$, representation (\ref
{EqDiscreteTimeModelFP}) holds for $\mu_{\ast}(\gamma)$, and the
mapping $\mathcal{P}(\mathcal{Y}) \ni\gamma\mapsto\mu_{\ast }(\gamma)
\in\mathcal{P}(\mathcal{X})$ is measurable. Let us assume that the maps
$p\mapsto a_{ij}(t,p)$ are continuous for all $i,j\in\{1,\ldots,M\}$,
$t\in\mathbb{N}_{0}$. In order to verify the hypotheses of Corollary
\ref{CorNoiseRateFnct}, it then remains to check that $\mu _{\ast}(\cdot)$
is continuous at any $\tilde{\gamma}\in\mathcal {P}(\mathcal{Y})$ such
that $R(\tilde{\gamma}|\gamma_{0}) < \infty $. To do this, take
$\tilde{\gamma}\in\mathcal{P}(\mathcal{Y})$ absolutely continuous with
respect to $\gamma_{0}$, and let $(\tilde{\gamma }_{n})
\subset\mathcal{P}(\mathcal{Y})$ be such that $\tilde{\gamma }_{n}
\to\tilde{\gamma}$ as $n\to\infty$. Recall (\ref
{ExDiscreteTimeModelphibar}), the definition of the functions $\bar
{\phi}_{t}$, and (\ref{ExDiscreteTimeModelalpha}), the definition of
the maps $\gamma\mapsto\alpha_{t}(\gamma)$. For $t\in\{0,\ldots,T\}$
set
\begin{eqnarray*}
D_{t}&\doteq&\bigl\{y \in\mathcal{Y}\dvt\exists\bigl(y^{n}
\bigr)_{n\in
\mathbb{N}}\subset\mathcal{Y} \mbox{ such that, as $n\to\infty$, }
y^{n}\to y \mbox{ but }
\\[-0.2pt]
&&\hspace*{5pt}\bar{\phi}_{t} \bigl(\bigl(\alpha_{0}(\tilde{
\gamma}_{n}),\ldots,\alpha_{t-1}(\tilde{\gamma}_{n})
\bigr),y^{n} \bigr) \nrightarrow\bar{\phi}_{t} \bigl(\bigl(
\alpha_{0}(\tilde{\gamma}),\ldots,\alpha_{t-1}(\tilde{
\gamma})\bigr),y \bigr) \bigr\}.
\end{eqnarray*}
By definition of $\bar{\phi}_{0}$ and $\phi_{0}$, we have
\[
D_{0} \subseteq\Biggl\{y \in\mathcal{Y}\dvt y_{0}\in
\Biggl\{ \sum_{k=1}^{j} q_{k}
\dvt j\in\{0,\ldots,M\} \Biggr\} \Biggr\}.
\]
It follows that $\gamma_{0}(D_{0}) = 0$ and, since $\tilde{\gamma}$
is absolutely continuous with respect to $\gamma_{0}$, $\tilde{\gamma
}(D_{0}) = 0$. The extended mapping theorem implies that $\alpha
_{0}(\tilde{\gamma}_{n}) \to\alpha_{0}(\tilde{\gamma})$ as $n\to
\infty$. Using this convergence, the definition of $\bar{\phi}_{1}$
in terms of $\phi$, the continuity of $p\mapsto a_{ij}(t,p)$, and the
fact that $\bar{\phi}_{0}$ is continuous on $\mathcal{Y}\setminus
D_{0}$, we find that
\[
D_{1} \subseteq D_{0}\cup\Biggl\{y \in\mathcal{Y}\dvt
y_{1}\in\Biggl\{ \sum_{k=1}^{j}
a_{\iota(\bar{\phi}_{0}(y))k}\bigl(0,\alpha_{0}(\tilde{\gamma})\bigr
)\dvt j\in
\{0,\ldots,M\} \Biggr\} \Biggr\}.
\]
Since $\bar{\phi}_{0}(y)$ depends on $y$ only through $y_{0}$ (in
fact, $\bar{\phi}_{0}(y) = \phi_{0}(y_{0})$), it follows that
$\gamma_{0}(D_{1}) = 0$, hence $\tilde{\gamma}(D_{1}) = 0$. The
extended mapping theorem in the version of Theorem 5.5 in
Billingsley~\cite{billingsley68}, page 34, implies that $\alpha
_{1}(\tilde{\gamma}_{n})\to\alpha_{1}(\tilde{\gamma})$ as $n\to
\infty$. Proceeding by induction over $t$, one checks that
\begin{eqnarray*}
D_{t} &\subseteq& D_{0}\cup\cdots\cup D_{t-1}\\
&&{}\cup
\Biggl\{y \in\mathcal{Y}\dvt
y_{t}\in\Biggl\{ \sum_{k=1}^{j}
a_{\iota(\bar{\phi}_{t-1}((\alpha
_{0}(\tilde{\gamma}),\ldots,\alpha_{t-1}(\tilde{\gamma
})),y))k}\bigl(t-1,\alpha_{t-1}(\tilde{\gamma})\bigr)\dvt j\in
\{0,\ldots,M\} \Biggr\} \Biggr\}
\end{eqnarray*}
and, since\vspace*{1pt}
$\bar{\phi}_{t-1}((\alpha_{0}(\tilde{\gamma}),\ldots,\alpha_{t-1}(\tilde{\gamma})),y)$
depends on $y$ only through the components $(y_{0},\ldots, y_{t-1})$,
$\gamma_{0}(D_{t}) = 0 = \tilde {\gamma}(D_{t})$, which implies that
$\alpha_{t}(\tilde{\gamma }_{n})\to\alpha_{t}(\tilde{\gamma})$ as
$n\to\infty$. Set $D\doteq\bigcup_{t=0}^{T} D_{t}$ and recall (\ref
{ExDiscreteTimeModelPhi}), the definition of $\Phi$. Let $y \in
\mathcal{Y}$, $(y^{n})_{n\in\mathbb{N}}\subset\mathcal{Y}$ be such that
$y^{n}\to y$ as $n\to\infty$. Then
\[
\Phi\bigl(\bigl(\alpha_{0}(\tilde{\gamma}_{n}),\ldots,
\alpha_{T-1}(\tilde{\gamma}_{n})\bigr),y^{n} \bigr)
\stackrel{n\to\infty} {\longrightarrow} \Phi\bigl(\bigl(\alpha_{0}(
\tilde{\gamma}),\ldots,\alpha_{T-1}(\tilde{\gamma})\bigr),y \bigr)
\qquad\mbox{if } y\notin D.
\]
Since $\gamma_{0}(D) = 0 = \tilde{\gamma}(D)$, the extended mapping
theorem yields
\[
\tilde{\gamma}_{n}\circ\Phi\bigl(\bigl(\alpha_{0}(\tilde{
\gamma}_{n}),\ldots,\alpha_{T-1}(\tilde{\gamma}_{n})
\bigr),\cdot\bigr)^{-1} \stackrel{n\to\infty} {\longrightarrow} \tilde{
\gamma}\circ\Phi\bigl(\bigl(\alpha_{0}(\tilde{\gamma}),\ldots,
\alpha_{T-1}(\tilde{\gamma})\bigr),\cdot\bigr)^{-1}.
\]
Recalling representation (\ref{EqDiscreteTimeModelFP}) we conclude that
\[
\mu_{\ast}(\tilde{\gamma}_{n}) \stackrel{n\to\infty} {
\longrightarrow} \mu_{\ast}(\tilde{\gamma}),
\]
which establishes continuity of $\mu_{\ast}(\cdot)$ at any $\tilde
{\gamma}$ with $R(\tilde{\gamma}\|\gamma_{0}) < \infty$ since any
such $\tilde{\gamma}$ is absolutely continuous with respect to
$\gamma_{0}$. Under the assumption that the maps $p\mapsto
a_{ij}(t,p)$ are continuous, we have thus derived the large deviation
principle for $(\mu^{N})_{N\in\mathbb{N}}$ with rate function $\eta
\mapsto R(\eta\| \Psi_{\gamma_{0}}(\eta))$; here $\Psi_{\gamma
_{0}}(\eta)$ coincides with the law of a time-inhomogeneous $\mathcal
{S}$-valued Markov chain with initial distribution $q$ and transition
matrices $A(t,\eta(t))$, $t\in\{0,\ldots,T-1\}$. The same arguments
and a completely analogous construction work for weakly interacting
Markov chains with countably infinite state space $\mathcal{S}$.
Notice that we need not require the transition probabilities
$a_{ij}(t,p)$ to be bounded away from zero; in particular, whether
$a_{ij}(t,p)$ is equal to zero or strictly positive may depend on the
measure variable $p$.

\section{Weakly interacting It{\^o} processes} \label{SectIto}

In this section, we consider weakly interacting systems described by
It{\^o} processes as studied in Budhiraja, Dupuis and Fischer \cite
{budhirajaetal12}. We show that the Laplace principle rate function
derived there in variational form can be expressed in non-variational
form in terms of relative entropy. We do not give the most general
conditions under which the results hold; in particular, we assume here
that all particles obey the same deterministic initial condition.

Let $T > 0$ be a finite time horizon, let $d,d_{1}\in\mathbb{N}$, and
let $x_{0}\in\mathbb{R}^{d}$. Set $\mathcal{X}\doteq\mathbf
{C}([0,T],\mathbb{R}^{d})$, $\mathcal{Y}\doteq\mathbf
{C}([0,T],\mathbb{R}^{d_{1}})$, equipped with the maximum norm
topology. Let $b$, $\sigma$ be predictable functionals defined on
$[0,T]\times\mathcal{X}\times\mathcal{P}(\mathbb{R}^{d})$ with values
in $\mathbb{R}^{d}$ and $\mathbb{R}^{d\times d_{1}}$, respectively. For
$N\in\mathbb{N}$, let $((\Omega_{N},\mathcal{F}^{N},\Prb
_{N}),(\mathcal{F}^{N}_{t}))$ be a stochastic basis satisfying the
usual hypotheses and carrying $N$ independent $d_{1}$-dimensional
$(\mathcal{F}^{N}_{t}))$-Wiener processes $W^{N}_{1},\ldots,W^{N}_{N}$.
The $N$-particle system is described by the solution to the system of
stochastic differential equations
%
\begin{equation}
\label{EqSDEPrelimit} \mrmdd X^{N}_{i}(t) = b
\bigl(t,X^{N}_{i},\mu^{N}(t) \bigr)\mrmd t + \sigma
\bigl(t,X^{N}_{i},\mu^{N}(t) \bigr)\mrmd W^{N}_{i}(t)
\end{equation}
with initial condition $X^{N}_{i}(0) = x_{0}$, $i\in\{1,\ldots,N\}$,
where $\mu^{N}(t)$ is the empirical measure of
$X^{N}_{1},\ldots,X^{N}_{N}$ at time $t\in[0,T]$, that is,
\[
\mu^{N}(t,\omega)\doteq\frac{1}{N}\sum
_{i=1}^{N}\delta_{X^{N}_{i}(t,\omega)},\qquad \omega\in
\Omega_{N}.
\]
The coefficients $b$, $\sigma$ in (\ref{EqSDEPrelimit}) may
depend on the entire history of the solution trajectory, not only its
current value as in the diffusion case. In the diffusion case, in fact,
one has $b(t,\phi,\nu) = \tilde{b}(t,\phi(t),\nu)$, $\sigma
(t,\phi,\nu) = \tilde{\sigma}(t,\phi(t),\nu)$ for some functions
$\tilde{b}$, $\tilde{\sigma}$ defined on $[0,T]\times\mathbb
{R}^{d}\times\mathcal{P}(\mathbb{R}^{d})$, and the solution process
$X^{N}$ is a Markov process with state space $\mathbb{R}^{N\times d}$.

Denote by $\mu^{N}$ the empirical measure of
$(X^{N}_{1},\ldots,X^{N}_{N})$ over the time interval $[0,T]$, that is,
$\mu^{N}$ is the $\mathcal{P}(\mathcal{X})$-valued random variable
defined by
\[
\mu_{\omega}^{N}\doteq\frac{1}{N}\sum
_{i=1}^{N}\delta_{X^{N}_{i}(\cdot,\omega)},\qquad \omega\in
\Omega_{N}.
\]
%
The asymptotic behavior of $\mu^{N}$ as $N$ tends to infinity can be
characterized in terms of solutions to the ``non-linear'' stochastic
differential equation
%
\begin{equation}
\label{EqSDELimit} \mrmdd X(t) = b \bigl(t,X,\Law\bigl(X(t)\bigr) \bigr)\mrmd t +
\sigma
\bigl(t,X,\Law\bigl(X(t)\bigr) \bigr)\mrmd W(t)
\end{equation}
with $\Law(X(0)) = \delta_{x_{0}}$, where $W$ is a standard
$d_{1}$-dimensional Wiener process defined on some stochastic basis.
Notice that the law of the solution itself appears in the coefficients
of (\ref{EqSDELimit}). In the diffusion case, the
corresponding Kolmogorov forward equation is therefore a non-linear
parabolic partial differential equation, and it corresponds to the
McKean--Vlasov equation of the weakly interacting system defined by
(\ref{EqSDEPrelimit}).

For the statement of the Laplace principle, we need to consider
controlled versions of (\ref{EqSDEPrelimit}) and (\ref
{EqSDELimit}), respectively. For $N\in\mathbb{N}$, let $\mathcal
{U}_{N}$ be the space of all $(\mathcal{F}^{N}_{t})$-progressively
measurable functions $u\dvtx[0,T]\times\Omega_{N} \rightarrow\mathbb
{R}^{N\times d_{1}}$ such that
\[
\Mean_{N} \Biggl[ \sum_{i=1}^{N}
\int_{0}^{T}\bigl|u_{i}(t)\bigr|^{2}\mrmd t
\Biggr] <\infty,
\]
where $u=(u_{1},\ldots,u_{N})$ and $\Mean_{N}$ denotes expectation
with respect to $\Prb_{N}$. Given $u\in\mathcal{U}_{N}$, the
counterpart of (\ref{EqSDEPrelimit}) is the system of
controlled stochastic differential equations
%
\begin{eqnarray}
\label{EqSDEPrelimitControl} \mrmdd \bar{X}^{N}_{i}(t) &=& b
\bigl(t,\bar{X}^{N}_{i},\bar{\mu}^{N}(t) \bigr)\mrmd t
+ \sigma\bigl(t,\bar{X}^{N}_{i},\bar{\mu
}^{N}(t) \bigr)u_{i}(t)\mrmd t
\nonumber\\[-8pt]\\[-8pt]
&&{}+\sigma\bigl(t,\bar{X}^{N}_{i},\bar{\mu}^{N}(t)
\bigr)\mrmd W^{N}_{i}(t)
\nonumber
\end{eqnarray}
with initial condition $\bar{X}^{N}_{i}(0)=x_{0}$, where $\bar{\mu
}^{N}(t)$ denotes the empirical measure of
$\bar{X}^{N}_{1},\ldots,\bar{X}^{N}_{N}$ at time $t$.

Let $\mathcal{U}$ be the set of quadruples $((\Omega,\mathcal
{F},\Prb),(\mathcal{F}_{t}),u,W)$ such that the pair
$((\Omega,\mathcal{F},\Prb),(\mathcal{F}_{t}))$ forms a stochastic
basis satisfying the usual hypotheses, $W$ is a $d_{1}$-dimensional
$(\mathcal{F}_{t})$-Wiener process, and $u$ is an $\mathbb
{R}^{d_{1}}$-valued $(\mathcal{F}_{t})$-progressively measurable
process such that
\[
\Mean\biggl[\int_{0}^{T} \bigl|u(t)\bigr|^{2}\mrmd t
\biggr] < \infty.
\]
For simplicity, we may write $u\in\mathcal{U}$ instead of
$((\Omega,\mathcal{F},\Prb),(\mathcal{F}_{t}),u,W)\in\mathcal{U}$.
Given $u\in\mathcal{U}$, the counterpart of (\ref{EqSDELimit})
is the controlled ``non-linear'' stochastic differential equation
%
\begin{eqnarray}
\label{EqSDELimitControl} \mrmdd \bar{X}(t) &=& b \bigl(t,\bar{X},\Law\bigl(\bar{X}(t)
\bigr) \bigr)\mrmd t + \sigma\bigl(t,\bar{X},\Law\bigl(\bar{X}(t)\bigr)
\bigr)u(t)\mrmd t
\nonumber\\[-8pt]\\[-8pt]
&&{}+\sigma\bigl(t,\bar{X},\Law\bigl(\bar{X}(t)\bigr) \bigr)\mrmd W(t)
\nonumber
\end{eqnarray}
with initial condition $\Law(\bar{X}(0)) = \delta_{x_{0}}$. A solution
of (\ref{EqSDELimitControl}) under $u\in\mathcal {U}$ is a continuous
$\mathbb{R}^{d}$-valued process $\bar{X}$ defined on the given
stochastic basis and adapted to the given filtration such that the
integral version of (\ref{EqSDELimitControl}) holds with probability
one. Denote by $\mathcal {R}_{1}$ the space of deterministic relaxed
controls with finite first moments, that is, $\mathcal{R}_{1}$ is the
set of all positive measures on
$\mathcal{B}(\mathbb{R}^{d_{1}}\times[0,T])$ such that
$r(\mathbb{R}^{d_{1}}\times[0,t]) = t$ for all $t\in[0,T]$ and $\int
_{\mathbb{R}^{d_{1}}\times[0,T]}|y| r(\mrmdd y \times \mrmdd t)<\infty$. Equip
$\mathcal{R}_{1}$ with the topology of weak convergence of measures
plus convergence of first moments. Let $u\in\mathcal{U}$. The joint
distribution of $(u,W)$ can be identified with a probability measure on
$\mathcal{B}(\mathcal{R}_{1}\times\mathcal{Y})$. If $\bar {X}$ is a
solution of (\ref{EqSDELimitControl}) under $u$, then the
joint distribution of $(\bar{X},u,W)$ can be identified with a
probability measure on $\mathcal{B}(\mathcal{Z})$, where $\mathcal
{Z}\doteq\mathcal{X}\times\mathcal{R}_{1}\times\mathcal{Y}$.

\begin{defn} \label{DefWeakUnique}
\emph{Weak uniqueness} of solutions is said to hold for
(\ref{EqSDELimitControl}) if, whenever $u, \tilde{u}\in
\mathcal{U}$ and $\bar{X}$, $\tilde{X}$ are two solutions\vspace*{1pt} of
(\ref{EqSDELimitControl}) under $u$ and $\tilde{u}$,
respectively, such that $\Prb\circ\bar{X}(0)^{-1} = \tilde{\Prb
}\circ\tilde{X}(0)^{-1}$, then $\Prb\circ(\bar{X},u,W)^{-1} =
\tilde{\Prb}\circ(\tilde{X},\tilde{u},\tilde{W})^{-1}$ as
probability measures on $\mathcal{B}(\mathcal{X}\times\mathcal
{R}_{1}\times\mathcal{Y})$.
\end{defn}

Notice that here we give a process version of what can be equivalently
formulated in terms of probability measures on $\mathcal{B}(\mathcal{Z})$.
Indeed, any integrable control process $u$ corresponds to an
$\mathcal{R}_{1}$-valued random variable. On the other hand, since the
control appears linearly in (\ref{EqSDEPrelimitControl}) and
(\ref{EqSDELimitControl}), given any adapted $\mathcal{R}_{1}$-valued
random variable, one can find an integrable control process that
produces the same solution process $\bar{X}$
(cf. Sections 2 and 6 in Budhiraja, Dupuis and Fischer~\cite{budhirajaetal12}).

\begin{rem}
In Budhiraja, Dupuis and Fischer \cite{budhirajaetal12}, weak
uniqueness for (\ref{EqSDELimitControl}) is required to hold over the
class of all $\Theta\in\mathcal{P}(\mathcal{Z})$ that correspond to a
weak solution of (\ref{EqSDELimitControl}). This requirement is
stronger than necessary. As can be seen from the definition of the rate
function and the proof of Theorem 3.1 (and Theorem 7.1)
there,\footnote{In the notation of Budhiraja, Dupuis and Fischer
\cite{budhirajaetal12}, it follows from the proof of Lemma 5.1 there
and a version of Fatou's lemma, that if $Q$ is a limit point in the
sense of convergence in distribution of the sequence of $\mathcal
{P}(\mathcal{Z})$-valued random variables $Q^{N}$, then $\int_{\mathcal
{Z}} \int_{\mathbb{R}^{d_{1}}\times[0,T]} |y|^{2} r(\mrmdd y\times \mrmdd t)
Q(\mrmdd \phi\times \mrmdd r\times \mrmdd w) < \infty$ with probability one. As to the
rate function and the Laplace upper bound, notice that the class
$\mathcal{P}_{\infty}$ only contains measures $\Theta\in\mathcal
{P}(\mathcal{Z})$ such that $\int_{\mathcal{Z}} \int_{\mathbb
{R}^{d_{1}}\times[0,T]} |y|^{2} r(\mrmdd y\times \mrmdd t) \Theta(\mrmdd \phi\times
\mrmdd r\times \mrmdd w) < \infty$.} it suffices to have weak uniqueness for
(\ref{EqSDELimitControl}) over the class of all $\Theta\in\mathcal
{P}(\mathcal{Z})$ that correspond to a weak solution of (\ref
{EqSDELimitControl}) and are such that
\[
\int_{\mathcal{Z}} \int_{\mathbb{R}^{d_{1}}\times[0,T]} |y|^{2}
r(\mrmdd y\times \mrmdd t) \Theta(\mrmdd \phi\times \mrmdd r\times \mrmdd w) < \infty.
\]
This is equivalent to requiring weak uniqueness of solutions for
(\ref{EqSDELimitControl}) with respect to $\mathcal{U}$ as
in Definition \ref{DefWeakUnique} above.
\end{rem}

The Laplace principle given in Theorem \ref{ThSDELaplace} below is a
version of Theorem 7.1 in Budhiraja, Dupuis and Fischer \cite
{budhirajaetal12}; also cf. Theorem 3.1 and Remark 3.2 there. The
following assumptions are sufficient for the Laplace principle to hold:
\begin{longlist}[(H4)]
\item[(H1)] 
The functions $b(t,\cdot,\cdot)$, $\sigma(t,\cdot,\cdot)$ are
uniformly continuous and bounded on sets $B\times P$ whenever $B
\subset\mathcal{X}$ is bounded and $P \subset\mathcal{P}(\mathbb
{R}^{d})$ is compact, uniformly in $t\in[0,T]$.

\item[(H2)] 
For all $N \in\mathbb{N}$, existence and uniqueness of solutions
holds in the strong sense for the system of $N$ equations given by
(\ref{EqSDEPrelimit}).

\item[(H3)] 
Weak uniqueness of solutions holds for (\ref{EqSDELimitControl}).

\item[(H4)] 
If $u^{N} \in\mathcal{U}_{N}$, $N\in\mathbb{N}$, are such that
\[
\sup_{N\in\mathbb{N}} \Mean\Biggl[ \frac{1}{N} \sum
_{i=1}^{N}\int_{0}^{T}\bigl|u_{i}^{N}(t)\bigr|^{2}
\mrmd t \Biggr] <\infty,
\]
then $\{\bar{\mu}^{N}\dvt N\in\mathbb{N}\}$ is tight as a family of
$\mathcal{P}(\mathcal{X})$-valued random variables, where $\bar{\mu
}^{N}$ is the empirical measure of the solution to the system of
equations (\ref{EqSDEPrelimitControl}) under $u^{N}$.
\end{longlist}

\begin{thrm}[(Budhiraja, Dupuis and Fischer \cite{budhirajaetal12})]
\label{ThSDELaplace}
Grant \textup{(H1)--(H4)}. Then the sequence $(\mu^{N})_{N\in\mathbb
{N}}$ of $\mathcal{P}(\mathcal{X})$-valued random variables satisfies the
Laplace principle with rate function $I\dvtx\mathcal{P}(\mathcal{X})
\rightarrow[0,\infty]$ given by
\[
I(\theta) = \inf_{u\in\mathcal{U}: \Law(\bar{X}^{u}) = \theta} \Mean
\biggl[ \frac{1}{2}\int
_{0}^{T} \bigl|u(t)\bigr|^{2}\mrmd t \biggr],
\]
where $\bar{X}^{u}$ is a solution of (\ref{EqSDELimitControl}) over the
time interval $[0,T]$ with $\Law(X(0)) = \delta_{x_{0}}$, and
$\inf\varnothing= \infty$ by convention.
\end{thrm}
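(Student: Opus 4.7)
The plan is to establish the Laplace principle by the weak convergence approach of \citet{budhirajaetal12}, which rests on the Budhiraja--Dupuis variational representation for positive functionals of a finite collection of independent Wiener processes. For a bounded continuous $G\!:\prbms{\mathcal{X}}\rightarrow \mathbb{R}$, that representation gives
\begin{equation*}
	-\frac{1}{N} \log\Mean_{N}\!\left[\exp\!\left(-N\cdot G(\mu^{N})\right)\right] = \inf_{u\in\mathcal{U}_{N}} \Mean_{N}\!\left[\frac{1}{2N}\sum_{i=1}^{N}\int_{0}^{T}\!|u_{i}(t)|^{2}dt + G(\bar{\mu}^{N,u})\right]\!,
\end{equation*}
where $\bar{\mu}^{N,u}$ is the empirical measure of the controlled system~\eqref{EqSDEPrelimitControl} driven by $u$, whose existence and uniqueness is provided by~\Hypref{HPrelimitEq} through a Girsanov argument. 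The task is then to pass to the limit in both directions, matching the right-hand side with $\inf_{\theta}\{I(\theta)+G(\theta)\}$.

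For the Laplace upper bound, I would pick controls $u^{N}\in\mathcal{U}_{N}$ that are $\frac{1}{N}$-optimal in the variational representation above. Standard arguments (Chebyshev plus the uniform cost bound coming from boundedness of $G$) show one may truncate so that the $L^{2}$-cost remains bounded uniformly in $N$; \Hypref{HTightness} then yields tightness of $\bar{\mu}^{N,u^{N}}$, and by a de~Finetti / exchangeability argument the joint laws of $(\bar{X}^{N,u^{N}}_{1},u^{N}_{1},W^{N}_{1},\bar{\mu}^{N,u^{N}})$ form a tight family on $\mathcal{Z}\times\prbms{\mathcal{X}}$. Along a convergent subsequence, the continuity assumption~\Hypref{HCoeffContinuity} lets one identify any limit point as the law of $(\bar{X},u,W,\Law(\bar{X}))$ where $\bar{X}$ solves the controlled McKean--Vlasov equation~\eqref{EqSDELimitControl} under $u\in\mathcal{U}$, and $\bar{\mu}^{N,u^{N}}\Rightarrow \Law(\bar{X})$. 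Lower semicontinuity of the $L^{2}$-norm under weak convergence of the $u^{N}_{1}$'s, combined with continuity of $G$, then gives
\begin{equation*}
	\liminf_{N\to\infty}\!\left(-\frac{1}{N}\log\Mean_{N}[e^{-NG(\mu^{N})}]\right) \geq \Mean\!\left[\tfrac{1}{2}\!\int_{0}^{T}\!|u(t)|^{2}dt\right] + \Mean[G(\Law(\bar{X}))] \geq \inf_{\theta}\{I(\theta)+G(\theta)\}.
\end{equation*}

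For the Laplace lower bound I would take $u^{*}\in\mathcal{U}$ nearly optimal in $\inf_{\theta}\{I(\theta)+G(\theta)\}$, with corresponding solution $\bar{X}^{*}$ of \eqref{EqSDELimitControl}. Feeding $N$ independent copies of $u^{*}$ (driven by independent Wiener processes) into \eqref{EqSDEPrelimitControl} produces admissible $u^{N}\in\mathcal{U}_{N}$; \Hypref{HTightness} again yields tightness of $\bar{\mu}^{N,u^{N}}$, and by the propagation-of-chaos style identification using~\Hypref{HCoeffContinuity} and the weak uniqueness assumption~\Hypref{HLimitEq}, any limit point must equal $\Law(\bar{X}^{*})$. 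Evaluating the variational representation at $u^{N}$ and taking $\limsup$ delivers the matching upper estimate, and an arbitrary-$\epsilon$ argument completes the bound.

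The decisive step is the identification of subsequential limits as solutions of the controlled McKean--Vlasov equation~\eqref{EqSDELimitControl}: one has to pass to the limit in the nonlinear coefficients $b(t,\bar{X}^{N,u^{N}}_{1},\bar{\mu}^{N,u^{N}}(t))$ and $\sigma(t,\bar{X}^{N,u^{N}}_{1},\bar{\mu}^{N,u^{N}}(t))$ simultaneously with the control-driven drift, and only the joint continuity afforded by \Hypref{HCoeffContinuity} together with the weak uniqueness \Hypref{HLimitEq} rules out spurious limits; this is exactly where the relaxed-control setting on $\mathcal{R}_{1}$ and the strengthening of weak uniqueness discussed in the remark preceding the theorem become essential. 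Everything else (Girsanov for \eqref{EqSDEPrelimitControl}, goodness of $I$, compactness of sublevel sets) follows from the cited weak convergence estimates and reference to~\citet{budhirajaetal12}.
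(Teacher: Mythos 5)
The paper does not prove this theorem at all: it is stated as a quotation of Theorem~7.1 (cf.\ Theorem~3.1) of \citet{budhirajaetal12}, so there is no internal proof to compare against. Your sketch correctly reproduces the strategy of that cited reference --- the Budhiraja--Dupuis variational representation yielding the prelimit stochastic control problem, tightness via \Hypref{HTightness}, identification of subsequential limits as solutions of the controlled McKean--Vlasov equation \eqref{EqSDELimitControl} using \Hypref{HCoeffContinuity} and \Hypref{HLimitEq}, and the two matching bounds --- so it is consistent with the proof the paper implicitly relies on.
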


\begin{rem} \label{RemGoodRateFnct}
The function $I$ of Theorem \ref{ThSDELaplace} is indeed a rate
function, that is, $I$ is lower semicontinuous with values in
$[0,\infty]$. The following hypothesis, which is analogous to the
stability condition \textup{(H4)}, is sufficient to guarantee
goodness of the rate function.
\begin{longlist}[(H$'$)]
\item[(H$'$)] If $(u_{n})_{n\in\mathbb{N}} \subset\mathcal{U}$ is
such that $\sup_{n\in\mathbb{N}} \Mean_{n} [ \int
_{0}^{T}|u_{n}(t)|^{2}\mrmd t ] <\infty$, then $\{\Law(\bar
{X}^{u_{n}})\dvt n\in\mathbb{N}\}$ is tight in $\mathcal{P}(\mathcal{X})$.
\end{longlist}
Under this additional assumption, $I$ is a good rate function and the
Laplace principle implies the large deviation principle.
\end{rem}

Consider the special case in which $d = d_{1}$, $x_{0} = 0$, $b\equiv
0$, and $\sigma\equiv\Id_{d}$. In this case, $\mathcal{X} =
\mathcal{Y}$ and $\mu^{N}$ is the empirical measure of $N$
independent Wiener processes $W^{N}_{1},\ldots,W^{N}_{N}$. Let $\gamma
_{0}$ be Wiener measure on $\mathcal{B}(\mathcal{Y})$. Since $\Law
(W^{N}_{i}) = \gamma_{0}$, Sanov's theorem implies that the sequence
$(\mu^{N})_{N\in\mathbb{N}}$ satisfies the large deviation /
Laplace principle with good rate function $R(\cdot\| \gamma_{0})$. On the
other hand, by Theorem \ref{ThSDELaplace}, $(\mu^{N})_{N\in\mathbb
{N}}$ satisfies the Laplace principle with rate function
\[
J(\gamma)\doteq\inf_{u\in\mathcal{U}: \Law(\bar{Y}^{u}) = \gamma
} \Mean\biggl[ \frac{1}{2}\int
_{0}^{T} \bigl|u(t)\bigr|^{2}\mrmd t \biggr],\qquad \gamma\in
\mathcal{P}(\mathcal{Y}),
\]
where $\bar{Y}^{u}$ is the process given by
%
\begin{equation}
\label{ExWienerDrift} \bar{Y}^{u}(t) \doteq\int_{0}^{t}u(s)\mrmd s
+ W(t),\qquad t\in[0,T].
\end{equation}
One checks that $J\dvtx\mathcal{P}(\mathcal{Y}) \rightarrow[0,\infty]$
has compact sublevel sets, hence is a good rate function. It follows
that $J$ coincides with the rate function obtained from Sanov's
theorem. Consequently, for all $\gamma\in\mathcal{P}(\mathcal{Y})$,
%
\begin{equation}
\label{EqREWiener} R ( \gamma\| \gamma_{0} ) = \inf
_{u\in\mathcal{U}: \Law
(\bar{Y}^{u}) = \gamma} \Mean\biggl[ \frac{1}{2}\int_{0}^{T}
\bigl|u(t)\bigr|^{2}\mrmd t \biggr].
\end{equation}

\begin{rem}
Equation (\ref{EqREWiener}) provides a ``weak'' variational
representation of relative entropy with respect to Wiener measure. In
Appendix \ref{AppREWiener}, we give a direct proof of
(\ref{EqREWiener}). The variational representation is weak in the sense
that the underlying stochastic basis may vary. In particular, the
control process $u$ may be adapted to a filtration that is strictly
bigger than the natural filtration of the Wiener process. Notice that
expectation in (\ref{EqREWiener}) is taken with respect to the
probability measure of the stochastic basis that comes with the control
process $u$.
\end{rem}

\begin{rem}
Representation (\ref{EqREWiener}) may be compared to the following
result obtained by {\"U}st{\"u}nel~\cite{uestuenel09}. Take as
stochastic basis the canonical set-up; in our notation, $((\mathcal
{Y},\mathcal{B}(\mathcal{Y}),\gamma_{0}),(\mathcal{B}_{t}))$, where
$(\mathcal{B}_{t})$ is the canonical filtration. Let $W$ be the
coordinate process. Thus, $W$ is a $d_{1}$-dimensional Wiener process
under $\gamma_{0}$ with respect to $(\mathcal{B}_{t})$. Let $u$ be an
$\mathbb{R}^{d_{1}}$-valued $(\mathcal{B}_{t})$-progressively
measurable process such that $\Mean_{\gamma_{0}} [\int_{0}^{T}
|u(t)|^{2}\mrmd t ] < \infty$. Consider $\bar{Y}^{u} = \int_{0}^{\cdot}u(s)\mrmd s +
W(\cdot)$. Since $\bar{Y}^{u}(\cdot,\omega) = \int_{0}^{\cdot}u(s,\omega)\mrmd s + \omega
(\cdot)$ for all $\omega\in\mathcal{Y}$,
$\bar{Y}^{u}$ induces a Borel measurable mapping $\mathcal{Y}
\rightarrow\mathcal{Y}$. Set $\gamma\doteq\gamma_{0}\circ(\bar
{Y}^{u})^{-1}$. By Theorem 8 in {\"U}st{\"u}nel \cite{uestuenel09},
%
\begin{equation}
\label{EqREWienerIneq} R ( \gamma\| \gamma_{0} ) \leq
\Mean_{\gamma_{0}} \biggl[\frac{1}{2}\int_{0}^{T}
\bigl|u(t)\bigr|^{2}\mrmd t \biggr].
\end{equation}
Assume in addition that $u$ is such that
\[
\Mean\biggl[ \exp\biggl(-\int_{0}^{T} u(t)
\cdot \mrmdd W(t) - \frac
{1}{2}\int_{0}^{T}
\bigl|u(t)\bigr|^{2}\mrmd t \biggr) \biggr] = 1,
\]
and that, for some $\mathbb{R}^{d_{1}}$-valued $(\mathcal
{B}_{t})$-progressively measurable process $v$,
\[
\frac{\mrmdd \gamma}{\mrmdd \gamma_{0}} = \exp\biggl(-\int_{0}^{T}
v(t)\cdot \mrmdd W(t) - \frac{1}{2}\int_{0}^{T}
\bigl|v(t)\bigr|^{2}\mrmd t \biggr),\qquad \gamma_{0}\mbox{-a.s.}
\]
Theorem 7 in {\"U}st{\"u}nel \cite{uestuenel09} then states that
equality holds in (\ref{EqREWienerIneq}) if and only if $\bar{Y}^{u}$
is $\gamma_{0}$-almost surely invertible as a mapping $\mathcal{Y}
\rightarrow\mathcal{Y}$ with inverse $\bar{Y}^{v} = \int_{0}^{\cdot}v(s)\mrmd s
+ W(\cdot)$. For similar results on abstract Wiener spaces
see Lassalle \cite{lassalle12}; Corollary 8 and Remark 4 in Section 7
therein might be compared to Lemma \ref{LemmaREWiener} in
Appendix \ref{AppREWiener} here.
\end{rem}

Let us return to the general case. Given $\theta\in\mathcal
{P}(\mathcal{X})$, denote by $\theta(t)$ the marginal distribution of
$\theta$ at time $t$ and consider the stochastic differential equation
%
\begin{equation}
\label{EqSDEParamLimit} \mrmdd X(t) = b \bigl(t,X,\theta(t) \bigr)\mrmd t + \sigma
\bigl(t,X,\theta(t) \bigr)\mrmd W(t).
\end{equation}
Equation (\ref{EqSDEParamLimit}) results from freezing the measure
variable in (\ref{EqSDELimit}) at $\theta$. We will assume
existence and pathwise uniqueness for (\ref{EqSDEParamLimit}).
\begin{longlist}[(H5)]
\item[(H5)] 
Given any $\theta\in\mathcal{P}(\mathcal{X})$, weak existence and
pathwise uniqueness hold for (\ref{EqSDEParamLimit}).
\end{longlist}

Based on representation (\ref{EqREWiener}) and the contraction
property of relative entropy, the rate function of Theorem \ref
{ThSDELaplace} can be shown to be of relative entropy form.

\begin{thrm} \label{ThSDERateFnct}
Grant \textup{(H1)--(H5)}. Then the rate function $I$ of Theorem \ref
{ThSDELaplace} can be expressed in relative entropy form as
\[
I(\theta) = R \bigl(\theta\| \Psi(\theta) \bigr),\qquad \theta\in\mathcal{P}(
\mathcal{X}),
\]
where $\Psi(\theta)$ is the law of the unique solution of
(\ref{EqSDEParamLimit}) under $\theta$ over the time
interval $[0,T]$ with initial condition $X(0) = x_{0}$.
\end{thrm}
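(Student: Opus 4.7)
The plan is to reduce the variational form of $I(\theta)$ from Theorem~\ref{ThSDELaplace} to a relative entropy by combining the Wiener variational representation \eqref{EqREWiener} with the contraction property of relative entropy (Lemma~\ref{LemmaRE}), mirroring the strategy used for Corollary~\ref{CorNoiseRateFnct}. So the whole argument unfolds inside the noise-based framework of Section~\ref{SectNoiseBased}, with the noise being Wiener measure and the map from noise to state being the strong-solution map of the frozen SDE.

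Fix $\theta\in\prbms{\mathcal{X}}$. Under \Hypref{HParamLimitEq}, weak existence plus pathwise uniqueness for the frozen equation \eqref{EqSDEParamLimit} yields, via the Yamada--Watanabe theorem, a Borel measurable strong-solution functional $\Phi_{\theta}\!:\mathcal{Y}\to\mathcal{X}$ such that $\Phi_{\theta}(W)$ solves \eqref{EqSDEParamLimit} whenever $W$ is a standard $d_{1}$-dimensional Wiener process. In particular $\Psi(\theta)=\gamma_{0}\circ\Phi_{\theta}^{-1}$.

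The heart of the argument will be the identification $\bar{X}^{u}=\Phi_{\theta}(\bar{Y}^{u})$ almost surely, whenever $u\in\mathcal{U}$ is such that the solution $\bar{X}^{u}$ of \eqref{EqSDELimitControl} satisfies $\Law(\bar{X}^{u})=\theta$, where $\bar{Y}^{u}(t)\doteq \int_{0}^{t}u(s)\,ds+W(t)$. Freezing the measure variable at $\theta$ rewrites \eqref{EqSDELimitControl} as $\bar{X}^{u}(t)=x_{0}+\int_{0}^{t}b(s,\bar{X}^{u},\theta(s))\,ds+\int_{0}^{t}\sigma(s,\bar{X}^{u},\theta(s))\,d\bar{Y}^{u}(s)$, namely the frozen equation \eqref{EqSDEParamLimit} driven by $\bar{Y}^{u}$ in place of $W$. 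A Girsanov change of measure, applied with a standard localization to bypass possible failure of Novikov's condition for merely square-integrable $u$, turns $\bar{Y}^{u}$ into a Wiener process under an equivalent probability measure, and pathwise uniqueness for \eqref{EqSDEParamLimit} then forces $\bar{X}^{u}=\Phi_{\theta}(\bar{Y}^{u})$ on that measure, hence on the original one. Conversely, for any $\gamma\in\prbms{\mathcal{Y}}$ with $\gamma\circ\Phi_{\theta}^{-1}=\theta$, a control $u\in\mathcal{U}$ with $\Law(\bar{Y}^{u})=\gamma$ produces $\bar{X}^{u}=\Phi_{\theta}(\bar{Y}^{u})$ of law $\theta$; so the constraints $\Law(\bar{X}^{u})=\theta$ and $\Law(\bar{Y}^{u})\circ\Phi_{\theta}^{-1}=\theta$ are equivalent over $\mathcal{U}$.

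With this correspondence in place, I would rewrite the infimum defining $I(\theta)$ as a double infimum, first over $\gamma\in\prbms{\mathcal{Y}}$ satisfying $\gamma\circ\Phi_{\theta}^{-1}=\theta$ and then over controls $u$ with $\Law(\bar{Y}^{u})=\gamma$. By \eqref{EqREWiener} the inner infimum equals $R(\gamma\|\gamma_{0})$, leaving $I(\theta)=\inf_{\gamma:\gamma\circ\Phi_{\theta}^{-1}=\theta}R(\gamma\|\gamma_{0})$. A final application of the contraction property of relative entropy (Lemma~\ref{LemmaRE}) collapses this to $R(\theta\|\gamma_{0}\circ\Phi_{\theta}^{-1})=R(\theta\|\Psi(\theta))$, as required. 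The main obstacle I anticipate is justifying $\bar{X}^{u}=\Phi_{\theta}(\bar{Y}^{u})$ for an arbitrary $u\in\mathcal{U}$: absent Novikov's condition the Girsanov transform is not immediate, and one has to patch together the change of measure on stopping-time subintervals, using $\Mean[\int_{0}^{T}|u|^{2}\,dt]<\infty$ together with the continuity and linear structure of the controlled SDE to pass to the limit.
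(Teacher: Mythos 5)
Your proposal is correct and follows essentially the same route as the paper's proof: the Yamada--Watanabe functional $\psi_{\theta}$ with $\Psi(\theta)=\gamma_{0}\circ\psi_{\theta}^{-1}$, the rewriting of the variational form of $I(\theta)$ as a double infimum whose inner part is evaluated by \eqref{EqREWiener} and whose outer part collapses via Lemma~\ref{LemmaRE}. The only difference is in emphasis: the paper establishes the equivalence of the constraints $\Law(\bar{X}^{u})=\theta$ and $\Law(\psi_{\theta}(\bar{Y}^{u}))=\theta$ by pushing $\psi_{\theta}$ forward onto $\bar{Y}^{u}$ and invoking weak uniqueness \Hypref{HLimitEq}, whereas you argue it via a localized Girsanov change of measure combined with pathwise uniqueness for the frozen equation \eqref{EqSDEParamLimit} --- two formulations of the same transfer of the strong-solution property along the absolutely continuous drift perturbation.
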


\begin{rem} The hypotheses of Theorem \ref{ThSDERateFnct} are
satisfied if $b$, $\sigma$ are locally Lipschitz continuous with
$\sigma$ uniformly bounded and $b$ of sub-linear growth in the
trajectory variable; see Appendix~\ref{AppItoSuff}. These sufficient
conditions are at the same time more restrictive and more general than
the assumptions made in Dawson and G{\"a}rtner \cite
{dawsongaertner87}, where the large deviation principle is derived for
weakly interacting It{\^o} diffusions. There the coefficients are only
required to be continuous, where continuity in the measure variable is
with respect to an inductive topology that is stronger than the
topology of weak convergence (but cf. Remark \ref{RemMeasureTopology}
above), and to satisfy a coercivity condition that allows for
sub-linear growth of the dispersion coefficient and for super-linear
growth of the drift vector in ``stabilizing'' directions. On the other
hand, in Dawson and G{\"a}rtner \cite{dawsongaertner87} the diffusion
matrix has to be non-degenerate and independent of the measure
variable, while here we can have degeneracy of $\sigma{\sigma
}^\mathsf{T}$ as well as measure dependence. Lastly, since here both
$b$ and
$\sigma$ are functions of the entire trajectory history, one can
capture systems with delay in the state dynamics.
\end{rem}

\begin{rem}
Assumption \textup{(H5)} can be weakened by requiring weak existence
and pathwise uniqueness of solutions to (\ref
{EqSDEParamLimit}) only for $\theta\in\mathcal{P}(\mathcal{X})$ such
that $I(\theta) < \infty$. Those measures $\theta$ are, by
definition of $I$, distributions of It{\^o} processes. The function
$\Psi$ introduced in Theorem \ref{ThSDERateFnct} would then be
defined only on the effective domain of $I$; for $\theta\in\mathcal
{P}(\mathcal{X})$ with $I(\theta) = \infty$, one can then choose
$\Psi(\theta)$ in such a way that $\theta$ is not absolutely
continuous with respect to $\Psi(\theta)$ (e.g., by choosing
between two Dirac measures).
\end{rem}

\begin{pf*}{Proof of Theorem \ref{ThSDERateFnct}}
Let $\theta\in\mathcal{P}(\mathcal{X})$. By hypothesis, weak
existence and pathwise uniqueness hold for
(\ref{EqSDEParamLimit}). By a result originally due to
Yamada and Watanabe \cite{yamadawatanabe71} (also cf. Kallenberg \cite
{kallenberg96}), there is a
Borel measurable mapping $\psi_{\theta}\dvtx\mathbb{R}^{d}\times
\mathcal{Y} \rightarrow\mathcal{X}$ such that
%
\begin{equation}
\label{EqYamadaWatanabe} \psi_{\theta}(x_{0},W) = X,\qquad \Prb
\mbox{-almost surely},
\end{equation}
whenever $X$ is a solution of (\ref{EqSDEParamLimit}) under
$\theta$ over time $[0,T]$ with initial condition $X(0) = x_{0}$ on
some stochastic basis $((\Omega,\mathcal{F},\Prb),(\mathcal
{F}_{t}))$ carrying a $d_{1}$-dimensional Wiener process $W$. For such
a solution, $\Psi(\theta) = \Law(X)$ by definition. Set $\psi
_{\theta}(\cdot)\doteq\psi_{\theta}(x_{0},\cdot)$, and let $\gamma
_{0}$ be Wiener measure on~$\mathcal{B}(\mathcal{Y})$. By
(\ref
{EqYamadaWatanabe}), $\Psi(\theta) = \psi_{\theta}(\gamma_{0}) =
\gamma_{0}\circ\psi_{\theta}^{-1}$. By Lemma \ref{LemmaRE}, the
contraction property of relative entropy, and representation (\ref
{EqREWiener}) it follows that
\begin{eqnarray*}
R \bigl(\theta\| \Psi(\theta) \bigr) &=& R \bigl(\theta\| \psi_{\theta}(
\gamma_{0}) \bigr)
\\
&=& \inf_{\gamma\in\mathcal{P}(\mathcal{Y}): \psi_{\theta}(\gamma
) =
\theta} R ( \gamma\| \gamma_{0} )
\\
&=& \inf_{\gamma\in\mathcal{P}(\mathcal{Y}): \psi_{\theta}(\gamma
) =
\theta} \inf_{u\in\mathcal{U}: \Law(\bar{Y}^{u}) = \gamma} \Mean\biggl
[
\frac{1}{2} \int_{0}^{T}
\bigl|u(t)\bigr|^{2}\mrmd t \biggr]
\\
&=& \inf_{u\in\mathcal{U}: \Law(\psi_{\theta}(\bar{Y}^{u})) =
\theta} \Mean\biggl[ \frac{1}{2} \int
_{0}^{T} \bigl|u(t)\bigr|^{2}\mrmd t \biggr],
\end{eqnarray*}
where $\bar{Y}^{u}$ is defined by (\ref{ExWienerDrift}). Let $u\in
\mathcal{U}$, and set $\tilde{X}^{u}\doteq\psi_{\theta}(\bar
{Y}^{u})$. Then, as a consequence of (\ref
{EqYamadaWatanabe}), $\tilde{X}^{u}$ solves
\[
\mrmdd X(t) = b \bigl(t,X,\theta(t) \bigr)\mrmd t + \sigma\bigl(t,X,\theta(t)
\bigr)u(t)\mrmd t + \sigma\bigl(t,X,\theta(t) \bigr)\mrmd W(t)
\]
with initial distribution $\delta_{x_{0}}$. If $u$ is such that $\Law
(\psi_{\theta}(\bar{Y}^{u})) = \theta$, then $\tilde{X}^{u}$ is a
solution of (\ref{EqSDELimitControl}) under $u$ with initial
distribution $\delta_{x_{0}}$. By assumption \textup{(H3)}, weak
uniqueness holds for (\ref{EqSDELimitControl}), hence $\Law
(\tilde{X}^{u}) = \Law(\bar{X}^{u})$ whenever $\bar{X}^{u}$ is a
solution of (\ref{EqSDELimitControl}) under $u$ with $\Law(\bar
{X}^{u}(0)) = \delta_{x_{0}}$. It follows that
\begin{eqnarray*}
R \bigl(\theta\| \Psi(\theta) \bigr) &=& \inf_{u\in\mathcal{U}:
\Law(\psi_{\theta}(\bar{Y}^{u})) = \theta} \Mean
\biggl[ \frac
{1}{2} \int_{0}^{T}
\bigl|u(t)\bigr|^{2}\mrmd t \biggr]
\\
&=& \inf_{u\in\mathcal{U}: \Law(\bar{X}^{u}) = \theta} \Mean\biggl[
\frac{1}{2} \int
_{0}^{T} \bigl|u(t)\bigr|^{2}\mrmd t \biggr]
\\
&=& I(\theta),
\end{eqnarray*}
where $I$ is the rate function of Theorem \ref{ThSDELaplace}.
\end{pf*}

\begin{rem}
Assuming in addition to \textup{(H1)--(H4)} hypothesis \textup{(H$'$)} of
Remark \ref{RemGoodRateFnct}, Theorem \ref{ThSDERateFnct} can be
proved by applying both Sanov's theorem and Theorem \ref{ThSDELaplace}
to the weakly interacting system given by equations (\ref
{EqSDEPrelimit}) with measure variable frozen at $\theta\in\mathcal
{P}(\mathcal{X})$ and then evaluating the resulting rate functions at
$\theta$.
\end{rem}

\begin{appendix}\label{app}

\section{Contraction property of relative entropy} \label{AppREContraction}

Let $\mathcal{X}$, $\mathcal{Y}$ be Polish spaces. Denote by $\Pi
_{\mathcal{X}}$, $\Pi_{\mathcal{Y}}$ the collection of all finite
and measurable partitions of $\mathcal{X}$ and $\mathcal{Y}$,
respectively. Recall that relative entropy can be approximated in terms
of finite sums; for $\eta, \nu\in\mathcal{P}(\mathcal{X})$,
%
\begin{equation}
\label{EqREApprox} R(\eta\|\nu) = \sup_{\pi\in\Pi_{\mathcal{X}}} \sum
_{A\in\pi} \eta(A)\log\biggl(\frac{\eta(A)}{\nu(A)} \biggr),
\end{equation}
see, for instance, Lemma 1.4.3(g) in Dupuis and Ellis \cite
{dupuisellis97}, page 30. For $\psi\dvtx\mathcal{Y} \rightarrow
\mathcal{X}$ measurable, $\gamma\in\mathcal{P}(\mathcal{Y})$,
denote by $\psi(\gamma) \doteq\gamma\circ\psi^{-1}$ the image
measure of $\gamma$ under $\psi$.

The following lemma extends the invariance property of relative entropy
under bijective bi-measurable mappings as given by Lemma E.2.1 in
Dupuis and Ellis \cite{dupuisellis97}, page 366, to arbitrary
measurable transformations; also cf. Theorem 2.4.1 in
Kullback \cite{kullback78}, pages 19 and 20, where the inequality that
is implied by Lemma \ref{LemmaRE} is established.

\begin{lemma} \label{LemmaRE}
Let $\psi\dvtx\mathcal{Y} \rightarrow\mathcal{X}$ be a Borel
measurable mapping. Let $\eta\in\mathcal{P}(\mathcal{X})$, $\gamma
_{0} \in\mathcal{P}(\mathcal{Y})$. Then
%
\begin{equation}
\label{EqLemmaRE} R \bigl(\eta\| \psi(\gamma_{0}) \bigr) = \inf
_{\gamma\in\mathcal
{P}(\mathcal{Y}): \psi(\gamma) = \eta} R (\gamma\| \gamma_{0} ),
\end{equation}
where $\inf\varnothing= \infty$ by convention.
\end{lemma}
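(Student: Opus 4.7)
The plan is to prove the two inequalities separately. For the upper bound $R(\eta \| \psi(\gamma_{0})) \leq \inf\{R(\gamma \| \gamma_{0}) : \psi(\gamma) = \eta\}$, I would use the finite-partition representation \eqref{EqREApprox}. Given any $\gamma$ with $\psi(\gamma) = \eta$ and any finite measurable partition $\pi$ of $\mathcal{X}$, the pullback $\{\psi^{-1}(A) : A \in \pi\}$ is a finite measurable partition of $\mathcal{Y}$, and by the definition of image measure we have $\eta(A) = \gamma(\psi^{-1}(A))$ and $\psi(\gamma_{0})(A) = \gamma_{0}(\psi^{-1}(A))$ for every $A \in \pi$. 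Hence the corresponding sum in \eqref{EqREApprox} for $(\eta, \psi(\gamma_{0}))$ equals the analogous sum for $(\gamma, \gamma_{0})$ taken over the pulled-back partition, which is at most $R(\gamma \| \gamma_{0})$. Taking the supremum over $\pi$ gives the inequality. This also shows the ``$\leq$'' direction vacuously if the infimum is over the empty set, since $\infty$ is the convention.

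For the reverse inequality, I would first dispose of the degenerate case: if $\eta$ is not absolutely continuous with respect to $\psi(\gamma_{0})$, choose $A \in \Borel{\mathcal{X}}$ with $\psi(\gamma_{0})(A) = 0$ but $\eta(A) > 0$. For any $\gamma$ with $\psi(\gamma) = \eta$, the set $\psi^{-1}(A)$ satisfies $\gamma_{0}(\psi^{-1}(A)) = 0$ but $\gamma(\psi^{-1}(A)) = \eta(A) > 0$, so $\gamma$ cannot be absolutely continuous with respect to $\gamma_{0}$, forcing $R(\gamma \| \gamma_{0}) = \infty$. Thus both sides of \eqref{EqLemmaRE} are $\infty$.

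In the nondegenerate case $\eta \ll \psi(\gamma_{0})$, the key step is an explicit construction via disintegration. Let $\nu \doteq \psi(\gamma_{0})$ and $f \doteq d\eta / d\nu$. Since $\mathcal{X}$ and $\mathcal{Y}$ are Polish, there exists a regular conditional probability $\gamma_{0}(\cdot \,|\, \psi = x)$ on $\Borel{\mathcal{Y}}$ such that $\gamma_{0}(B) = \int_{\mathcal{X}} \gamma_{0}(B \,|\, \psi = x)\,\nu(dx)$ for every $B \in \Borel{\mathcal{Y}}$, with $\gamma_{0}(\{\psi = x\} \,|\, \psi = x) = 1$ for $\nu$-almost every $x$. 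I would then define
\[
    \gamma(B) \doteq \int_{\mathcal{X}} \gamma_{0}(B \,|\, \psi = x)\,\eta(dx), \qquad B \in \Borel{\mathcal{Y}}.
\]
A direct computation gives $\psi(\gamma)(A) = \gamma(\psi^{-1}(A)) = \int \mathbf{1}_{A}(x)\,\eta(dx) = \eta(A)$, so $\psi(\gamma) = \eta$. Moreover, writing $\eta = f\cdot \nu$ and using that $f \circ \psi = f(x)$ under $\gamma_{0}(\cdot \,|\, \psi = x)$ for $\nu$-a.e.\ $x$, one checks that $d\gamma/d\gamma_{0} = f \circ \psi$. Consequently,
\[
    R(\gamma \| \gamma_{0}) = \int_{\mathcal{Y}} \log\bigl(f(\psi(y))\bigr)\,\gamma(dy) = \int_{\mathcal{X}} \log f(x)\, \eta(dx) = R(\eta \| \nu),
\]
so this particular $\gamma$ attains the infimum, which equals $R(\eta \| \psi(\gamma_{0}))$.

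The main obstacle is the construction in the nondegenerate case, specifically the verification that $d\gamma/d\gamma_{0} = f \circ \psi$. This rests on the existence of a regular conditional probability for $\gamma_{0}$ given $\psi$ (available thanks to the Polish assumption) and the fact that this conditional kernel is concentrated on the fiber $\psi^{-1}(\{x\})$ for $\nu$-almost every $x$; everything else is a routine change-of-variable calculation via Fubini.
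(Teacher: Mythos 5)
Your proof is correct, and it follows the same overall structure as the paper's: the ``$\leq$'' direction via the finite-partition representation \eqref{EqREApprox} and pulled-back partitions is identical, and in the ``$\geq$'' direction you exhibit the same minimizing measure $\gamma$, namely the one with density $f\circ\psi$ with respect to $\gamma_{0}$, where $f = d\eta/d\psi(\gamma_{0})$. The only difference is how you arrive at that minimizer: you construct it by disintegrating $\gamma_{0}$ over the fibers of $\psi$ and reweighting by $\eta$, which requires the existence of a fiber-concentrated regular conditional probability (available here because the spaces are Polish, but a nontrivial input). The paper simply \emph{defines} $\gamma(C) \doteq \int_{C} f(\psi(y))\,\gamma_{0}(dy)$ and verifies $\psi(\gamma)=\eta$ and $R(\gamma\|\gamma_{0}) = R(\eta\|\psi(\gamma_{0}))$ by the change-of-variables formula for image measures alone; this is more elementary and would work on arbitrary measurable spaces. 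Your detour is harmless but buys nothing here, since after unwinding the disintegration your $\gamma$ coincides with the paper's.
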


\begin{pf}
Suppose $\gamma\in\mathcal{P}(\mathcal{Y})$ is such that $\psi
(\gamma) = \eta$. Then, by (\ref{EqREApprox}) and the definition of
image measure,
\begin{eqnarray*}
R \bigl(\eta\| \psi(\gamma_{0}) \bigr) &=& \sup_{\pi\in\Pi
_{\mathcal{X}}}
\sum_{A\in\pi} \eta(A)\log\biggl(\frac{\eta
(A)}{\psi(\gamma_{0})(A)}
\biggr)
\\
&=& \sup_{\pi\in\Pi_{\mathcal{X}}} \sum_{A\in\pi} \gamma
\bigl(\psi^{-1}(A)\bigr)\log\biggl(\frac{\gamma(\psi^{-1}(A))}{\gamma
_{0}(\psi
^{-1}(A))} \biggr)
\\
&=& \sup_{\pi\in\Pi_{\mathcal{X}}} \sum_{B\in\psi^{-1}(\pi)}
\gamma(B)\log\biggl(\frac{\gamma(B)}{\gamma_{0}(B)} \biggr)
\\
&\leq&\sup_{\hat{\pi}\in\Pi_{\mathcal{Y}}} \sum_{B\in\hat{\pi
}}
\gamma(B)\log\biggl(\frac{\gamma(B)}{\gamma_{0}(B)} \biggr)
\\
&=& R (\gamma\| \gamma_{0} ),
\end{eqnarray*}
where $\psi^{-1}(\pi)$ denotes the partition of $\mathcal{Y}$
induced by the inverse images of $\psi$. More precisely, $\psi
^{-1}(\pi) \doteq\{ \psi^{-1}(A)\dvt A \in\pi\}$. Notice that $\psi
^{-1}(\pi)$ is indeed a finite and measurable partition of $\mathcal
{Y}$ since $\pi$ is a finite and measurable partition of $\mathcal
{X}$, inverse images under $\psi$ are Borel measurable and $\psi
^{-1}(A)\cap\psi^{-1}(\tilde{A}) = \varnothing$ whenever $A\cap
\tilde{A} = \varnothing$. Since $\inf\varnothing= \infty$, it follows that
\[
R \bigl(\eta\| \psi(\gamma_{0}) \bigr) \leq\inf_{\gamma\in
\mathcal{P}(\mathcal{Y}): \psi(\gamma) = \eta}
R (\gamma\| \gamma_{0} ).
\]
If $R (\eta\| \psi(\gamma_{0}) ) = \infty$, then the
above inequality is necessarily an equality, namely $\infty= \infty$.
Thus in order to show the opposite inequality, we may assume that
$R (\eta\| \psi(\gamma_{0}) ) < \infty$. Now $R
(\eta\| \psi(\gamma_{0}) ) < \infty$ implies that $\eta$ is
absolutely continuous with respect to $\psi(\gamma_{0})$, hence
possesses a density $f\doteq\frac{\mrmdd \eta}{\mrmdd \psi(\gamma_{0})}$. Set
\[
\gamma(C) \doteq\int_{C} f\bigl(\psi(y)\bigr)
\gamma_{0}(\mrmdd y),\qquad C\in\mathcal{B}(\mathcal{Y}).
\]
Then $\gamma$ is a probability measure 
having density $f\circ\psi$ with respect to $\gamma_{0}$. Using the
integral transformation formula and definition of $f$, we have for all
$A\in\mathcal{B}(\mathcal{X})$,
\begin{eqnarray*}
\psi(\gamma) (A) &=& \int_{\mathcal{Y}} \mathbf{1}_{\psi
^{-1}(A)}(y)
\cdot f\bigl(\psi(y)\bigr) \gamma_{0}(\mrmdd y)
\\
&=& \int_{\mathcal{Y}} \mathbf{1}_{A}\bigl(\psi(y)\bigr)
\cdot f\bigl(\psi(y)\bigr) \gamma_{0}(\mrmdd y)
\\
&=& \int_{\mathcal{X}} \mathbf{1}_{A}(x)\cdot f(x) \psi(
\gamma_{0}) (\mrmdd x)
\\
&=& \eta(A),
\end{eqnarray*}
which means that $\psi(\gamma) = \eta$. Recalling that $f\circ\psi
= \frac{\mrmdd \gamma}{\mrmdd \gamma_{0}}$, $f = \frac{\mrmdd \eta}{\mrmdd \psi(\gamma_{0})}$,
\begin{eqnarray*}
R (\gamma\| \gamma_{0} ) &=& \int_{\mathcal{Y}} f\bigl(
\psi(y)\bigr) \log\bigl(f\bigl(\psi(y)\bigr) \bigr) \gamma_{0}(\mrmdd y)
\\
&=& \int_{\mathcal{X}} f(x) \log\bigl(f(x) \bigr) \psi(\gamma
_{0}) (\mrmdd x)
\\
&=& R \bigl(\eta\| \psi(\gamma_{0}) \bigr),
\end{eqnarray*}
which proves inequality ``$\geq$'' in (\ref{EqLemmaRE}).
\end{pf}

The proof of Lemma \ref{LemmaRE} shows that the probability measure
$\gamma$ defined by $\gamma(\mrmdd y) \doteq\frac{\mrmdd \eta}{\mrmdd \psi(\gamma
_{0})}(\psi(y)) \gamma_{0}(\mrmdd y)$ attains the infimum in (\ref
{EqLemmaRE}) whenever that infimum is finite.

\section{Relative entropy with respect to Wiener measure} \label{AppREWiener}

Let $\mathcal{Y}$ be the Polish space $\mathbf{C}([0,T],\mathbb
{R}^{d})$ equipped with the maximum norm topology. Let $\mathcal{U}$
be defined as in Section \ref{SectIto} with $d_{1} = d$. Thus,
$\mathcal{U}$ is the set of quadruples $((\Omega,\mathcal{F},\Prb
),(\mathcal{F}_{t}),u,W)$ such that the pair $((\Omega,\mathcal
{F},\Prb),(\mathcal{F}_{t}))$ forms a stochastic basis satisfying the
usual hypotheses, $W$ is a $d$-dimensional $(\mathcal{F}_{t})$-Wiener
process, and $u$ is an $\mathbb{R}^{d}$-valued $(\mathcal
{F}_{t})$-progressively measurable process with $\Mean[ \int_{0}^{T}
|u(t)|^{2}\mrmd t ] < \infty$. Given $u\in\mathcal{U}$,
define $\bar{Y}^{u}$ according to (\ref{ExWienerDrift}), that is,
\[
\bar{Y}^{u}(t)\doteq W(t) + \int_{0}^{t}
u(s)\mrmd s,\qquad t\in[0,T].
\]
The following result provides a variational representation of relative
entropy with respect to Wiener measure.

\begin{lemma} \label{LemmaREWiener}
Let $\gamma_{0}$ be Wiener measure on $\mathcal{B}(\mathcal{Y})$. Then
for all $\gamma\in\mathcal{P}(\mathcal{Y})$,
%
\begin{equation}
\label{EqLemmaREWiener} R (\gamma\| \gamma_{0} ) = \inf
_{u\in\mathcal{U}: \Law
(\bar{Y}^{u}) = \gamma} \Mean\biggl[\frac{1}{2} \int_{0}^{T}
\bigl|u(t)\bigr|^{2}\mrmd t \biggr],
\end{equation}
where $\inf\varnothing= \infty$ by convention.
\end{lemma}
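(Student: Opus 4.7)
The plan is to prove the two matching inequalities separately: for every admissible $u$ the entropy is bounded above by the control cost (giving $R(\gamma\|\gamma_0)\le \inf$), and conversely a specific control produced from the density $d\gamma/d\gamma_0$ via the martingale representation theorem realises the entropy as the cost (giving $R(\gamma\|\gamma_0)\ge \inf$).

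\textbf{Upper bound.} Take any $((\Omega,\mathcal{F},\Prb),(\mathcal{F}_t),u,W)\in\mathcal{U}$ with $\Law(\bar Y^{u})=\gamma$. Use Girsanov's theorem to introduce a probability measure $Q$ equivalent to $\Prb$ under which $\bar Y^{u}$ is a $d$-dimensional Wiener process, so that $Q\circ(\bar Y^{u})^{-1}=\gamma_0$. Lemma~\ref{LemmaRE} applied to the pushforward $\omega\mapsto\bar Y^{u}(\omega)$ then gives
\[
R(\gamma\|\gamma_0) \;=\; R\bigl(\Prb\circ(\bar Y^{u})^{-1}\,\big\|\,Q\circ(\bar Y^{u})^{-1}\bigr) \;\le\; R(\Prb\|Q).
\]
The Girsanov density yields $\log(d\Prb/dQ)=\int_0^T u\cdot dW+\tfrac12\int_0^T|u|^2\,dt$, and since $u\in\mathcal{U}$ makes $\int u\cdot dW$ a true $\Prb$-martingale, taking $\Prb$-expectation reduces $R(\Prb\|Q)$ to $\tfrac12\Mean[\int_0^T|u|^2\,dt]$. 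Because $u$ need not satisfy Novikov's condition, $Q$ is first constructed along the localising stopping times $\tau_n=\inf\{t:\int_0^t|u|^2\,ds\ge n\}$, and the general statement is obtained by passing to the limit with the help of lower semicontinuity of relative entropy.

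\textbf{Lower bound construction.} If $R(\gamma\|\gamma_0)=\infty$ there is nothing to prove, so assume $R(\gamma\|\gamma_0)<\infty$; this forces $\gamma\ll\gamma_0$ with density $Z=d\gamma/d\gamma_0$. Work on the canonical space $(\mathcal{Y},\Borel{\mathcal{Y}},\gamma_0)$ with coordinate process $Y$ (a Wiener process under $\gamma_0$) and its augmented canonical filtration $(\mathcal{B}_t)$. Set $M_t=\Mean_{\gamma_0}[Z\mid\mathcal{B}_t]$; by Itô's martingale representation theorem there is a progressive $\psi$ such that $M_t=1+\int_0^t\psi_s\cdot dY_s$. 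Since $M$ is a nonnegative martingale with $M_T=Z>0$ $\gamma$-a.s., it is strictly positive on $[0,T]$ $\gamma$-a.s., so $u_t:=\psi_t/M_t$ is well-defined and
\[
M_t \;=\; \exp\!\Bigl(\int_0^t u_s\cdot dY_s-\tfrac12\int_0^t|u_s|^2\,ds\Bigr).
\]
Girsanov then makes $\tilde W_t:=Y_t-\int_0^t u_s\,ds$ a $\gamma$-Wiener process, and on this basis the coordinate process coincides with $\bar Y^{u}$ and therefore has law $\gamma$ tautologically.

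\textbf{Equality and main obstacle.} Substituting $dY=d\tilde W+u\,dt$ into the display above produces the key identity
\[
\log Z \;=\; \int_0^T u\cdot d\tilde W + \tfrac12\int_0^T|u|^2\,ds \quad \gamma\text{-a.s.}
\]
Taking $\gamma$-expectation would give $R(\gamma\|\gamma_0)=\tfrac12\Mean_\gamma[\int_0^T|u|^2\,ds]$ once the stochastic integral is shown to have vanishing expectation, but a priori $\int u\cdot d\tilde W$ is only a $\gamma$-local martingale, and this is the main technical hurdle. I plan to handle it by truncation: replace $Z$ by $Z^n=(Z\wedge n)/c_n$ to obtain bounded controls $u^n$ for which the same calculation is rigorous and gives $\tfrac12\Mean_{\gamma^n}[\int_0^T|u^n|^2\,ds]=R(\gamma^n\|\gamma_0)$ at level $n$; then pass to the limit using Fatou's lemma, the lower semicontinuity of relative entropy, and the upper bound already proved to conclude both that $u\in\mathcal{U}$ and that $\tfrac12\Mean_\gamma[\int_0^T|u|^2\,ds]\le R(\gamma\|\gamma_0)$. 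Combined with the upper bound applied to this specific $u$, equality follows and thereby establishes $R(\gamma\|\gamma_0)\ge \inf$.
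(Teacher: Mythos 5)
Your upper bound takes a genuinely different route from the paper's and it is sound: instead of approximating $u$ by simple processes and then combining the Donsker--Varadhan formula with Jensen's inequality, you push $R(\gamma\|\gamma_{0})\le R(\Prb\|Q)$ through the map $\omega\mapsto\bar{Y}^{u}(\omega)$ and compute $R(\Prb\|Q)$ directly from the Girsanov density. With the time-localization $u\mathbf{1}_{[0,\tau_{n}]}$ to make Girsanov legitimate and lower semicontinuity of $R(\,\cdot\,\|\gamma_{0})$ to pass to the limit, this works; note only that the data-processing inequality must be used in its general measurable-space form (the stochastic basis $\Omega$ need not be Polish, but the partition formula \eqref{EqREApprox} still applies), and that $R(\Prb\|Q)$ is to be computed on $\mathcal{F}_{T}$.

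The lower bound, however, has a genuine gap. The construction of $u=\psi/M$ from the martingale representation of $d\gamma/d\gamma_{0}$ is exactly the paper's, and you correctly isolate the obstacle: $\int_{0}^{\cdot}u\cdot d\tilde{W}$ is a priori only a $\gamma$-local martingale. But truncating the density to $Z^{n}=(Z\wedge n)/c_{n}$ does not remove it. A bounded density does not give a bounded control: $u^{n}=\psi^{n}/M^{n}$, and neither the representation integrand $\psi^{n}$ nor $1/M^{n}$ is bounded, so the identity $\tfrac12\Mean_{\gamma^{n}}[\int_{0}^{T}|u^{n}|^{2}ds]=R(\gamma^{n}\|\gamma_{0})$ ``at level $n$'' faces exactly the same local-martingale problem and itself requires a localization to prove. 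Moreover, the passage to the limit via Fatou presupposes $u^{n}\to u$ in some pointwise sense, and there is no reason the integrands $\psi^{n}$ of the martingales $\Mean_{\gamma_{0}}[Z^{n}\,|\,\mathcal{B}_{t}]$ converge to $\psi$: convergence $Z\wedge n\to Z$ in $L^{1}(\gamma_{0})$ gives no control on the representation integrands. The correct localization is in time and is applied to the single control $u$ built from the untruncated density: set $\tau_{n}\doteq\inf\{t:\int_{0}^{t}|u(s)|^{2}ds>n\}\wedge T$, let $\gamma_{n}$ have density $\xi_{n}=\exp(\int_{0}^{\tau_{n}}u\cdot dY-\tfrac12\int_{0}^{\tau_{n}}|u|^{2}ds)$ with respect to $\gamma_{0}$ (a probability measure by Novikov's criterion), and decompose $R(\gamma\|\gamma_{0})=R(\gamma\|\gamma_{n})+\Mean_{\gamma}[\int_{0}^{\tau_{n}}u\cdot d\tilde{Y}]+\Mean_{\gamma}[\tfrac12\int_{0}^{\tau_{n}}|u|^{2}ds]$; the stopped stochastic integral has quadratic variation bounded by $n$ and hence zero $\gamma$-expectation, so dropping the nonnegative term $R(\gamma\|\gamma_{n})$ and letting $n\to\infty$ by monotone convergence yields $R(\gamma\|\gamma_{0})\ge\Mean_{\gamma}[\tfrac12\int_{0}^{T}|u|^{2}ds]$, which simultaneously shows $u\in\mathcal{U}$.
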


The proof of inequality ``$\leq$'' in (\ref{EqLemmaREWiener}) relies
on the lower semicontinuity of relative entropy and the
Donsker--Varadhan variational formula; it may be confronted to the first
part of the proof of Theorem 3.1 in Bou{\'e} and Dupuis \cite
{bouedupuis98}. The proof of inequality ``$\geq$'' exploits the
variational formulation and uses arguments contained in
F{\"o}llmer \cite{foellmer85,foellmer86}.

\begin{pf*}{Proof of Lemma \ref{LemmaREWiener}}
In order to prove inequality ``$\leq$'' in (\ref{EqLemmaREWiener}),
it suffices to show that, for all $u\in\mathcal{U}$,
%
\begin{equation}
\label{EqLemmaREWienerLeq} R \bigl(\Law\bigl(\bar{Y}^{u}\bigr) \|
\gamma_{0} \bigr) \leq\Mean\biggl[\frac{1}{2} \int
_{0}^{T} \bigl|u(t)\bigr|^{2}\mrmd t \biggr].
\end{equation}
Let $u\in\mathcal{U}$, and set $\gamma\doteq\Law(\bar{Y}^{u}) =
\Prb\circ(\bar{Y}^{u})^{-1}$. In accordance with Definition 3.2.3 in
Karatzas and Shreve \cite{karatzasshreve91},
page 132, a process $v$ defined on $((\Omega,\mathcal{F},\Prb
),(\mathcal{F}_{t}))$ is called simple if there are $N\in\mathbb
{N}$, $0=t_{0}<\cdots<t_{N}=T$, and uniformly bounded $\mathbb
{R}^{d}$-valued random variables $\xi_{0},\ldots,\xi_{N}$ such that
$\xi_{i}$ is $\mathcal{F}_{t_{i}}$-measurable and
\[
v(t,\omega) = \xi_{0}(\omega) \mathbf{1}_{\{0\}}(t) +
\sum_{i=0}^{N} \xi_{i}(\omega)
\mathbf{1}_{(t_{i},t_{i+1}]}(t).
\]
By Proposition 3.2.6 in Karatzas and Shreve \cite{karatzasshreve91},
page 134, there exists a sequence $(v_{n})_{n\in\mathbb{N}}$ of
simple processes such that $\Mean[\int_{0}^{T}
|u(t)-v_{n}(t)|^{2}\mrmd t ] \to0$ as $n\to\infty$. Let
$(v_{n})_{n\in\mathbb{N}}$ be such a sequence. For $n\in\mathbb
{N}$, set $\gamma_{n}\doteq\Law(\bar{Y}^{v_{n}})$. Then $\gamma
_{n} \to\gamma$ in $\mathcal{P}(\mathcal{Y})$ since
\[
\Mean\Bigl[\sup_{t\in[0,T]} \bigl|\bar{Y}^{u}(t)- \bar
{Y}^{v_{n}}(t)\bigr|^{2} \Bigr] \leq T\cdot\Mean\biggl[\int
_{0}^{T} \bigl|u(t)-v_{n}(t)\bigr|^{2}\mrmd t
\biggr] \stackrel{n\to\infty} {\longrightarrow} 0.
\]
Therefore, by the lower semicontinuity of $R(\cdot\|\gamma_{0})$,
\begin{eqnarray*}
R \bigl(\Law\bigl(\bar{Y}^{u}\bigr) \| \gamma_{0} \bigr) &=&
R (\gamma\| \gamma_{0} ) \leq\liminf_{n\to\infty} R (
\gamma_{n} \| \gamma_{0} ) \\
&=& \liminf_{n\to\infty}
R \bigl(\Law\bigl(\bar{Y}^{v_{n}}\bigr) \| \gamma_{0} \bigr).
\end{eqnarray*}
On the other hand, $\Mean[\frac{1}{2}\int_{0}^{T}
|v_{n}(t)|^{2}\mrmd t ] \to\Mean[\frac{1}{2}\int_{0}^{T}
|u(t)|^{2}\mrmd t ]$ as $n\to\infty$. It is therefore enough to
show that (\ref{EqLemmaREWienerLeq}) holds whenever $u$ is a simple
process. Thus, assume that $u$ is simple. Let $Z$ be the $\mathcal
{F}_{T}$-measurable $(0,\infty)$-valued random variable given by
\[
Z\doteq\exp\biggl(-\int_{0}^{T} u(s)\cdot \mrmdd W(s)
- \frac{1}{2} \int_{0}^{T}
\bigl|u(s)\bigr|^{2}\mrmd s \biggr).
\]
Notice that $\Mean[ Z] = 1$ since $u$ is uniformly bounded. Define a
probability measure $\tilde{\Prb}$ on $(\Omega,\mathcal{F}_{T})$ by
\[
\frac{\mrmdd \tilde{\Prb}}{\mrmdd \Prb} \doteq Z.
\]
By Girsanov's theorem (Theorem 3.5.1 in Karatzas and Shreve \cite
{karatzasshreve91}, page
191), $\bar{Y}^{u}$ is an $(\mathcal{F}_{t})$-Wiener process with
respect to $\tilde{\Prb}$. By the Donsker--Varadhan variational formula
for relative entropy (Lemma 1.4.3(a) in Dupuis and Ellis \cite
{dupuisellis97}, page 29),
%
\begin{equation}
\label{EqDonskerVaradhan} R (\gamma\| \gamma_{0} ) = \sup
_{g\in\mathbf{C}_{b}(\mathcal{Y})} \biggl\{\int_{\mathcal{Y}} g(y) \gamma(\mrmdd y) -
\log\int_{\mathcal{Y}} \RMe^{g(y)} \gamma_{0}(\mrmdd y)
\biggr\}.
\end{equation}
Recall that $\gamma= \Prb\circ(\bar{Y}^{u})^{-1}$ and $\gamma_{0}
= \Prb\circ W^{-1}$, but also $\gamma_{0} = \tilde{\Prb}\circ(\bar
{Y}^{u})^{-1}$ since $\bar{Y}^{u}$ is a Wiener process under $\tilde
{P}$. Let $\tilde{\Mean}$ denote expectation with respect to $\tilde
{\Prb}$. By the convexity of $-\log$ and Jensen's inequality, for all
$g\in\mathbf{C}_{b}(\mathcal{Y})$,
\begin{eqnarray*}
&&\int_{\mathcal{Y}} g(y) \gamma(\mrmdd y) - \log\int_{\mathcal{Y}}
\RMe^{g(y)} \gamma_{0}(\mrmdd y)
\\
&&\quad= \Mean\bigl[g\bigl(\bar{Y}^{u}\bigr) \bigr] - \log\Mean\bigl[\exp
\bigl(g(W) \bigr) \bigr]
\\
&&\quad= \Mean\bigl[g\bigl(\bar{Y}^{u}\bigr) \bigr] - \log\tilde{\Mean}
\bigl[\exp\bigl(g\bigl(\bar{Y}^{u}\bigr) \bigr) \bigr]
\\
&&\quad= \Mean\bigl[g\bigl(\bar{Y}^{u}\bigr) \bigr] - \log\Mean\bigl[\exp
\bigl(g\bigl(\bar{Y}^{u}\bigr) \bigr)\cdot Z \bigr]
\\
&&\quad= \Mean\bigl[g\bigl(\bar{Y}^{u}\bigr) \bigr]\\
&&\qquad{} - \log\Mean\biggl[
\exp\biggl(g\bigl(\bar{Y}^{u}\bigr) - \int_{0}^{T}
u(t)\cdot \mrmdd W(t) - \frac{1}{2}\int_{0}^{T}
\bigl|u(t)\bigr|^{2}\mrmd t \biggr) \biggr]
\\
&&\quad\leq\Mean\bigl[g\bigl(\bar{Y}^{u}\bigr) \bigr] - \Mean\biggl[g
\bigl(\bar{Y}^{u}\bigr) - \int_{0}^{T}
u(t)\cdot \mrmdd W(t) - \frac{1}{2}\int_{0}^{T}
\bigl|u(t)\bigr|^{2}\mrmd t \biggr]
\\
&&\quad= \Mean\biggl[\frac{1}{2}\int_{0}^{T}
\bigl|u(t)\bigr|^{2}\mrmd t \biggr]
\end{eqnarray*}
since $\Mean[\int_{0}^{T} u(t)\cdot \mrmdd W(t) ] = 0$ as $u$
is square integrable. In view of (\ref{EqDonskerVaradhan}), inequality
(\ref{EqLemmaREWienerLeq}) follows.

In order to prove inequality ``$\geq$'' in (\ref{EqLemmaREWiener}),
it suffices to consider probability measures with finite relative
entropy with respect to Wiener measure. Let $\gamma\in\mathcal
{P}(\mathcal{Y})$ be such that $R(\gamma\| \gamma_{0}) < \infty$. In
particular, $\gamma$ is absolutely continuous with respect to $\gamma
_{0}$. We have to show that there exists $u\in\mathcal{U}$ such that
$\Law(\bar{Y}^{u}) = \gamma$ and $R(\gamma\| \gamma_{0}) \geq
\Mean[\frac{1}{2}\int_{0}^{T} |u(t)|^{2}\mrmd t ]$. Let $Y$
be the coordinate process on the canonical space $(\mathcal
{Y},\mathcal{B}(\mathcal{Y}))$, and let $(\mathcal{B}_{t})_{t\in
[0,T]}$ be the
canonical filtration (the natural filtration of $Y$). Denote by $(\hat
{\mathcal{B}}_{t})$ the $\gamma_{0}$-augmentation of $(\mathcal
{B}_{t})$. Both $\gamma_{0}$ and $\gamma$ extend naturally to $\hat
{\mathcal{B}}_{T} \supset\mathcal{B}(\mathcal{Y})$. Clearly, $Y$ is a
$(\hat{\mathcal{B}}_{t})$-Wiener process under $\gamma_{0}$. Since
$R(\gamma\| \gamma_{0}) < \infty$, there is a $[0,\infty)$-valued
$\hat{\mathcal{B}}_{T}$-measurable random variable $\xi$ such that
\begin{eqnarray*}
\frac{\mrmdd \gamma}{\mrmdd \gamma_{0}} &=& \xi,\\
\Mean_{\gamma_{0}} [ \xi] &=& 1,\\
\Mean
_{\gamma}
\bigl[ \bigl|\log(\xi)\bigr| \bigr] &=& \Mean_{\gamma_{0}} \bigl[ \bigl|\log(\xi)\bigr|\xi
\bigr] <
\infty.
\end{eqnarray*}
Set $Z(t)\doteq\Mean_{\gamma_{0}}[\xi| \hat{\mathcal{B}}_{t} ]$,
$t\in[0,T]$. 
By a version of It{\^o}'s martingale representation theorem
(Theorem III.4.33 in Jacod and Shiryaev \cite{jacodshiryaev03}, page
189), there exists an $\mathbb{R}^{d}$-valued $(\hat{\mathcal
{B}}_{t})$-progressively measurable process $v$ such that $\gamma
_{0}(\int_{0}^{T} |v(t)|^{2}\mrmd t < \infty) = 1$ and
%
\begin{equation}
\label{EqLemmaREWienerRep} Z(t) = 1 + \int_{0}^{t}
v(s)\cdot \mrmdd Y(s) \qquad\mbox{for all }t\in[0,T],\qquad
\gamma_{0}\mbox{-a.s.}
\end{equation}
In particular, $Z$ is a continuous process. By the continuity and
martingale property of $Z$, and since $Z(T) = \xi$,
\[
\gamma\Bigl( \inf_{t\in[0,T]} Z(t) > 0 \Bigr) = 1.
\]
%
Define an $\mathbb{R}^{d}$-valued $(\hat{\mathcal
{B}}_{t})$-progressively measurable process $u$ by
%
\begin{equation}
\label{ExLemmaREWienerControl} u(t) \doteq\frac{1}{Z(t)}\cdot v(t) \cdot
\mathbf{1}_{\{\inf
_{s\in[0,t]} Z(s) >0 \}},\qquad t\in[0,T].
\end{equation}
Thus $u(t) = v(t)/Z(t)$ $\gamma$-almost surely. Applying It{\^o}'s
formula to calculate $\log(Z(t))$ (more precisely, It{\^o}'s formula
is applied to $\phi_{\epsilon}(Z(t))$ with $\phi_{\epsilon} \in
\mathbf{C}^{2}(\mathbb{R})$ such that $\phi_{\epsilon}(x) = \log
(x)$ for all $x \geq\epsilon> 0$), one checks that
%
\begin{equation}
\label{EqLemmaREWienerExp} Z(t) = \exp\biggl( \int_{0}^{t}u(s)
\cdot \mrmdd Y(s) - \frac{1}{2} \int_{0}^{t}
\bigl|u(s)\bigr|^{2}\mrmd t \biggr)\qquad\mbox{for all } t\in[0,T],\qquad \gamma\mbox{-a.s.}
\end{equation}
Set $\tilde{Y}(t) \doteq Y(t) - \int_{0}^{t} u(s)\mrmd s$, $t\in[0,T]$.
Then $\tilde{Y}$ is a $(\hat{\mathcal{B}}_{t})$-Wiener process with
respect to $\gamma$. Clearly, $\tilde{Y}$ is continuous and
$(\hat{\mathcal{B}}_{t})$-adapted. Since $\gamma$ is absolutely
continuous with respect to $\gamma_{0}$, the quadratic covariation
processes of $Y$ are the same with respect to $\gamma_{0}$ as with
respect to $\gamma$. Since $\int_{0}^{\cdot}u(t)\mrmd t$ is a process of finite
total variation with $\gamma$-probability one, it follows that
$\tilde{Y}$ has the same quadratic covariations under $\gamma$ as $Y$
under $\gamma_{0}$. In view of L{\'e}vy's characterization of the
Wiener process (Theorem 3.3.16 in Karatzas and Shreve \cite
{karatzasshreve91}, page 157), it
suffices to check that $\tilde{Y}$ is a local martingale with respect
to $(\hat{\mathcal{B}}_{t})$ and $\gamma$. But this follows from the
version of Girsanov's theorem provided by Theorem III.3.11 in
Jacod and Shiryaev \cite{jacodshiryaev03}, pages 168 and 169, and the
fact that, thanks to
(\ref{EqLemmaREWienerRep}), the quadratic covariations of the
continuous processes $Y_{i}$, $i\in\{1,\ldots,d\}$, and $Z$ are given
by
\[
[ Y_{i}, Z ](t) = \langle Y_{i}, Z \rangle(t) = \int
_{0}^{t} v_{i}(s)\mrmd s \qquad\mbox{for all }t
\in[0,T],\qquad \gamma_{0}\mbox{-a.s.},
\]
and $v(t) = u(t)\cdot Z(t)$ $\gamma$-almost surely. For $n\in\mathbb
{N}$, define a $(\hat{\mathcal{B}}_{t})$-stopping time $\tau_{n}$ by
\[
\tau_{n}\doteq\inf\biggl\{ t \geq0\dvt\int_{0}^{t}\bigl|u(s)\bigr|^{2}\mrmd s
> n \biggr\}\wedge T.
\]
Set
\[
\xi_{n}\doteq\exp\biggl( \int_{0}^{\tau_{n}}
u(s)\cdot \mrmdd Y(s) - \frac{1}{2} \int_{0}^{\tau_{n}}
\bigl|u(s)\bigr|^{2}\mrmd s \biggr).
\]
Then $\xi_{n}$ is well defined with $\xi_{n} > 0$ $\gamma
_{0}$-almost surely (hence also $\gamma$-almost surely). By Novikov's criterion
(Corollary 3.5.13 in Karatzas and Shreve \cite{karatzasshreve91}, page
199) and the version of Girsanov's theorem cited in the first part of
the proof,
\[
\frac{\mrmdd \gamma_{n}}{\mathrm{d}\gamma_{0}} \doteq\xi_{n}
\]
defines a probability measure $\gamma_{n}$ which is equivalent to
$\gamma_{0}$. As a consequence, $\gamma$ is absolutely continuous
with respect to $\gamma_{n}$ with density given by $\xi/\xi_{n}$. It
follows that
\begin{eqnarray*}
R ( \gamma\| \gamma_{0} ) &=& \Mean_{\gamma} \bigl[ \log(\xi)
\bigr]
\\
&=& \Mean_{\gamma} \biggl[ \log\biggl(\frac{\xi}{\xi_{n}} \biggr) \biggr
] +
\Mean_{\gamma} \bigl[ \log(\xi_{n}) \bigr]
\\
&=& R ( \gamma\| \gamma_{n} ) + \Mean_{\gamma} \biggl[ \int
_{0}^{\tau_{n}} u(s)\cdot \mrmdd Y(s) - \frac{1}{2} \int
_{0}^{\tau_{n}} \bigl|u(s)\bigr|^{2}\mrmdd s \biggr]
\\
&=& R ( \gamma\| \gamma_{n} ) + \Mean_{\gamma} \biggl[ \int
_{0}^{\tau_{n}} u(s)\cdot \mrmdd \tilde{Y}(s) \biggr] +
\Mean_{\gamma
} \biggl[ \frac{1}{2} \int_{0}^{\tau_{n}}
\bigl|u(s)\bigr|^{2}\mrmd s \biggr]
\\
&=& R ( \gamma\| \gamma_{n} ) + \Mean_{\gamma} \biggl[
\frac{1}{2} \int_{0}^{\tau_{n}}
\bigl|u(s)\bigr|^{2}\mrmd s \biggr]
\end{eqnarray*}
since $\tilde{Y}$ is a $\gamma$-Wiener process and $\int_{0}^{T}
\mathbf{1}_{[0,\tau_{n}]}(s)\cdot|u(s)|^{2}\mrmd s \leq n$ by
construction of $\tau_{n}$. Since relative entropy is non-negative and
$\Mean_{\gamma} [ \frac{1}{2} \int_{0}^{\tau_{n}}
|u(s)|^{2}\mrmd s ] \to\Mean_{\gamma} [ \frac{1}{2} \int_{0}^{T}
|u(s)|^{2}\mrmd s ]$ in $[0,\infty]$ as $n\to\infty$ by
monotone convergence, we obtain
%
\begin{equation}
\label{EqLemmaREWienerEst} R ( \gamma\| \gamma_{0} ) \geq
\Mean_{\gamma} \biggl[ \frac{1}{2} \int_{0}^{T}
\bigl|u(s)\bigr|^{2}\mrmd s \biggr].
\end{equation}
Since $R(\gamma\| \gamma_{0}) < \infty$ by assumption, also $\Mean
_{\gamma} [ \frac{1}{2} \int_{0}^{T} |u(s)|^{2}\mrmd s ] <
\infty$, which together with (\ref{EqLemmaREWienerExp}) actually
implies equality in (\ref{EqLemmaREWienerEst}).

Now we are in a position to choose $((\Omega,\mathcal{F},\Prb
),(\mathcal{F}_{t}),u,W) \in\mathcal{U}$ such that
\[
\Prb\circ\biggl( W + \int_{0}^{\cdot}u(s)\mrmd s
\biggr)^{-1} = \gamma\quad\mbox{and}\quad R (\gamma\| \gamma_{0} ) \geq
\Mean\biggl[ \frac
{1}{2} \int_{0}^{T}
\bigl|u(s)\bigr|^{2}\mrmd s \biggr].
\]
Take $\Omega\doteq\mathcal{Y}$, let $\mathcal{F}$ be the $\gamma
$-completion of $\mathcal{B}_{T}$, and take $\Prb$ equal to $\gamma
$, extended to the additional null sets. Let $(\mathcal{F}_{t})$ be
the $\gamma$-augmentation of $(\mathcal{B}_{t})$. Notice that $\hat
{\mathcal{B}}_{t} \subseteq\mathcal{F}_{t}$, $t\in[0,T]$, and that
$(\mathcal{F}_{t})$ satisfies the usual hypotheses. Define the control
process $u$ according to (\ref{ExLemmaREWienerControl}), and set $W
\doteq\tilde{Y}$. Then $W$ is an $(\mathcal{F}_{t})$-Wiener process
under $\Prb$ and
\[
\Prb\circ\biggl( W + \int_{0}^{\cdot}u(s)\mrmd s
\biggr)^{-1} = \gamma\circ\biggl( \tilde{Y} + \int_{0}^{\cdot}u(s)\mrmd s
\biggr)^{-1} = \gamma\circ Y^{-1} = \gamma
\]
since $Y$ is the identity on $\mathcal{Y} = \Omega$. Finally, by
(\ref{EqLemmaREWienerEst}),
\[
R (\gamma\| \gamma_{0} ) \geq\Mean\biggl[ \frac{1}{2} \int
_{0}^{T} \bigl|u(s)\bigr|^{2}\mrmd s \biggr],
\]
where expectation is taken with respect to $\Prb= \gamma$.
\end{pf*}

\begin{rem}
Lemma \ref{LemmaREWiener} allows to derive a version of Theorem 3.1 in
Bou{\'e} and Dupuis \cite{bouedupuis98}, the representation theorem
for Laplace
functionals with respect to a Wiener process. The starting point here
as there is the following abstract representation formula for Laplace
functionals (Proposition 1.4.2 in Dupuis and Ellis \cite
{dupuisellis97}, page 27). Let
$\mathcal{S}$ be a Polish space, $\nu\in\mathcal{P}(\mathcal{S})$. Then
for all $f\dvtx\mathcal{S} \rightarrow\mathbb{R}$ bounded and
measurable,
%
\begin{equation}
\label{EqRELaplace} -\log\int_{\mathcal{S}} \RMe^{-f(x)} \nu(\mrmdd x)
= \inf_{\mu\in\mathcal
{P}(\mathcal{S})} \biggl\{ R ( \mu\| \nu) + \int
_{\mathcal
{S}} f(x)\mu(\mrmdd x) \biggr\}.
\end{equation}
With $\mathcal{S} = \mathcal{Y}$, $\nu= \gamma_{0}$ Wiener measure
as above, (\ref{EqRELaplace}) and Lemma \ref{LemmaREWiener}
imply that
\begin{eqnarray*}
&&-\log\int_{\mathcal{Y}} \RMe^{-f(y)} \gamma_{0}(\mrmdd y)
\\
&&\quad= \inf_{\gamma\in\mathcal{P}(\mathcal{Y})} \biggl\{ \inf_{u\in
\mathcal{U}: \Law(\bar{Y}^{u}) = \gamma} \Mean
\biggl[\frac{1}{2} \int_{0}^{T}
\bigl|u(t)\bigr|^{2}\mrmd t \biggr] + \int_{\mathcal{Y}} f(y)\gamma(\mrmdd y)
\biggr\}
\\
&&\quad= \inf_{\gamma\in\mathcal{P}(\mathcal{Y})} \inf_{u\in\mathcal{U}:
\Law(\bar{Y}^{u}) = \gamma} \Mean\biggl[
\frac{1}{2} \int_{0}^{T}
\bigl|u(t)\bigr|^{2}\mrmd t + f \bigl(\bar{Y}^{u} \bigr) \biggr]
\\
&&\quad= \inf_{u\in\mathcal{U}} \Mean\biggl[\frac{1}{2} \int
_{0}^{T} \bigl|u(t)\bigr|^{2}\mrmd t + f \bigl(
\bar{Y}^{u} \bigr) \biggr].
\end{eqnarray*}
Let $\hat{W}$ be a standard $d$-dimensional Wiener process over time
$[0,T]$ defined on some probability space $(\hat{\Omega},\hat
{\mathcal{F}},\hat{\Prb})$. Since $\int_{\mathcal{Y}} \RMe^{-f(y)}
\gamma_{0}(\mrmdd y) = \Mean_{\hat{\Prb}} [ \RMe^{-f(W)} ]$, it
follows that for all $f\dvtx\mathcal{S} \rightarrow\mathbb{R}$
bounded and measurable,
%
\begin{equation}
\label{EqRELaplaceWiener} -\log\Mean_{\hat{\Prb}} \bigl[ \RMe^{-f(W)}
\bigr] = \inf_{u\in
\mathcal{U}} \Mean\biggl[\frac{1}{2} \int
_{0}^{T} \bigl|u(t)\bigr|^{2}\mrmd t + f \bigl(
\bar{Y}^{u} \bigr) \biggr].
\end{equation}
The difference with the formula as stated in Bou{\'e} and Dupuis \cite
{bouedupuis98} lies in the fact that the control processes there all
live on the canonical space and are adapted to the canonical
filtration, while here the stochastic bases for the control processes
may vary; also cf. the related representation formula in Budhiraja and
Dupuis \cite{budhirajadupuis00}, where the control processes are
allowed to be adapted to filtrations larger than that induced by the
driving Wiener process.
\end{rem}

\section{Sufficient conditions for hypotheses (H1)--(H5)} \label{AppItoSuff}

As in Section \ref{SectIto}, let $b$, $\sigma$ be predictable
functionals on $[0,T]\times\mathcal{X}\times\mathcal{P}(\mathbb
{R}^{d})$ with values in $\mathbb{R}^{d}$ and $\mathbb{R}^{d\times
d_{1}}$, respectively. Let $d_{bL}$ be the bounded Lipschitz metric on
$\mathcal{P}(\mathbb{R}^{d})$, that is,
\[
d_{bL}(\nu,\tilde{\nu})\doteq\sup\biggl\{ \int_{\mathbb{R}^{d}}
f(x)\nu(\mrmdd x) - \int_{\mathbb{R}^{d}} f(x)\tilde{\nu}(\mrmdd x)\dvt\|f\|
_{bL}\leq1 \biggr\},
\]
where $\|\cdot\|_{bL}$ is defined for functions $f\dvtx\mathbb
{R}^{d}\rightarrow\mathbb{R}$ by
\[
\|f\|_{bL}\doteq\sup_{x\in\mathbb{R}^{d}} \bigl|f(x)\bigr| + \sup
_{x,y\in
\mathbb{R}^{d}: x\neq y} \frac{|f(x)-f(y)|}{|x-y|}.
\]
If $X$, $Y$ are two $\mathbb{R}^{d}$-valued random variables defined
on the same probability space, then
\[
d_{bL} \bigl(\Law(X),\Law(Y) \bigr) \leq\Mean\bigl[ |X-Y| \bigr].
\]

Consider the following local Lipschitz and growth conditions on $b$,
$\sigma$.
\begin{longlist}[(G)]
\item[(L)] For every $M\in\mathbb{N}$ there exists $L_{M} > 0$ such
that for all $t\in[0,T]$, all $\phi,\tilde{\phi}\in\mathcal{X}$,
all $\nu,\tilde{\nu}\in\mathcal{P}(\mathbb{R}^{d})$,
\begin{eqnarray*}
&&\bigl\llvert b(t,\phi,\nu) - b(t,\tilde{\phi},\tilde{\nu})\bigr\rrvert
+ \bigl
\llvert\sigma(t,\phi,\nu) - \sigma(t,\tilde{\phi},\tilde{\nu})\bigr
\rrvert
\\
&&\quad\leq L_{M} \Bigl(\sup_{s\in[0,t]} \bigl|\phi(s)-\tilde{
\phi}(s)\bigr| + d_{bL}(\nu,\tilde{\nu}) \Bigr),
\end{eqnarray*}
whenever $\sup_{s\in[0,t]} |\phi(s)|\vee|\tilde{\phi}(s)| \leq M$.

\item[(G)] There exist a constant $K > 0$ such that for all $t\in
[0,T]$, all $\phi\in\mathcal{X}$, all $\nu\in\mathcal{P}(\mathbb
{R}^{d})$,
\[
\bigl|b(t,\phi,\nu)\bigr| \leq K \Bigl( 1 + \sup_{s\in[0,t]}\bigl|\phi(s)\bigr|
\Bigr),\qquad
\bigl|\sigma(t,\phi,\nu)\bigr| \leq K.
\]
\end{longlist}
The boundedness condition on $\sigma$ is used only in the verification
of hypothesis
(H4).

\begin{prop} \label{PropLimitPathwiseUnique}
Grant condition \textup{(L)}. Let $((\Omega,\mathcal{F},\Prb),(\mathcal
{F}_{t}),u,W)\in\mathcal{U}$. Suppose that $X$, $\tilde{X}$ are
solutions of (\ref{EqSDELimitControl}) over the time interval
$[0,T]$ under control $u$ with initial condition $X(0) = \tilde{X}(0)$
$\Prb$-almost surely. Then $X$, $\tilde{X}$ are indistinguishable,
that is,
\[
\Prb\bigl( X(t) = \tilde{X}(t) \mbox{ for all } t\in[0,T] \bigr) = 1.
\]
\end{prop}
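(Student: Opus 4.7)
The approach is a standard localization plus Gronwall argument, adapted to the measure-dependent coefficients of Equation~\eqref{EqSDELimitControl}. First, I would introduce the stopping time
\[
\rho_M \doteq \inf\!\Bigl\{t\geq 0 : |X(t)|\vee|\tilde{X}(t)|>M \text{ or } \int_0^t |u(s)|^2\,ds > M\Bigr\}\wedge T,
\]
which simultaneously controls the trajectories and the accumulated control energy. Since $X,\tilde{X}$ are continuous on $[0,T]$ and $u\in\mathcal{U}$, $\rho_M\nearrow T$ $\Prb$-almost surely as $M\to\infty$. I would then subtract the integral forms of Equation~\eqref{EqSDELimitControl} for $X$ and $\tilde{X}$ (they share the common pair $(u,W)$) and analyse the difference on $[0,t\wedge\rho_M]$.

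Next, take the squared supremum and expectation, and estimate the three difference-integrals. For the stochastic integral, Doob's $L^2$ maximal inequality reduces matters to $\Mean[\int_0^{t\wedge\rho_M}|\Delta\sigma(s)|^2\,ds]$. For the controlled drift $\int\Delta\sigma\cdot u\,ds$, Cauchy-Schwarz combined with $\int_0^{\rho_M}|u|^2\,ds\leq M$ gives a bound by $M\cdot\Mean[\int_0^{t\wedge\rho_M}|\Delta\sigma|^2\,ds]$. The ordinary drift difference is handled similarly by Cauchy-Schwarz in time. On $\{s\leq\rho_M\}$, condition~(L) supplies
\[
|\Delta b(s)|+|\Delta\sigma(s)| \leq L_M\Bigl(\sup_{r\leq s}|X(r)-\tilde{X}(r)| + d_{bL}\bigl(\Law(X(s)),\Law(\tilde{X}(s))\bigr)\Bigr).
\]
Writing $\phi(t)\doteq\Mean\bigl[\sup_{r\leq t\wedge\rho_M}|X(r)-\tilde{X}(r)|^2\bigr]$, these estimates combine to give
\[
\phi(t) \leq C_M \int_0^t \Bigl[\phi(s) + d_{bL}\bigl(\Law(X(s)),\Law(\tilde{X}(s))\bigr)^2\Bigr]\,ds,
\]
where $C_M$ depends on $M$, $L_M$, $T$, and the Doob constant.

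The principal step is handling the nonlocal measure term. Because $X$ and $\tilde{X}$ are defined on the common probability space, they realize a coupling of the laws $\Law(X(s))$ and $\Law(\tilde{X}(s))$: from $\|f\|_{bL}\leq 1 \Rightarrow |f(x)-f(y)|\leq |x-y|\wedge 2$ one obtains $d_{bL}(\Law(X(s)),\Law(\tilde{X}(s)))\leq \Mean[|X(s)-\tilde{X}(s)|\wedge 2]$. Splitting this expectation on $\{\rho_M\geq s\}$ versus its complement, and using that on the former $|X(s)-\tilde{X}(s)|^2\leq \sup_{r\leq s\wedge\rho_M}|X(r)-\tilde{X}(r)|^2$, one is led to
\[
d_{bL}\bigl(\Law(X(s)),\Law(\tilde{X}(s))\bigr)^2 \leq \phi(s) + 4\,\Prb(\rho_M<T).
\]
Substituting back and applying Gronwall yields $\phi(T)\leq C'_M\,\Prb(\rho_M<T)$. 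The main obstacle is closing the limit $M\to\infty$: the prefactor $C'_M$ grows exponentially in $L_M$, whereas $\Prb(\rho_M<T)$ need not decay correspondingly fast under (L) alone. A clean way out is to iterate the Gronwall estimate on sub-intervals $[k\delta,(k+1)\delta]$ of length $\delta$ chosen small enough (depending on $M$) that the prefactor is harmless, propagating the equality $X(k\delta)=\tilde{X}(k\delta)$ one sub-interval at a time from the initial condition $X(0)=\tilde{X}(0)$. Together with the continuity of $X,\tilde{X}$ and $\rho_M\to T$ $\Prb$-a.s., this produces $\Prb(X(t)=\tilde{X}(t)\text{ for all }t\in[0,T])=1$.
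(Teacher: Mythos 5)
Your setup coincides with the paper's own proof: the same stopping time (the paper's $\tau_M$ is your $\rho_M$), the same decomposition of the difference of the two integral equations into three terms, the same use of H\"older's inequality, Doob's maximal inequality and the It\^o isometry, and the same appeal to condition (L), leading to a Gronwall-type inequality for $\phi(t)=\Mean\bigl[\sup_{r\le t}|X(r\wedge\rho_M)-\tilde{X}(r\wedge\rho_M)|^2\bigr]$. The divergence is in how the nonlocal term $d_{bL}(\Law(X(s)),\Law(\tilde{X}(s)))^2$ is fed back into that inequality. The paper bounds it directly by $\Mean\bigl[\sup_{r\le s}|X(r\wedge\tau_M)-\tilde{X}(r\wedge\tau_M)|^2\bigr]$, so that the integral inequality is homogeneous and Gronwall yields $\phi\equiv 0$ exactly, for every fixed $M$; letting $M\to\infty$ then finishes the proof. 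You instead split off the event $\{\rho_M<s\}$ and retain the residual $4\,\Prb(\rho_M<T)$ — a legitimate concern, since $\Law(X(s))$ is the law of the unstopped variable — but it is precisely this residual that your argument cannot subsequently remove.

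The gap is in your final step. After Gronwall you have $\phi(T)\le C_M'\,\Prb(\rho_M<T)$ with $C_M'$ of order $C_M e^{cC_MT}$, and, as you yourself observe, the product need not vanish as $M\to\infty$. The subinterval iteration does not repair this: on the very first subinterval $[0,\delta]$, starting from $\phi(0)=0$, Gronwall gives only $\phi(\delta)\le 4C_M\delta\,\Prb(\rho_M<T)\,e^{C_M\delta}$, which is strictly positive for fixed $M$ whenever $\Prb(\rho_M<T)>0$. Hence the statement ``$X(\delta)=\tilde{X}(\delta)$ almost surely'' that you need in order to propagate equality to the next subinterval is simply not available, and summing the per-interval errors over the $T/\delta$ subintervals reproduces (indeed worsens) the global bound. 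Shrinking $\delta$ tames the exponential factor $e^{C_M\delta}$ but does nothing to the inhomogeneous term, which is the actual obstruction. To close the argument along the paper's lines you must keep the integral inequality homogeneous, i.e.\ dominate $d_{bL}(\Law(X(s)),\Law(\tilde{X}(s)))^2$ by the localized quantity $\phi(s)$ itself before applying Gronwall, as the paper does (via $d_{bL}(\nu,\tilde\nu)\le\Mean[|X(s)-\tilde{X}(s)|]$ and Jensen), so that Gronwall returns exactly zero for each fixed $M$; only then does $\rho_M\nearrow T$ give indistinguishability. As written, your proof establishes a quantitative bound that does not pass to the limit, so the conclusion is not reached.
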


\begin{pf}
For $M\in\mathbb{N,}$ define an $(\mathcal{F}_{t})$-stopping time
$\tau_M$ by
\[
\tau_{M}(\omega)\doteq\inf\biggl\{ t\in[0,T]\dvt\bigl|X(t,\omega)\bigr|\vee\bigl|
\tilde{X}(t,\omega)\bigr|\vee\int_{0}^{t} \bigl|u(s,
\omega)\bigr|^{2}\mrmd s \geq M \biggr\}
\]
with $\inf\varnothing= \infty$. Observe that $\Prb( \tau_{M} \leq T )
\to0$ as $M\to\infty$ since $X$, $\tilde{X}$ are continuous processes
and $\Mean[ \int_{0}^{T}|u(s)|^{2}\mrmd s ] < \infty$. Set
$\theta(t)\doteq\Law(X(t))$, $\tilde
{\theta}(t)\doteq\Law(\tilde{X}(t))$, $t\in[0,T]$. Using H{\"o}lder's
inequality, Doob's maximal inequality, the It{\^o} isometry, and
condition (L), we obtain for $M\in\mathbb{N}$, all $t\in[0,T]$,
\begin{eqnarray*}
&&\Mean\Bigl[\sup_{s\in[0,t]} \bigl\llvert X(s\wedge
\tau_{M}) - \tilde{X}(s\wedge\tau_{M})\bigr\rrvert
^2 \Bigr]
\\
&&\quad\leq4T \Mean\biggl[ \int_{0}^{t\wedge\tau_{M}}\bigl\llvert b
\bigl(r,X,\theta(r) \bigr) - b \bigl(r,\tilde{X},\tilde{\theta}(r) \bigr
)\bigr
\rrvert^{2} \mrmd r \biggr]
\\
&&\qquad{} + 4\Mean\biggl[ \int_{0}^{t\wedge\tau_{M}} \bigl\llvert
\sigma\bigl(r,X,\theta(r) \bigr) - \sigma\bigl(r,\tilde{X},\tilde
{\theta}(r)
\bigr) \bigr\rrvert^{2}\mrmd r \cdot\int_{0}^{t\wedge\tau
_{M}}\bigl|u(r)\bigr|^{2}\mrmd r
\biggr]
\\
&&\qquad{} + 16\Mean\biggl[ \int_{0}^{t\wedge\tau_{M}}\bigl\llvert
\sigma\bigl(r,X,\theta(r) \bigr) - \sigma\bigl(r,\tilde{X},\tilde
{\theta}(r)
\bigr)\bigr\rrvert^{2} \mrmd r \biggr]
\\
&&\quad\leq4T \Mean\biggl[ \int_{0}^{t\wedge\tau_{M}}\bigl\llvert b
\bigl(r,X,\theta(r) \bigr) - b \bigl(r,\tilde{X},\tilde{\theta}(r) \bigr
)\bigr
\rrvert^{2} \mrmd r \biggr]
\\
&&\qquad{} + (4M + 16 )\Mean\biggl[ \int_{0}^{t\wedge\tau
_{M}} \bigl
\llvert\sigma\bigl(r,X,\theta(r) \bigr) - \sigma\bigl(r,\tilde
{X},\tilde{
\theta}(r) \bigr) \bigr\rrvert^{2}\mrmd r \biggr]
\\
&&\quad\leq8L^{2}_{M} (T + M + 4 ) \Mean\biggl[ \int
_{0}^{t\wedge\tau_{M}} \Bigl(\sup_{s\in[0,r]}
\bigl|X(s)-\tilde{X}(s)\bigr|^{2} + d_{bL}\bigl(\theta(r),\tilde{
\theta}(r)\bigr)^{2} \Bigr) \mrmd r \biggr]
\\
&&\quad\leq16L^{2}_{M} (T + M + 4 ) \int_{0}^{t}
\Mean\Bigl[ \sup_{s\in[0,r]} \bigl\llvert X(s\wedge
\tau_{M})-\tilde{X}(s\wedge\tau_{M})\bigr\rrvert
^{2} \Bigr]\mrmd r.
\end{eqnarray*}
An application of Gronwall's lemma yields that
\[
\Mean\Bigl[\sup_{s\in[0,T]} \bigl\llvert X(s\wedge
\tau_{M}) - \tilde{X}(s\wedge\tau_{M})\bigr\rrvert
^2 \Bigr] = 0,
\]
hence $\Prb(X(t) = \tilde{X}(t)$ for all $t\leq\tau_{M}) =
1$ for all $M\in\mathbb{N}$. This implies the assertion since $\tau
_{M} \nearrow\infty$ as $M\to\infty$ $\Prb$-almost surely.
\end{pf}

Proposition \ref{PropLimitPathwiseUnique} says that under condition
(L) pathwise uniqueness holds for (\ref{EqSDELimitControl})
with respect to $\mathcal{U}$. As in the classical case of
uncontrolled It{\^o} diffusions, pathwise uniqueness implies uniqueness
in law. The proof of Proposition \ref{PropLimitWeakUnique} below is in
fact analogous to that of Proposition 1 in Yamada and Watanabe \cite
{yamadawatanabe71}; also cf. Proposition 5.3.20 in
Karatzas and Shreve \cite{karatzasshreve91}, page 309.

\begin{prop} \label{PropLimitWeakUnique}
Assume that pathwise uniqueness holds for (\ref{EqSDELimitControl})
given any deterministic initial condition. Let
$((\Omega,\mathcal{F},\Prb),(\mathcal{F}_{t}),u,W)\in\mathcal {U}$,
$((\tilde{\Omega},\tilde{\mathcal{F}},\tilde{\Prb}),(\tilde
{\mathcal{F}}_{t}),\tilde{u},\tilde{W})\in\mathcal{U}$ be such that
$\Prb\circ(u,W)^{-1} = \tilde{\Prb}\circ(\tilde{u},\tilde {W})^{-1}$ as
probability measures on $\mathcal{B}(\mathcal
{R}_{1}\times\mathcal{Y})$. Suppose that $X$, $\tilde{X}$ are solutions
of (\ref{EqSDELimitControl}) over the time interval $[0,T]$
under control $u$ and $\tilde{u}$, respectively, with initial condition
$x_{0}$ $\Prb$/$\tilde{\Prb}$-almost surely. Then $\Prb
\circ(X,u,W)^{-1} = \tilde{\Prb}\circ(\tilde{X},\tilde{u},\tilde
{W})^{-1}$ as probability measures on $\mathcal{B}(\mathcal{Z})$.
\end{prop}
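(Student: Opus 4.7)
My plan is to follow the classical Yamada--Watanabe argument (as used in \citet{yamadawatanabe71} and Proposition~5.3.20 in \citet{karatzasshreve91}), adapted to the present controlled setting. Let $Q \doteq \Prb \circ (X,u,W)^{-1}$ and $\tilde{Q} \doteq \tilde{\Prb} \circ (\tilde{X},\tilde{u},\tilde{W})^{-1}$, viewed as probability measures on $\Borel{\mathcal{Z}}$ with $\mathcal{Z} = \mathcal{X}\times\mathcal{R}_{1}\times\mathcal{Y}$. By hypothesis, both measures project to the same probability measure $\lambda$ on $\Borel{\mathcal{R}_{1}\times\mathcal{Y}}$. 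Since all spaces involved are Polish, regular conditional probabilities exist, so we may write
\begin{equation*}
	Q(d\phi\times dr\times dw) = Q_{(r,w)}(d\phi)\,\lambda(dr\times dw), \qquad \tilde{Q}(d\phi\times dr\times dw) = \tilde{Q}_{(r,w)}(d\phi)\,\lambda(dr\times dw).
\end{equation*}

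Next, I would build a new probability space on which copies of both solutions live over a common control/noise pair. Set $\hat{\Omega} \doteq \mathcal{X}\times\mathcal{X}\times\mathcal{R}_{1}\times\mathcal{Y}$, equipped with its Borel $\sigma$-algebra and the measure
\begin{equation*}
	\hat{\Prb}(d\phi\times d\tilde{\phi}\times dr\times dw) \doteq Q_{(r,w)}(d\phi)\,\tilde{Q}_{(r,w)}(d\tilde{\phi})\,\lambda(dr\times dw).
\end{equation*}
Let $(\hat{X},\hat{\tilde{X}},\hat{u},\hat{W})$ be the coordinate processes (with $\hat{u}$ recovered from the $\mathcal{R}_{1}$-coordinate in the standard way). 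The marginal law of $(\hat{X},\hat{u},\hat{W})$ is $Q$, and that of $(\hat{\tilde{X}},\hat{u},\hat{W})$ is $\tilde{Q}$, while the two solutions share the same driving Wiener process $\hat{W}$ and the same control $\hat{u}$. I would then equip $\hat{\Omega}$ with the filtration $(\hat{\mathcal{F}}_{t})$ obtained as the right-continuous, $\hat{\Prb}$-completed version of $\sigma(\hat{X}(s),\hat{\tilde{X}}(s),\hat{u}\restriction_{[0,s]},\hat{W}(s):s\leq t)$.

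The technical heart of the argument is to verify that on this enlarged stochastic basis, $\hat{W}$ remains an $(\hat{\mathcal{F}}_{t})$-Wiener process and that both $\hat{X}$ and $\hat{\tilde{X}}$ satisfy Equation~\eqref{EqSDELimitControl} driven by $\hat{W}$ under control $\hat{u}$, each with initial value $x_{0}$. This is the standard (but delicate) point in Yamada--Watanabe-type proofs: one checks that the conditional independence built into the product-over-$(r,w)$ construction ensures that $\hat{W}(t)-\hat{W}(s)$ remains independent of $\hat{\mathcal{F}}_{s}$, which one verifies by a monotone class / characteristic function calculation together with the martingale property of $\hat{W}$ under the original filtrations; simultaneously, the integral form of \eqref{EqSDELimitControl} is preserved under the enlargement of the filtration because all stochastic integrals involved are determined by the marginal laws $Q$ and $\tilde{Q}$. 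I expect this filtration step to be the main obstacle: it is largely routine but must be carried out carefully because $\hat{\mathcal{F}}_{t}$ combines information from two different solution paths.

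Once both $\hat{X}$ and $\hat{\tilde{X}}$ are known to solve \eqref{EqSDELimitControl} with respect to the common basis, control, noise, and initial datum $x_{0}$, the assumed pathwise uniqueness yields $\hat{\Prb}(\hat{X}(t)=\hat{\tilde{X}}(t) \text{ for all } t\in[0,T])=1$. Consequently the image of $\hat{\Prb}$ under $(\phi,\tilde{\phi},r,w)\mapsto(\phi,r,w)$ coincides with its image under $(\phi,\tilde{\phi},r,w)\mapsto(\tilde{\phi},r,w)$, which is exactly $Q=\tilde{Q}$, proving the proposition.
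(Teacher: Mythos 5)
Your proposal is correct and follows essentially the same route as the paper's own (sketched) proof: both construct the product space $\mathcal{X}\times\mathcal{X}\times\mathcal{R}_{1}\times\mathcal{Y}$, glue the two solutions over the common law of $(u,W)$ via a product of regular conditional distributions, verify that both coordinates solve \eqref{EqSDELimitControl} on the enlarged basis, and invoke pathwise uniqueness. The filtration/Wiener-property verification you flag as the delicate step is likewise left at the level of ``one checks that'' in the paper, so your level of detail matches the original.
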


\begin{pf*}{Proof (sketch)}
Set $\hat{\mathcal{Z}}\doteq\mathcal{X}\times\mathcal{X}\times
\mathcal{R}_{1}\times\mathcal{Y}$ and $\mathcal{G}\doteq\mathcal
{B}(\hat{\mathcal{Z}})$. Let $\hat{Z} = (Z, \tilde{Z}, \rho, \hat
{W})$ be the canonical process on $\hat{\mathcal{Z}}$, and let
$(\mathcal{G}_{t})_{t\in[0,T]}$ be the canonical filtration (i.e.,
the natural filtration of $Z$). Let $\mathbf{R}$ be the probability
measure on $\mathcal{B}(\mathcal{R}_{1}\times\mathcal{Y})$ given by
\[
\mathbf{R}\doteq\Prb\circ(u,W)^{-1} = \tilde{\Prb}\circ(\tilde{u},
\tilde{W})^{-1}.
\]
Let $\mathbf{Q}\dvt\mathcal{R}_{1}\times\mathcal{Y}\times\mathcal
{B}(\mathcal{X})$ be a regular conditional distribution of $\Law(X,u,W)$
given $(u,W)$; thus for all $A\in\mathcal{B}(\mathcal{R}_{1}\times
\mathcal{Y})$, all $B\in\mathcal{B}(\mathcal{X})$,
\[
\Prb\bigl(X\in B, (u,W)\in A \bigr) = \int_{A} \mathbf
{Q}(r,w;B) \mathbf{R} \bigl(\mathrm{d}(r,w) \bigr).
\]
Analogously, let $\tilde{\mathbf{Q}}\dvt\mathcal{R}_{1}\times
\mathcal{Y}\times\mathcal{B}(\mathcal{X})$ be a regular conditional
distribution of $\Law(\tilde{X},\tilde{u},\tilde{W})$ given
$(\tilde{u},\tilde{W})$. Define $\hat{\Prb}\in\mathcal{P}(\hat
{\mathcal{Z}})$ by setting, for $B,\tilde{B}\in\mathcal
{B}(\mathcal{X})$, $A\in\mathcal{B}(\mathcal{R}_{1}\times\mathcal{Y})$,
\[
\hat{\Prb} (B\times\tilde{B}\times A ) \doteq\int_{A}
\mathbf{Q}(r,w;B)\cdot\tilde{\mathbf{Q}}(r,w;\tilde{B}) \mathbf{R}
\bigl(\mathrm{d}(r,w)
\bigr).
\]
Let $\hat{\mathcal{G}}$ be the $\hat{\Prb}$-completion of $\mathcal
{G}$, and denote by $(\hat{\mathcal{G}}_{t})$ the right continuous
filtration induced by the $\hat{\Prb}$-augmentation of $(\mathcal
{G}_{t})$. Then $((\hat{\mathcal{Z}},\hat{\mathcal{G}},\hat{\Prb
}),(\hat{\mathcal{G}}_{t}),\rho,\hat{W}) \in\mathcal{U}$, where
$(\rho,\hat{W})$ are the last two components of the canonical process
$\hat{Z}$. One checks that
\[
\hat{\Prb}\circ(Z,\rho,\hat{W})^{-1} = \Prb\circ(X,u,W)^{-1},\qquad
\hat{\Prb}\circ(\tilde{Z},\rho,\hat{W})^{-1} = \tilde{\Prb}\circ(
\tilde{X},\tilde{u},\tilde{W})^{-1}
\]
and that $Z$, $\tilde{Z}$ are solutions of (\ref{EqSDELimitControl})
over the time interval $[0,T]$ under control
$((\hat{\mathcal{Z}},\hat{\mathcal{G}},\hat{\Prb}),(\hat
{\mathcal{G}}_{t}),\allowbreak\rho,\hat{W}) \in\mathcal{U}$ with initial condition
$x_{0}$ $\hat{\Prb}$-almost surely, where $\rho$ is being identified
with the control process $v(t)\doteq\int_{\mathbb {R}^{d_{1}}}y
\rho_{t}(\mrmdd y)$. By hypothesis, pathwise uniqueness holds for
(\ref{EqSDELimitControl}) with deterministic initial condition; it
follows that
\[
\hat{\Prb} \bigl( Z(t) = \tilde{Z}(t) \mbox{ for all }t\in[0,T] \bigr)
= 1,
\]
which implies $\Prb\circ(X,u,W)^{-1} = \tilde{\Prb}\circ(\tilde
{X},\tilde{u},\tilde{W})^{-1}$.
\end{pf*}

The following lemma is used in the verification of hypothesis (H4).

\begin{lemma} \label{LemmaModulusContinuity}
Let $(\Omega,\mathcal{F},\Prb),(\mathcal{F}_{t}))$ be a stochastic
basis satisfying the usual hypotheses, and let $M$ be a continuous
local martingale with respect to $(\mathcal{F}_{t})$ with quadratic
variation $\langle M\rangle$. Suppose there exists a finite constant
$C > 0$ such that for $\Prb$-almost all $\omega\in\Omega$, all
$t,s\in[0,T]$,
\[
\bigl\llvert\langle M\rangle(t,\omega) - \langle M\rangle(s,\omega)
\bigr
\rrvert\leq C\cdot|t-s|.
\]
Then for every $\delta_{0}\in(0,T]$,
\[
\Mean\Bigl[\sup_{\delta\in(0,\delta_{0}]} \delta^{-1/4}\cdot\sup
_{t,s\in[0,T]: |t-s|\leq\delta} \bigl\llvert M(t) - M(s)\bigr\rrvert
\Bigr] \leq192\cdot
\sqrt{C}\cdot(e\cdot T)^{1/4}.
\]
\end{lemma}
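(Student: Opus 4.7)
First, I reduce to $\delta_0=T$ (the quantity on the left is nondecreasing in $\delta_0$) and, by translation invariance of $\omega_M(\delta):=\sup_{|t-s|\leq\delta}|M(t)-M(s)|$, I may assume $M(0)=0$; the Lipschitz hypothesis then yields $\Mean[\langle M\rangle(T)]\leq CT<\infty$, so $M$ is a genuine square-integrable martingale. The plan is a dyadic chaining driven by an $L^{2k}$-moment estimate from Burkholder--Davis--Gundy.

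By BDG for continuous martingales applied to the increment $u\mapsto M(s+u)-M(s)$, whose quadratic variation is $\langle M\rangle(s+u)-\langle M\rangle(s)\leq Cu$, one obtains $\Mean[|M(t)-M(s)|^{2k}]\leq c_{2k}\,C^k(t-s)^k$ for every integer $k\geq 1$, with an explicit constant $c_{2k}$ (for example $c_{2k}=(2k-1)!!$ via the Dambis--Dubins--Schwarz representation of $M$ as a time-changed Brownian motion). For each $n\geq 0$, partition $[0,T]$ dyadically into $2^n$ equal pieces of length $\Delta_n=T/2^n$ with nodes $t_j^{(n)}=j\Delta_n$, and set
\[
Y_n := \max_{0\leq j<2^n}\sup_{u\in[t_j^{(n)},t_{j+1}^{(n)}]}\bigl|M(u)-M(t_j^{(n)})\bigr|.
\]
Doob's $L^{2k}$ maximal inequality applied to each shifted martingale, combined with the bound $\Mean[\max_j X_j^{2k}]\leq\sum_j\Mean[X_j^{2k}]$, yields $\Mean[Y_n^{2k}]\leq 2^n\,(2k/(2k-1))^{2k}\,c_{2k}\,(C\Delta_n)^k$.

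A standard two-adjacent-intervals argument shows $\omega_M(\delta)\leq 2 Y_n$ for any $\delta\leq\Delta_n$. Since the intervals $(\Delta_{n+1},\Delta_n]$, $n\geq 0$, partition $(0,T]$ and $\delta^{-1/4}\leq 2^{(n+1)/4}T^{-1/4}$ on each of them, bounding the supremum by the sum of nonnegative terms gives
\[
\sup_{\delta\in(0,T]}\delta^{-1/4}\omega_M(\delta)\;\leq\;2^{5/4}\,T^{-1/4}\sum_{n\geq 0}2^{n/4}\,Y_n.
\]
Taking expectations and applying Jensen's inequality $\Mean[Y_n]\leq\Mean[Y_n^{2k}]^{1/(2k)}$ reduces the task to summing a geometric series with ratio $2^{-((k-1)/(2k)-1/4)}$; this series converges precisely when $k\geq 3$, so the plan is to fix such a $k$ (for instance $k=4$) and carry out the arithmetic, which produces a bound of the required form $K\sqrt{C}\,T^{1/4}$.

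The main obstacle is isolating the specific constant $192(e)^{1/4}$ rather than some generic $K$. The factor $192$ is obtained by carefully tracking the Doob, BDG and geometric-sum constants simultaneously; the $(e)^{1/4}$ most plausibly arises from an optimisation over the integer moment $k$ through the limit $(p/(p-1))^p\to e$ present in Doob's constant, or, equivalently, by replacing the fixed-moment estimate with a sub-Gaussian chaining based on the exponential supermartingale $\exp(\lambda M-\lambda^2\langle M\rangle/2)$ (which is a supermartingale here by the Lipschitz bound on $\langle M\rangle$), producing a $\sqrt{\log(T/\delta)}$-type modulus to integrate against $\delta^{-1/4}$ with the optimal $\delta\sim T/e$.
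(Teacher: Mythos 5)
Your proposal takes a genuinely different route from the paper, and it contains a real gap. The paper's proof is short: it time-changes $M$ into a Brownian motion $W$ via the Dambis--Dubins--Schwarz theorem, so that $M(t)=W(\langle M\rangle(t))$, and then invokes a Garsia--Rodemich--Rumsey-based modulus-of-continuity estimate (from the appendix of \citet{fischernappo10}): for every $\tilde{T}>0$ there is a random variable $\xi_{1,\tilde{T}}$ with $\Mean[\xi_{1,\tilde{T}}]\leq 192$ such that $|W(t)-W(s)|\leq \xi_{1,\tilde{T}}\sqrt{|t-s|\log(\tilde{T}/|t-s|)}$ whenever $|t-s|\leq \tilde{T}/e$. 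Choosing $\tilde{T}=e\cdot C\cdot T$ (so that the constraint $|\langle M\rangle(t)-\langle M\rangle(s)|\leq CT\leq\tilde{T}/e$ is automatic) and using $\sqrt{x\log(\tilde{T}/x)}\leq(\tilde{T}x)^{1/4}$ on $(0,\tilde{T}/e]$ gives $\sup_{|t-s|\leq\delta}|M(t)-M(s)|\leq\sqrt{C}\,\xi_{1,eCT}\,(eT\delta)^{1/4}$ pathwise, and the lemma follows by taking expectations. This is exactly where both $192$ and $(e\cdot T)^{1/4}$ come from; your closing speculation that $e^{1/4}$ arises from optimizing Doob's constant is not where the paper gets it.

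The gap in your write-up is that the lemma asserts a specific numerical bound and you never establish it: you explicitly defer ``isolating the specific constant'' as ``the main obstacle.'' A statement with an explicit constant is not proved until the constant is verified. The good news is that your chaining skeleton is sound and the arithmetic does close. The moment bound $\Mean[|M(t)-M(s)|^{2k}]\leq(2k-1)!!\,C^{k}(t-s)^{k}$ is correct here (cleanest via It{\^o}'s formula and induction using $d\langle M\rangle\leq C\,dt$), Doob plus the union bound controls $Y_{n}$, and the series converges for $k\geq 3$ as you say. Two corrections are needed, though. First, with your definition of $Y_{n}$ the two-adjacent-intervals step gives $\omega_{M}(\delta)\leq 3Y_{n}$, not $2Y_{n}$: crossing from $s\in[t_{j}^{(n)},t_{j+1}^{(n)}]$ to the common node $t_{j+1}^{(n)}$ costs $|M(s)-M(t_{j}^{(n)})|+|M(t_{j+1}^{(n)})-M(t_{j}^{(n)})|\leq 2Y_{n}$, plus $Y_{n}$ for the second interval. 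Second, carrying out the computation with $k=4$ gives $\Mean[2^{n/4}Y_{n}]\leq\bigl(105\cdot(8/7)^{8}\bigr)^{1/8}\sqrt{CT}\,2^{-n/8}\approx 2.05\sqrt{CT}\,2^{-n/8}$; summing the geometric series ($\approx 12.05$) and multiplying by $3\cdot 2^{1/4}T^{-1/4}$ yields roughly $88\sqrt{C}\,T^{1/4}$, which is indeed below $192\sqrt{C}(eT)^{1/4}\approx 247\sqrt{C}\,T^{1/4}$. So your approach does prove the lemma, but only once this computation — precisely the step you left open — is actually performed.
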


\begin{pf}
Since the assertion is about the behavior of $M$ only up to time $T$,
we may assume that $\lim_{t\to\infty} \langle M\rangle(t) =\infty$
$\Prb$-almost surely. For $s\geq0$ set $\tau_{s}\doteq\inf\{ t\geq
0\dvt\langle M\rangle(t) > s\}$. Then $\tau_{s}$ is an $(\mathcal
{F}_{t})$-stopping time for every $s\geq0$. By the
Dambis--Dubins--Schwarz theorem
(e.g., Theorem~3.4.6 in Karatzas and Shreve \cite{karatzasshreve91},
page 174), setting $W(t,\omega)\doteq M(\tau_{t}(\omega),\omega)$,
$t\geq0$, $\omega\in\Omega$, defines a standard Wiener process with
respect to the filtration $(\mathcal{F}_{\tau_{t}})$ and for $\Prb
$-almost all $\omega\in\Omega$, all $t\geq0$,
\[
M(t,\omega) = W \bigl(\langle M\rangle(t,\omega),\omega\bigr).
\]
Using the Garsia--Rodemich--Rumsey inequality one can show (cf. Appendix
in Fischer and Nappo \cite{fischernappo10}) that for every $p\geq1$,
every $\tilde{T} > 0$, there exists a $p$-integrable random variable
$\xi_{p,\tilde{T}}$ such that $\Mean[ |\xi_{p,\tilde
{T}}|^{p} ] \leq192^{p}\cdot p^{p/2}$ and for $\Prb$-almost
all $\omega\in\Omega$, all $t,s\in[0,T]$ such that $|t-s| \leq
\tilde{T}/e$,
\[
\bigl\llvert W(t,\omega) - W(s,\omega)\bigr\rrvert\leq\xi_{p,\tilde
{T}}(
\omega)\cdot\sqrt{|t-s|\log\biggl(\frac{\tilde
{T}}{|t-s|} \biggr)}.
\]
Clearly,\vspace*{1pt} $x\mapsto x\log(\tilde{T}/x)$ is increasing on $(0,\tilde
{T}/e]$, $\lim_{x\to0+} x\log(\tilde{T}/x) = 0$, and $(x\log
(\tilde{T}/\allowbreak x))^{1/2}\leq(\tilde{T}\cdot x)^{1/4}$ for all $x\in
(0,\tilde{T}/e]$. Since $\langle M\rangle$ is non-decreasing with
$\langle M\rangle(0) =0$ and, by hypothesis, $|\langle M\rangle(t) -
\langle M\rangle(s)| \leq C\cdot|t-s|$, it follows that for $\Prb
$-almost all $\omega\in\Omega$, every $\delta\in(0,T]$,
\begin{eqnarray*}
&&\sup_{t,s\in[0,T]: |t-s|\leq\delta} \bigl\llvert M(t,\omega) -
M(s,\omega)\bigr
\rrvert
\\
&&\quad= \sup_{t,s\in[0,T]: |t-s|\leq\delta} \bigl\llvert W \bigl(\langle
M\rangle(t,
\omega),\omega\bigr) - W \bigl(\langle M\rangle(s,\omega),\omega\bigr
)\bigr
\rrvert
\\
&&\quad\leq\sup_{t,s\in[0,T]: |t-s|\leq\delta} \xi_{p,e\cdot C\cdot
T}(\omega)
\\
&&\hspace*{55pt}\qquad{} \times\sqrt{\bigl|\langle M\rangle(t,\omega)-\langle M\rangle(s,\omega)\bigr|
\log
\biggl(\frac{e\cdot C\cdot T}{|\langle M\rangle
(t,\omega)-\langle M\rangle(s,\omega)|} \biggr)}
\\
&&\quad\leq\sup_{t,s\in[0,T]: |t-s|\leq\delta} \xi_{p,e\cdot C\cdot
T}(\omega)\cdot\sqrt{C}\cdot
\sqrt{\delta\log\biggl(\frac
{e\cdot T}{\delta} \biggr)}
\\
&&\quad\leq\sqrt{C}\cdot\xi_{p,e\cdot C\cdot T}(\omega)\cdot(e\cdot T\cdot
\delta
)^{1/4}.
\end{eqnarray*}
The assertion follows by choosing $p$ equal to one, inserting the term
containing the supremum over $\delta\in(0,\delta_{0}]$, and taking
expectations.
\end{pf}

\begin{prop}
Conditions \textup{(L)} and \textup{(G)} entail hypotheses \textup{(H1)--(H5)}.
\end{prop}

\begin{pf}
Hypothesis (H1)
is an immediate consequence of conditions (L) and (G). To verify
hypothesis \textup{(H2)}, let $N\in\mathbb{N}$ and define functions
$b_{N}\dvtx[0,T]\times\mathcal{X}^{N} \rightarrow\mathbb{R}^{N\times
d}$, $\sigma_{N}\dvtx[0,T]\times\mathcal{X}^{N} \rightarrow\mathbb
{R}^{N\times d\times N\times d_{1}}$ according to
\begin{eqnarray*}
b_{N}(t,\bolds{\phi})&\doteq&{ \bigl(b \bigl(t,\phi
_{1},\mu^{N}_{\bolds{\phi}(t)} \bigr),\ldots,b \bigl(t,\phi
_{N},\mu^{N}_{\bolds{\phi}(t)} \bigr) \bigr)}^\mathsf{T},
\\
\sigma_{N}(t,\bolds{\phi})&\doteq&\diag\bigl(\sigma\bigl(t,
\phi_{1},\mu^{N}_{\bolds{\phi}(t)} \bigr),\ldots,\sigma
\bigl(t,\phi_{N},\mu^{N}_{\bolds{\phi}(t)} \bigr) \bigr),
\end{eqnarray*}
where $\mu^{N}_{\bolds{\phi}(t)}\doteq
\frac{1}{N}\sum_{i=1}^{N}\delta_{\phi_{i}(t)}$. Then $b_{N}$,
$\sigma_{N}$ are the coefficients for the system of $N$ stochastic
differential equations given by (\ref{EqSDEPrelimit}). Thanks to
conditions (L) and (G), $b_{N}$, $\sigma_{N}$ are locally Lipschitz
continuous and of sub-linear growth. The It{\^o} existence and
uniqueness theorem (e.g., Theorem V.12.1
in Rogers and Williams \cite{rogerswilliams00b}, page 132) thus yields
pathwise uniqueness and
existence of strong solutions for the system of equations
(\ref{EqSDEPrelimit}). By Proposition \ref{PropLimitPathwiseUnique} in
conjunction with condition (L), pathwise uniqueness holds for
(\ref{EqSDELimitControl}). By
Proposition \ref{PropLimitWeakUnique}, it follows that weak uniqueness
holds for (\ref{EqSDELimitControl}); hence
hypothesis \textup{(H3)} is satisfied.

In order to verify hypothesis \textup{(H4)}, let $u^{N}\in\mathcal
{U}_{N}$, $N\in\mathbb{N}$, be such that
\[
\sup_{N\in\mathbb{N}} \frac{1}{N} \sum
_{i=1}^{N} \Mean\biggl[ \int_{0}^{T}\bigl|u_{i}^{N}(t)\bigr|^{2}\mrmd t
\biggr] <\infty.
\]
For $N\in\mathbb{N}$, let $\bar{\mu}^{N}$ be the empirical measure
of the solution to the system of equations (\ref
{EqSDEPrelimitControl}) under $u^{N}$. We have to show that $\{ \Prb
_{N}\circ(\bar{\mu}^{N})^{-1}\dvt N\in\mathbb{N}\}$ is tight in
$\mathcal{P}(\mathcal{P}(\mathcal{X}))$. Choose $\delta_{0}\in
(0,1\wedge
T]$, and define a function $G\dvtx\mathcal{P}(\mathcal{X}) \rightarrow
[0,\infty]$ by
\[
G(\theta)\doteq\int_{\mathcal{X}} \Bigl(\bigl|\phi(0)\bigr| + \sup
_{\delta
\in(0,\delta_{0}]} \delta^{-1/4}\cdot\sup_{t,s\in[0,T]:
|t-s|\leq\delta}
\bigl|\phi(t)-\phi(s)\bigr| \Bigr) \theta(\mrmdd \phi).
\]
Then $G$ is a tightness function, that is, $G$ is measurable and the
sublevel sets $\{\theta\dvtx G(\theta) \leq c\}$, $c\in[0,\infty)$, are
compact in $\mathcal{P}(\mathcal{X})$. This latter property is a consequence
of the Ascoli--Arzel{\`a} characterization of relatively compact sets in
$\mathcal{P}(\mathcal{X})$ (e.g., Theorem 2.4.9
in Karatzas and Shreve \cite{karatzasshreve91}, page 62), the Markov
inequality and Fatou's lemma.
We are going to show that $\sup_{N\in\mathbb{N}} \Mean_{N} [
G(\bar{\mu}^{N}) ] < \infty$, which implies that $\{
\Prb_{N}\circ(\bar{\mu}^{N})^{-1}\dvtx N\in\mathbb{N}\}$ is tight. By
construction, for $N\in\mathbb{N}$,
\begin{eqnarray*}
G\bigl(\bar{\mu}^{N}\bigr) &=& \frac{1}{N}\sum
_{i=1}^{N} \Bigl( \bigl\llvert\bar
{X}^{N}_{i}(0)\bigr\rrvert+ \sup_{\delta\in(0,\delta_{0}]}
\delta^{-1/4}\cdot\sup_{t,s\in[0,T]: |t-s|\leq\delta} \bigl\llvert
\bar
{X}^{N}_{i}(t) - \bar{X}^{N}_{i}(s)
\bigr\rrvert\Bigr)
\\
&=& |x_{0}| + \frac{1}{N}\sum_{i=1}^{N}
\sup_{\delta\in(0,\delta
_{0}]} \delta^{-1/4}\cdot\sup
_{t,s\in[0,T]: |t-s|\leq\delta} \biggl| \int_{s}^{t} b
\bigl(r,\bar{X}^{N}_{i},\bar{\mu}^{N}(r) \bigr)\mrmd r
\\
&&\hspace*{172pt}{} + \int_{s}^{t}\sigma\bigl(r,
\bar{X}^{N}_{i},\bar{\mu}^{N}(r)
\bigr)u^{N}_{i}(r)\mrmd r \\
&&\hspace*{172pt}{} + \int_{s}^{t}
\sigma\bigl(r,\bar{X}^{N}_{i},\bar{\mu}^{N}(r)
\bigr)\mrmd W^{N}_{i}(r) \biggr|.
\end{eqnarray*}
Thanks to condition (G), for every $i\in\{1,\ldots,N\}$,
\[
\sup_{\delta\in(0,\delta_{0}]} \delta^{-1/4}\cdot\sup
_{t,s\in
[0,T]: |t-s|\leq\delta} \biggl\llvert\int_{s}^{t}
b \bigl(r,\bar{X}^{N}_{i},\bar{\mu}^{N}(r)
\bigr)\mrmd r \biggr\rrvert\leq K \bigl(1 + \bigl\| X^{N}_{i}
\bigr\|_{\infty} \bigr)
\]
and, by H{\"o}lder's inequality,
\begin{eqnarray*}
&&\sup_{\delta\in(0,\delta_{0}]} \delta^{-1/4}\cdot\sup
_{t,s\in
[0,T]: |t-s|\leq\delta} \biggl\llvert\int_{s}^{t}
\sigma\bigl(r,\bar{X}^{N}_{i},\bar{\mu}^{N}(r)
\bigr)u^{N}_{i}(r)\mrmd r \biggr\rrvert
\\
&&\quad\leq\sqrt{T}K \cdot\sqrt{\int_{0}^{T}
\bigl|u^{N}_{i}(t)\bigr|^{2}\mrmd t} \leq\sqrt{T}K \biggl(
\frac{1}{2} + \frac{1}{2}\int_{0}^{T}
\bigl|u^{N}_{i}(t)\bigr|^{2}\mrmd t \biggr).
\end{eqnarray*}
The process $\int_{0}^{\cdot}\sigma(r,\bar{X}^{N}_{i},\bar{\mu
}^{N}(r))\mrmd W^{N}_{i}(r)$ is a vector of continuous local martingales
which, thanks to condition (G), satisfy the hypothesis of Lemma \ref
{LemmaModulusContinuity} with $C = K^{2}$. It follows that there exists
a finite constant $K_{T} > 0$ depending only on $K$ and $T$ such that
\begin{eqnarray*}
&&\Mean_{N} \bigl[G\bigl(\bar{\mu}^{N}\bigr) \bigr]
\\
&&\quad\leq|x_{0}| + K_{T} \Biggl(1 + \frac{1}{N}\sum
_{i=1}^{N}\Mean_{N} \bigl[
\bigl\|X^{N}_{i}\bigr\|_{\infty} \bigr] + \frac{1}{N}\sum
_{i=1}^{N} \Mean_{N} \biggl[\int
_{0}^{T} \bigl|u^{N}_{i}(t)\bigr|^{2}\mrmd t
\biggr] \Biggr).
\end{eqnarray*}
Since $\sup_{N\in\mathbb{N}} \frac{1}{N} \sum_{i=1}^{N} \Mean
[ \int_{0}^{T}|u_{i}^{N}(t)|^{2}\mrmd t ] <\infty$ by
hypothesis, it remains to check that, for some finite constant $\hat
{K}_{T} > 0$ depending only on $K$ and $T$,
\[
\frac{1}{N}\sum_{i=1}^{N}
\Mean_{N} \bigl[ \bigl\|X^{N}_{i}\bigr\|_{\infty}
\bigr] \leq\hat{K}_{T} \Biggl( 1 + \frac{1}{N}\sum
_{i=1}^{N} \Mean_{N} \biggl[\int
_{0}^{T} \bigl|u^{N}_{i}(t)\bigr|^{2}\mrmd t
\biggr] \Biggr).
\]
But this follows by standard arguments involving localization along the
stopping times $\tau^{N}_{M}\doteq\inf\{ t\in[0,T]\dvt\max_{i\in\{
1,\ldots,N\}} \sup_{s\leq t} |X^{N}_{i}(s)| \geq M\}$, $M\in\mathbb
{N}$, H{\"o}lder's inequality, Doob's maximal inequality, It{\^o}'s
isometry, condition (G), and Gronwall's lemma.

Hypothesis (H5) is again a consequence of the It{\^o} existence and
uniqueness theorem since under conditions (L) and (G), given any
$\theta\in\mathcal{P}(\mathcal{X})$, the mappings $(t,\phi)\mapsto
b(t,\phi,\theta(t))$, $(t,\phi)\mapsto\sigma(t,\phi,\theta(t))$
are predictable, locally Lipschitz continuous, and of sub-linear growth.
\end{pf}
\end{appendix}

\section*{Acknowledgements}

The author is grateful to Paolo Dai Pra for many stimulating questions
and discussions. The author thanks an anonymous Referee for her/his
helpful suggestions and critique. Partial financial support was
provided by the University of Padua through the Project ``Stochastic
Processes and Applications to Complex Systems'' (\mbox{CPDA123182}).



\printhistory

\end{document}